\documentclass[10pt, a4paper,reqno]{amsart}
\usepackage{hyperref}
\hypersetup{colorlinks=true, bookmarks=true, citecolor=blue}
\usepackage{amssymb,amstext,amsmath,amscd,amsthm,amsfonts,enumerate,graphicx,latexsym,color,cleveref}
\usepackage[all]{xy}
\usepackage{comment}

\allowdisplaybreaks

\newtheorem*{thma}{Theorem A}
\newtheorem*{thmb}{Theorem B}
\newtheorem*{thmc}{Theorem C}

\usepackage{aliascnt}
\usepackage{cleveref}

\newtheorem{thm}{Theorem}[section]

\newaliascnt{cor}{thm}
\newtheorem{cor}[cor]{Corollary}
\aliascntresetthe{cor}

\newaliascnt{lem}{thm}
\newtheorem{lem}[lem]{Lemma}
\aliascntresetthe{lem}

\newaliascnt{prop}{thm}
\newtheorem{prop}[prop]{Proposition}
\aliascntresetthe{prop}

\theoremstyle{definition}

\newaliascnt{conv}{thm}
\newtheorem{conv}[conv]{Convention}
\aliascntresetthe{conv}

\newaliascnt{setup}{thm}

\aliascntresetthe{setup}

\newaliascnt{dfn}{thm}
\newtheorem{dfn}[dfn]{Definition}
\aliascntresetthe{dfn}

\newaliascnt{rmk}{thm}
\newtheorem{rmk}[rmk]{Remark}
\aliascntresetthe{rmk}

\newaliascnt{ques}{thm}
\newtheorem{ques}[ques]{Question}
\aliascntresetthe{ques}

\newaliascnt{conj}{thm}

\aliascntresetthe{conj}

\newtheorem*{conj*}{Conjecture}

\newaliascnt{eg}{thm}
\newtheorem{eg}[eg]{Example}
\aliascntresetthe{eg}

\newaliascnt{nota}{thm}
\newtheorem{nota}[nota]{Notation}
\aliascntresetthe{nota}

\theoremstyle{remark}

\newtheorem*{ac}{Acknowledgments}

\newtheorem*{claim*}{Claim}

\crefname{thm}{theorem}{theorems}
\Crefname{thm}{Theorem}{Theorems}

\crefname{cor}{corollary}{corollaries}
\Crefname{cor}{Corollary}{Corollaries}

\crefname{lem}{lemma}{lemmas}
\Crefname{lem}{Lemma}{Lemmas}

\crefname{prop}{proposition}{propositions}
\Crefname{prop}{Proposition}{Propositions}

\crefname{conv}{convention}{conventions}
\Crefname{conv}{Convention}{Conventions}

\crefname{dfn}{definition}{definitions}
\Crefname{dfn}{Definition}{Definitions}

\crefname{rem}{remark}{remarks}
\Crefname{rem}{Remark}{Remarks}

\crefname{ques}{question}{questions}
\Crefname{ques}{Question}{Questions}

\crefname{conj}{conjecture}{conjectures}
\Crefname{conj}{Conjecture}{Conjectures}

\crefname{eg}{example}{examples}
\Crefname{eg}{Example}{Examples}

\crefname{nota}{notation}{notations}
\Crefname{nota}{Notation}{Notations}

\crefname{claim}{claim}{claims}
\Crefname{claim}{Claim}{Claims}

\numberwithin{equation}{thm}
\def\FF{\mathbb{F}}
\def\GG{\mathbb{G}}

\def\NN{\mathbb{N}}

\def\RR{\mathbb{R}}

\def\ZZ{\mathbb{Z}}

\def\cA{\mathcal{A}}

\def\cG{\mathcal{G}}
\def\cP{\mathcal{P}}
\def\cT{\mathcal{T}}
\def\cX{\mathcal{X}}

\def\fm{\mathfrak{m}}

\def\fp{\mathfrak{p}}
\def\fq{\mathfrak{q}}

\def\sH{\mathsf{H}}

\def\c{\mathsf{c}}

\def\ann{\operatorname{ann}}

\def\dim{\operatorname{\mathsf{dim}}}
\def\depth{\operatorname{\mathsf{depth}}}
\def\width{\operatorname{\mathsf{width}}}
\def\Ext{\operatorname{\mathsf{Ext}}}

\def\height{\operatorname{\mathsf{ht}}}
\def\Hom{\operatorname{\mathsf{Hom}}}

\def\mod{\operatorname{\mathsf{mod}}}
\def\Mod{\operatorname{\mathsf{Mod}}}
\def\Tor{\operatorname{\mathsf{Tor}}}
\def\Ker{\operatorname{\mathsf{Ker}}}
\def\Im{\operatorname{\mathsf{Im}}}
\def\Spec{\operatorname{\mathsf{Spec}}}
\def\Supp{\operatorname{\mathsf{Supp}}}
\def\Tot{\operatorname{\mathsf{Tot}}}

\def\D{\mathsf{D}}

\def\Db{\mathsf{D_b}}

\def\Dfb{\mathsf{D^f_b}}

\def\Dfp{\mathsf{D^f_+}}

\def\Pf{\mathsf{D^{pf}}}

\def\proj{\operatorname{\mathsf{proj}}}
\def\ltensor{\otimes^{\mathbb{L}}}
\def\rHom{\mathbb{R}\mathsf{Hom}}
\def\sup{\operatorname{\mathsf{sup}}}
\def\hsup{\operatorname{\mathsf{hsup}}}
\def\inf{\operatorname{\mathsf{inf}}}
\def\hinf{\operatorname{\mathsf{hinf}}}
\def\range{\operatorname{\mathsf{range}}}

\def\thick{\mathsf{thick}}

\def\cx{\mathsf{cx}}
\def\Cone{\mathsf{Cone}}
\def\coCone{\mathsf{coCone}}

\def\CIdim{\operatorname{\mathsf{CIdim}}}

\def\Gdim{\mathsf{Gdim}}
\def\pd{\mathsf{pd}}
\def\id{\operatorname{\mathsf{id}}}

\def\qpd{\operatorname{\mathsf{qpd}}}
\def\qpl{\operatorname{\mathsf{qpl}}}

\def\op{\mathsf{op}}

\newcommand{\mdot}{%
  \mathbin{\vcenter{\hbox{\scalebox{0.5}{$\bullet$}}}}%
}
\begin{document}

\title[Quasi-projective dimension for complexes via filtrations]{Quasi-projective dimension for complexes via filtrations}
\author{Hiroki Matsui}
\address{Department of Mathematics, Graduate School of Advanced Science and Engineering, Hiroshima University,
1-3-1 Kagamiyama, Higashi-Hiroshima, 739-8526, JAPAN}
\email{hrmatsui@hiroshima-u.ac.jp}
\urladdr{https://mthiroki.github.io/index.html}
\thanks{2020 {\em Mathematics Subject Classification.} 13D05, 13D09, 13D22, 13H10, 18G80}
\thanks{{\em Key words and phrases.} Quasi-projective dimension, quasi-projective length, derived depth formula,
intersection theorem, Serre's condition, complete intersection, complexity}
\thanks{The author was partly supported by JSPS Grant-in-Aid for Scientific Research 26K06766.}

\begin{abstract}
We extend quasi-projective dimension and quasi-projective length from finitely generated modules to homologically finite complexes by using finite filtrations in the derived category. 
Our definitions recover the original invariants of Gheibi--Jorgensen--Takahashi for modules and behave well under exact functors, which simplifies the proofs of several results. 
We establish the Auslander--Buchsbaum formula, the derived depth and width formulas, and the dependency formula for complexes of finite quasi-projective dimension. 
Extending a result of Gheibi--Jorgensen--Takahashi, we show that every homologically finite complex has finite quasi-projective dimension over a suitable complete intersection ring.
We also prove a new intersection theorem and a descent theorem for Serre's conditions. 
Finally, we obtain vanishing results for Tor, Ext, and Tate (co)homology, and deduce symmetry of eventual Ext vanishing over Gorenstein local rings.
\end{abstract}
\maketitle
\tableofcontents
\section{Introduction}

Throughout the paper, $(R, \fm ,k)$ denotes a commutative noetherian local ring.

Homological dimensions measure how far a module or a complex is from being projective and often reflect properties of the underlying ring.
For example, a commutative noetherian local ring is regular (resp.~Gorenstein, complete intersection) if and only if every finitely generated module has finite projective dimension (resp.~finite Gorenstein dimension, finite complete intersection dimension); see \cite{AB,AGP,Gbook,CFH,SW}.
Several homological dimensions, such as Gorenstein dimension, generalize projective dimension by enlarging the class of objects used in resolutions.

Quasi-projective dimension, introduced by Gheibi, Jorgensen, and Takahashi in \cite{GJT}, takes a different approach.
Instead of enlarging the class of resolving objects, it weakens the notion of a resolution.
They showed, among other results, that modules of finite quasi-projective dimension satisfy an Auslander--Buchsbaum formula and a depth formula, and that every finitely generated module over a quotient of a regular local ring by a regular sequence has finite quasi-projective dimension.
Since then, various aspects of quasi-projective dimension have been investigated by several authors \cite{FL,JPMMR}.
These studies suggest that modules of finite quasi-projective dimension behave in many respects like modules over complete intersection local rings, or more generally, like modules of finite complete intersection dimension.

The purpose of this paper is to extend quasi-projective dimension from modules to homologically finite complexes and to study its consequences.
Recall that a complex is {\em homologically finite} if its homology is bounded and finitely generated.
For complexes, however, knowing only the graded homology of a projective complex is generally not enough to recover all the information contained in the original complex.
Indeed, complexes with isomorphic graded homology need not be isomorphic in the derived category and may have different homological properties.
We therefore define {\em quasi-projective dimension} $\qpd_R(M)$ and {\em quasi-projective length} $\qpl_R(M)$ using filtrations in the derived category.
This point of view is in the spirit of resolutions of DG modules and of reducing complexity (cf. \cite{Ber1, Ber2, Ker}).
For finitely generated modules, our definitions of quasi-projective dimension and quasi-projective length agree with those of \cite{GJT}.
One advantage of the filtration formulation is that it behaves well under exact functors.
This allows us to prove several formulas and vanishing results by uniform arguments.

Our first main result gives the basic numerical properties of these invariants.
These results extend those of \cite{FL, GJT}; see also \cite{JPMMR}.

\begin{thm}[\Cref{AB}, \Cref{DDF}, \Cref{DWF}]
Let $M$ be a homologically finite complex and assume that $\qpd_R(M)<\infty$.
Then the following assertions hold.
\begin{enumerate}[\rm(1)]
\item
{\rm(Auslander--Buchsbaum formula)}
One has
\[
\qpd_R(M)=\depth(R)-\depth_R(M).
\]

\item
{\rm(Derived depth formula)}
For a homologically finite complex $N$ with $M\ltensor_RN$ homologically bounded, one has
\[
\depth_R(M\ltensor_RN)
=
\depth_R(M)+\depth_R(N)-\depth(R).
\]

\item
{\rm(Derived width formula)}
For a homologically finite complex $N$ with $\rHom_R(M,N)$ homologically bounded, one has
\[
\width_R(\rHom_R(M,N))
=
\depth_R(M)+\width_R(N)-\depth(R).
\]
\end{enumerate}
\end{thm}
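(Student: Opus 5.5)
We need to guess the definition. Presumably $\qpd_R(M)\le n$ means there is a finite filtration in the derived category, $0=F_{-1}\to F_0\to\cdots\to F_r$, with $F_r$ a complex that, up to shifts, has $M$ (or $M$ with some multiplicity data) as a piece of its homology. The successive cones $F_i/F_{i-1}$ would be perfect complexes of controlled amplitude, and the index shifts bounded by $n$. In GJT the module version reads: a finite projective complex $P$ with $\sH_i(P)\cong M^{\oplus a_i}$, where the relevant invariant is $\sup\{i:\sH_i(P)\neq0\}-\inf\{i:\sH_i(P)\ne0\}$, suitably normalized. I will work with a derived analogue: $M$ admits a filtration $0=X_0\to X_1\to\cdots\to X_r=P$ with $P$ perfect, and cones $\Sigma^{s_i}M^{\oplus a_i}$ with $a_i>0$. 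Then $\qpd$ is $\pd_R(P)-\inf$-type data, essentially $\sup_i s_i+\pd$ adjustments, and the ``length'' is $\max s_i-\min s_i$. The strategy is to apply exact functors to this filtration, compare numerical invariants of $P$ (which obey the classical perfect-complex formulas) with those of $M$, and then exploit the extremal pieces.

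For (1), I would apply $k\ltensor_R-$ and $\rHom_R(k,-)$ to the filtration. The key lemma concerns a triangle $X\to Y\to Z\to$. It gives $\depth Y\ge\min(\depth X,\depth Z)$ and the analogous inequalities, and these are equalities under a ``no cancellation'' condition. Since all pieces are shifts of $M$, one has $\depth_R(\Sigma^{s}M)=\depth_R(M)-s$. The extremal shift then forces $\depth_R(P)=\depth_R(M)-s_{\max}$; this uses the gap between extremal shifts to rule out cancellation in the long exact sequence of $\Ext(k,-)$. Similarly $\sup(k\ltensor_R P)=\sup(k\ltensor M)+s_{\max}$. The classical Auslander--Buchsbaum formula $\pd_R P=\depth R-\depth_R P$ then translates, after matching the normalization of $\qpd$, into $\qpd_R M=\depth R-\depth_R M$.

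For (2), apply $-\ltensor_R N$ to the filtration. This gives a filtration of $P\ltensor_RN$ with pieces $\Sigma^{s_i}(M\ltensor_RN)^{a_i}$. These pieces are homologically bounded by hypothesis, so $P\ltensor N$ is as well. For perfect $P$ the derived depth formula is known (Foxby/Iyengar): $\depth(P\ltensor N)=\depth P+\depth N-\depth R$. The same extremal-piece argument gives $\depth(P\ltensor N)=\depth(M\ltensor N)-s_{\max}$. Combining this with $\depth P=\depth M-s_{\max}$ proves (2).

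For (3), apply $\rHom_R(-,N)$. This contravariantly reverses the filtration and turns the shifts $s_i$ into $-s_i$. We use the perfect case $\width\rHom(P,N)=\depth P+\width N-\depth R$, which follows from $\rHom(P,N)\simeq P^*\ltensor N$ and the width formula for tensor products with perfect complexes. We also need $\width(\Sigma^{-s}X)=\width X-s$ so that the shifts match up.

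The main obstacle is the extremal-piece lemma: that depth (resp.\ width) of the filtered object is exactly governed by the piece with the largest shift, with no cancellation. I would first try to prove it by induction along the filtration, tracking $\inf\rHom(k,-)$. The worry is that two pieces with equal extremal shift might cancel in the connecting maps. If that happens, I would instead argue via Nakayama: a map of nonzero finite-length graded $k$-spaces in the lowest degree is controlled by the multiplicities $a_i$ and positivity, or else I would use the definition's requirement that shifts be strictly separated.
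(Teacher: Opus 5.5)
Your overall plan is the paper's: push a filtration through $\rHom_R(k,-)$, $-\ltensor_RN$ and $\rHom_R(-,N)$, use an extremal-piece statement to compare $M$ with the perfect end of the filtration, and then quote the formulas for perfect complexes. Your reduction of the perfect width formula to $P^*\ltensor_RN$ is fine.

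The gap is the step you flag yourself, and in your setup it is not merely unproved but false. Your filtration $0=X_0\to X_1\to\cdots\to X_r=P$ puts no constraint on how the shifts $s_i$ are ordered. So a quotient piece may carry a larger shift than a sub-piece, and the connecting morphism can then kill the extremal $\Ext$. For instance, let $\depth(R)\ge1$, $M=k\oplus R$ and $\phi=\id_k\oplus0\colon M\to M$. Since $\Cone(\phi)\cong R\oplus R[1]$, rotating the triangle of $\phi$ gives $M\to R\oplus R[1]\to M[1]\xrightarrow{-\phi[1]}M[1]$. This is a filtration of the non-zero perfect complex $P=R\oplus R[1]$, with shift $0$ on the sub-piece and $1$ on the quotient. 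Then $\depth_R(P)=\depth(R)-1$, whereas $\depth_R(M)-s_{\max}=-1$. Also $\pd_R(P)-s_{\max}=0\neq\depth(R)-\depth_R(M)$. So neither your lemma nor (1) holds for the invariant you sketched.

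Your diagnosis of the danger is also off. Cancellation comes not from equal shifts but from a quotient piece whose shift exceeds that of a sub-piece. Neither Nakayama nor requiring distinct shifts can help, since the shifts $0,1$ above are distinct.

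The missing idea is the indexing built into the paper's definition. A finite quasi-projective filtration is $F^{t+1}=0\to F^t\to\cdots\to F^s$ with $F^s$ non-zero perfect and $\Cone(F^{i+1}\to F^i)\cong M[i]^{\oplus a_i}$, $a_s,a_t>0$. The invariant is $\qpd_R(M)=\inf(\pd_R(F^s)-t)$. Thus sub-objects always carry the larger shifts, the deepest being $F^t\cong M[t]^{\oplus a_t}$.

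Put $n=\depth_R(M)-t$ and take $s\le i<t$. The terms flanking $\Ext^n_R(k,F^{i+1})\to\Ext^n_R(k,F^i)$ are $\Ext^{n-1}_R(k,M[i])=\Ext^{n-1+i}_R(k,M)$ and $\Ext^{n}_R(k,M[i])=\Ext^{n+i}_R(k,M)$, and both vanish. Descending induction then gives $\depth_R(F^s)=\depth_R(M)-t$ with no room for cancellation (\Cref{bound}, \Cref{dwlem}).

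The same argument applied to $\FF\ltensor_RN$ yields $\depth_R(F^s\ltensor_RN)=\depth_R(M\ltensor_RN)-t$. Dually, applied to the cofiltration $\rHom_R(\FF,N)$ with $\Tor^R_*(k,-)$, it yields $\width_R(\rHom_R(F^s,N))=\width_R(\rHom_R(M,N))-t$.

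For (1) you also need that $\pd_R(F^s)-t=\depth(R)-\depth_R(F^s)-t=\depth(R)-\depth_R(M)$ holds for \emph{every} such filtration. Hence the infimum defining $\qpd_R(M)$ is this common value. Parts (2) and (3) then follow exactly as you outline.
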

\noindent
The derived depth formula also yields a dependency formula; see \Cref{dep}.

As mentioned above, complexes of finite quasi-projective dimension are expected to behave similarly to complexes over complete intersections.
Indeed, over a quotient of a regular local ring with infinite residue field by a regular sequence, every homologically finite complex has finite quasi-projective dimension.

\begin{thm}[\Cref{cxbound}]
Suppose that $R=Q/(x_1,\ldots,x_c)$, where $Q$ is a regular local ring with infinite residue field and $x_1,\ldots,x_c$ is a $Q$-regular sequence.
Then every non-acyclic homollogically finite complex $M$ has finite quasi-projective dimension.
Moreover,
\[
\qpl_R(M)-\qpd_R(M)\leq\cx_R(M).
\]
\end{thm}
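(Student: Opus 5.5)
Since the definitions are not yet stated, I will use the ones the abstract indicates. The quasi-projective length $\qpl_R(M)$ is the minimal length $n$ of a finite filtration
\[
0=M_{-1}\to M_0\to M_1\to\cdots\to M_n=M
\]
in the derived category whose successive cones are quasi-isomorphic to shifted sums of perfect-like ``projective pieces'' (finite free complexes with zero differential, up to shift). The quasi-projective dimension $\qpd_R(M)$ is the corresponding homological spread, i.e.\ the range of shifts that are used.

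The plan is to induct on the codimension $c$, following the strategy of Gheibi--Jorgensen--Takahashi for modules. The filtration formalism lets us replace their module-theoretic syzygy arguments with exact triangles. The base case $c=0$ is direct: $R$ is regular, so $M$ is perfect. A minimal free resolution of $M$, filtered by its brutal truncations, then gives a filtration of length $\pd_R(M)-\inf(M)$ whose pieces are shifted free modules. Hence $\qpl_R(M)=\qpd_R(M)$ and $\cx_R(M)=0$, which gives the inequality.

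For the inductive step, set $S=Q/(x_1,\dots,x_{c-1})$, so that $R=S/(x)$ with $x$ a non-zerodivisor on $S$. Because the residue field is infinite, I will first change generators of the ideal so that $x=x_c$ is chosen generically. By Avramov's theory of support varieties, the complexity then drops: $\cx_S(M)\le \cx_R(M)-1$ when $\cx_R(M)\ge1$. If $\cx_R(M)=0$, then $M$ is perfect over $R$ and we argue as in the base case. Restricting scalars, $M$ is homologically finite over $S$, so by induction it has a filtration over $S$ of length at most $\qpd_S(M)+\cx_S(M)$. Applying the exact functor $R\ltensor_S-$ gives a filtration of $R\ltensor_S M$ over $R$ with the same pieces tensored down, which remain free over $R$. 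The key triangle is
\[
M[1]\to R\ltensor_SM\to M\to M[2],
\]
coming from the Koszul complex $S\xrightarrow{x}S$ together with the Eisenbud operator; this is the standard triangle for a hypersurface.

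The main obstacle is extracting a filtration of $M$ from one of $R\ltensor_SM$ using this triangle. A naive iteration would never terminate. What is needed instead is that the Eisenbud operator $\chi\colon M\to M[2]$ becomes nilpotent modulo the filtration. Equivalently, after finitely many composites of $\chi$ the map factors through the filtration pieces, so that the octahedral axiom splices the filtration of $R\ltensor_S M$ with a bounded number of copies of $M$ shifted.

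The point to establish is that each hypersurface step costs exactly one extra filtration step while preserving the spread up to the depth correction. For this I would use the Auslander--Buchsbaum formula (\Cref{AB}) over $S$ and over $R$: together with $\depth S=\depth R+1$ and the fact that $\depth_S M=\depth_R M$, it gives $\qpd_R(M)=\qpd_S(M)-1$, which is why the bound remains stable. Summing over the $c$ steps then yields $\qpl_R(M)-\qpd_R(M)\le\cx_R(M)$. I will also keep track of non-acyclicity, since it is what guarantees that the filtrations are nontrivial and that $\qpd$ is defined.
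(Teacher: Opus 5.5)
There is a genuine gap, and it starts with the definitions. In the paper, a finite quasi-projective filtration by $M$ is a filtration of a \emph{non-zero perfect complex} $X$ whose successive cones are $M[i]^{\oplus a_i}$ for $s\le i\le t$. One sets $\qpd_R(M)=\inf(\pd_R(X)-t)$ and $\qpl_R(M)=\inf(\pd_R(X)-s)$, so $\qpl_R(M)-\qpd_R(M)$ is governed by the range $t-s$ (\Cref{key}). You have this reversed. A filtration of $M$ itself with free (or perfect) cones exists only when $M$ is perfect. Under your reading the theorem would therefore be false, and your inductive hypothesis over $S$ fails as soon as $\cx_S(M)>0$. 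The ``main obstacle'' you describe, making $\chi$ nilpotent modulo a filtration so as to build $M$ from perfect pieces, has no solution when $\pd_R(M)=\infty$. With the correct definition that obstacle does not arise: if $\pd_S(M)<\infty$, the triangle $M[1]\to R\ltensor_SM\to M\to M[2]$ is already a quasi-projective filtration by $M$ of range $1$.

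The real difficulty in a hypersurface-by-hypersurface induction is one your proposal does not address. Base-changing an $S$-filtration of $X$ gives a filtration of $R\ltensor_SX$ with cones $(R\ltensor_SM)[i]^{\oplus a_i}$. Refining each cone by the triangle above produces two consecutive steps with the same shift, with cones $M[i]^{\oplus a_i}$ and $M[i]^{\oplus a_{i-1}}$; the definition allows only one step per shift, so they must be merged. The merged cone is an extension whose class is the image of the connecting maps $\phi\in\Ext^2_S(M,M)$ of your $S$-filtration under the change-of-rings map $\Ext^2_S(M,M)\to\Ext^1_R(M,M)$. This class vanishes only if $\phi$ comes from $\Ext^2_R(M,M)$, and an arbitrary filtration supplied by the induction hypothesis does not guarantee that. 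So the merged cone need not be of the form $M[i]^{\oplus b}$, and the claim that ``each hypersurface step costs one'' is unproved.

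The paper sidesteps this by handling the whole codimension at once. It first reduces via \cite[Theorem~5.9]{AGP} (the counterpart of your support-variety step) to $R=Q'/(x_1,\dots,x_c)$ with $c=\cx_R(M)$ and $\pd_{Q'}(M)<\infty$. It then takes a Cartan--Eilenberg resolution $P$ of $M$ over $Q'$ and filters $\Tot(P\otimes_{Q'}R)\simeq R\ltensor_{Q'}M$ by vertical good truncations. The vertical homology is $\Tor^{Q'}_q(R,M_p)\cong M_p^{\oplus\binom{c}{q}}$, naturally in $p$ via the Koszul complex, so the cones are exactly $M[q]^{\oplus\binom{c}{q}}$ for $0\le q\le c$. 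The splitting you would otherwise have to fight for is automatic. Your Auslander--Buchsbaum bookkeeping is then the right last step: $\pd_{Q'}(M)=\qpd_R(M)+c$ and $\qpl_R(M)\le\pd_R(R\ltensor_{Q'}M)\le\pd_{Q'}(M)$.
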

\noindent
Here, $\cx_R(M)$ denotes the {\em complexity} of $M$; see \cite{IFR,AGP}.
Thus the difference $\c_R(M) := \qpl_R(M)-\qpd_R(M)$ behaves, in several respects, like complexity.
The role played by $\c_R(M)$ in our results parallels that of complexity for complexes of finite complete intersection dimension.
Sharif and Yassemi proved an extension of the new intersection theorem for complexes of finite complete intersection dimension in \cite{SY}, while Celikbas and Piepmeyer \cite{CP} proved a descent theorem for Serre's conditions.
We obtain corresponding results for complexes of finite quasi-projective dimension, which are new even in the module case.

\begin{thm}[\Cref{NIT}, \Cref{Serre}]
Let $M,N$ be homologically finite complexes and assume that $M$ is non-acyclic and $\qpd_R(M)<\infty$.
Then the following assertions hold.
\begin{enumerate}[\rm(1)]
\item
{\rm(New intersection theorem)}
One has
\[
\dim_R(N)
\leq
\dim_R(M\ltensor_RN)
+\qpd_R(M)+\c_R(M).
\]

\item {\rm(Descent theorem for Serre's condition)}
Let $n\geq0$.
If $M\ltensor_RN$ is homologically bounded and satisfies Serre's condition
$(S_{n+\c_R(M)})$, then $N$ satisfies $(S_n)$.
\end{enumerate}
\end{thm}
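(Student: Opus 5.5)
We will prove both assertions by induction along the filtration that defines finite quasi-projective dimension. We are free to choose its precise shape, since the definitions appear only later. Our working model is this: $\qpd_R(M)<\infty$ means there is a finite filtration in the derived category
\[
0=M_0\to M_1\to\cdots\to M_\ell ,
\]
where $M$ is a direct summand of (or equal to) $M_\ell$. Each cone $\Cone(M_{i-1}\to M_i)$ is a shift of $F_i\ltensor_R L_i$, with $F_i$ a bounded complex of finite free modules, and $L_i$ carries information controlled by $\qpd$ and $\qpl$. In the module case these pieces correspond to the homologies $H_i(P)\simeq$ (free)$/I_i$ of a quasi-projective resolution $P$, and the number of nontrivial steps is $\qpl_R(M)$. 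The plan is to transport this filtration through the exact functor $-\ltensor_R N$ and then run inductive inequalities. The Auslander--Buchsbaum formula \Cref{AB} and the derived depth formula \Cref{DDF} are available. We also need the standard inequalities on $\dim$ and $\depth$ along exact triangles, namely $\dim(B)\le\max\{\dim A,\dim C\}$ and the depth lemma.

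For (1), apply $-\ltensor_RN$ to the filtration. This gives a filtration of $M\ltensor_RN$ whose subquotients are shifts of $F_i\ltensor_RN$. Since $F_i$ is perfect and non-acyclic, the ordinary new intersection theorem (Roberts, for complexes) together with the perfect-complex identity $\Supp(F_i\ltensor N)=\Supp F_i\cap\Supp N$ yields an inequality of the form
\[
\dim N\le \dim(F_i\ltensor N)+\text{(length of }F_i).
\]
The main step is to go the other way, bounding $\dim(F_i\ltensor N)$ above by $\dim(M\ltensor N)$ plus the number of filtration steps separating it from the extreme term. Rather than use supports directly, we will work prime by prime. Localize at a prime $\fp$ minimal in $\Supp(M\ltensor N)$ with $\dim R/\fp=\dim(M\ltensor N)$; the localized filtration persists. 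The filtration length contributes $\c_R(M)=\qpl-\qpd$, and the widths of the $F_i$ contribute $\qpd$ via \Cref{AB}. We expect this to mirror the Sharif--Yassemi argument for CI-dimension, where complexity enters through a length-$\cx$ chain of hypersurface steps. Here the chain of cones replaces the hypersurface steps, and each cone can lose at most one dimension. We expect this counting to be the main obstacle: we must ensure the homological shifts in the filtration are compatible with $\qpd$ so that the total loss is exactly $\qpd+\c$.

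For (2), we localize. Take $\fp\in\Supp N$. The filtration localizes, and the invariants satisfy $\qpd_{R_\fp}(M_\fp)\le\qpd$ and $\c_{R_\fp}(M_\fp)\le\c_R(M)$; this follows since localization is exact and preserves filtrations. If $M_\fp$ is acyclic there is nothing to check, by a support argument. Otherwise \Cref{DDF} over $R_\fp$ gives
\[
\depth N_\fp=\depth(M\ltensor N)_\fp-\depth M_\fp+\depth R_\fp=\depth(M\ltensor N)_\fp+\qpd(M_\fp),
\]
using \Cref{AB}. Next apply the hypothesis $(S_{n+\c})$ to $(M\ltensor N)_\fp$, which gives $\depth\ge\min\{n+\c,\dim(M\ltensor N)_\fp\}$ up to the normalization by $\inf$. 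Then bound $\dim (M\ltensor N)_\fp\ge\dim N_\fp-\qpd(M_\fp)-\c(M_\fp)$ by part (1). Combining these yields $\depth N_\fp\ge\min\{n,\dim N_\fp-\c\}$. This still has to be upgraded to $\min\{n,\dim N_\fp\}$. We handle the case $\dim N_\fp\le n+\c$ by choosing $\fp$ minimal counterexample and inducting on $\dim$, the same trick Celikbas--Piepmeyer use. That bookkeeping, including the homological-degree normalizations in the complex version of Serre's condition, is the second delicate point.
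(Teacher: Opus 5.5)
Your proposal has a genuine gap in each part.

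\textbf{Part (1).} Your working model of $\qpd$ is backwards. By \Cref{deffilt}, $\qpd_R(M)<\infty$ means that a \emph{non-zero perfect complex} $F^s$ has a filtration $0=F^{t+1}\to F^t\to\cdots\to F^s$ whose cones are $M[i]^{\oplus a_i}$ with $a_s,a_t>0$. Then $\qpd_R(M)=\inf(\pd_R(F^s)-t)$ and $\qpl_R(M)=\inf(\pd_R(F^s)-s)$. For modules, the homology of a quasi-projective resolution consists of copies of $M$, not of modules $(\text{free})/I_i$.

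Your version does not work. If $M\ltensor_RN$ were filtered by pieces $F_i\ltensor_RN$, the triangles would only give $\dim_R(M\ltensor_RN)\le\max_i\dim(\text{pieces})$, which is the wrong direction. Nothing supports ``each cone loses at most one dimension'', so the step you call the main obstacle is the whole content of (1) and is left open.

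With the correct model there is no loss at all:
\begin{itemize}
\item $\FF\ltensor_RN$ is a filtration of $F^s\ltensor_RN$ by $M\ltensor_RN$ with the same index interval $[s,t]$.
\item \Cref{proploc}(1) gives $\Supp_R(F^s\ltensor_RN)=\Supp_R(M\ltensor_RN)$.
\item \Cref{bound}(1)(i), applied after localizing at each prime, gives $\hinf((F^s\ltensor_RN)_\fp)=\hinf((M\ltensor_RN)_\fp)+s$. Hence $\dim_R(F^s\ltensor_RN)=\dim_R(M\ltensor_RN)-s$ exactly.
\end{itemize}
The new intersection theorem for the perfect complex $F^s$ then yields $\dim_R(N)\le\dim_R(M\ltensor_RN)+\pd_R(F^s)-s$. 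Finally choose $\FF$ with $\pd_R(F^s)-s=\qpl_R(M)=\qpd_R(M)+\c_R(M)$.

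\textbf{Part (2).} The claim that nothing needs checking when $M_\fp$ is acyclic is wrong. $(S_n)$ for $N$ is required at every $\fp\in\Supp_R(N)$. For $\fp\notin\Supp_R(M)$ one has $(M\ltensor_RN)_\fp\simeq0$, so the hypothesis gives no information at $\fp$. Such primes of positive height do occur: take $R=k[[x,y,z]]/(xy)$, $M=R/(x)$, $N=R$, $\fp=(y,z)$, where all hypotheses hold with $\c_R(M)=1$. These primes are the real difficulty.

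The paper handles them by taking $\fq$ minimal over $I+\fp$, where $\Supp_R(M)=V(I)$.
\begin{itemize}
\item If $\height\fq\le n+c$, it passes from $\fq$ down to $\fp$ using $\depth_{R_\fp}(N_\fp)+\hinf(N_\fp)\ge\depth_{R_\fq}(N_\fq)+\hinf(N_\fq)-\dim(R_\fq/\fp R_\fq)$.
\item Otherwise it applies part (1) over $R_\fq$ to the module $R_\fq/\fp R_\fq$, not to $N$. Since $M_\fq\ltensor_{R_\fq}R_\fq/\fp R_\fq$ is supported only at $\fq R_\fq$, its dimension is $-\hinf(M_\fq)$. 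Combining this with \Cref{AB}, \Cref{DDF} at $\fq$, and $\c_{R_\fq}(M_\fq)\le c$ gives the bound $n$.
\end{itemize}

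At primes of $\Supp_R(M)\cap\Supp_R(N)$, by contrast, neither part (1) nor an induction is needed. Serre's condition here is measured against $\height\fp$, not $\dim(M\ltensor_RN)_\fp$. Then \Cref{AB}, \Cref{DDF}, $\hinf(M_\fp\ltensor_{R_\fp}N_\fp)=\hinf(M_\fp)+\hinf(N_\fp)$, and $\qpd_{R_\fp}(M_\fp)\ge\hsup(M_\fp)\ge\hinf(M_\fp)$ (\Cref{pd-qpd}) give $\depth_{R_\fp}(N_\fp)+\hinf(N_\fp)\ge\min\{n+c,\height\fp\}$ directly. Your ``upgrade'' problem is an artifact of using the wrong form of $(S_n)$.

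Finally, $\c_{R_\fp}(M_\fp)\le\c_R(M)$ does not follow from $\qpd$ and $\qpl$ each decreasing under localization. It needs \Cref{key}(2): by \Cref{AB} every filtration computes $\qpd$, so $\c$ is the minimal range of a filtration. Localization at $\fp\in\Supp_R(M)$ preserves the range, with terms staying non-zero by \Cref{proploc}(1).
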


These results suggest a close relationship between $\c_R(M)$ and $\cx_R(M)$.
In particular, it is natural to ask whether
\[
\c_R(M)=\cx_R(M)
\]
whenever $M$ has finite complete intersection dimension.

Finally, we study vanishing of Tor, Ext, and Tate (co)homology.
Using quasi-projective dimension, we prove the following theorem, which is known for complexes of finite complete intersection dimension (cf. \cite[Theorem 4.1]{Jor}) and extends a result of Gheibi, Jorgensen, and Takahashi. 

\begin{thm}[\Cref{symm}]
Let $M,N$ be homologically finite complexes.
Assume that $R$ is Gorenstein and that either $M$ or $N$ has finite quasi-projective dimension.
Then
\[
\Ext_R^{\gg0}(M,N)=0
\quad\Longleftrightarrow\quad
\Ext_R^{\gg0}(N,M)=0.
\]
\end{thm}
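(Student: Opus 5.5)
We reduce to the case where $M$ has finite quasi-projective dimension by means of Gorenstein duality. Since $R$ is Gorenstein, $R$ is a dualizing complex, so $(-)^\dagger:=\rHom_R(-,R)$ is a duality on homologically finite complexes. It gives
\[
\Ext^i_R(M,N)\cong\Ext^i_R(N^\dagger,M^\dagger).
\]
One checks that $\qpd_R(M)<\infty$ implies $\qpd_R(M^\dagger)<\infty$. The filtrations defining $\qpd$ have subquotients that are shifts of finite direct sums of homology-free (projective) objects, and applying the exact functor $(-)^\dagger$ reverses the filtration and preserves this kind of subquotient, since $R^\dagger=R$. This is exactly the advantage of the filtration formulation stressed in the introduction. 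Hence the hypothesis "either $M$ or $N$" is symmetric, and we may assume $\qpd_R(M)<\infty$.

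Key step 1 is to show that $\Ext^{\gg0}_R(M,N)=0$ if and only if $\Tor_{\gg0}^R(M,N^\dagger)=0$, for $M$ with $\qpd_R(M)<\infty$. We have $\rHom_R(M,N)\simeq\rHom_R(M,R)\ltensor_R N$ only when $M$ is perfect, so we instead use the filtration of $M$. Write $M$ as built in finitely many steps from shifted free modules, where the shifts are arranged so that the induced long exact sequences are "eventually quasi-exact." By induction on the filtration length, eventual vanishing of $\Ext(M,N)$ is then equivalent to eventual vanishing for the subquotients combined through the connecting maps. Concretely, I would invoke the earlier vanishing results of the paper (the Tor/Ext vanishing theorems preceding \Cref{symm}), which presumably state: for $\qpd_R(M)<\infty$, $\Tor^R_{\gg0}(M,X)=0$ implies $M\ltensor_R X$ is bounded, and similarly $\Ext^{\gg0}_R(M,X)=0$ implies $\rHom_R(M,X)$ is bounded.

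Key step 2 carries out the symmetry. Assume $\Ext^{\gg0}_R(M,N)=0$, so that $\rHom_R(M,N)$ is homologically bounded. Apply the derived width formula (Theorem above, part (3)) together with local duality over the Gorenstein ring. This gives $\rHom_R(M,N)^\dagger\simeq M\ltensor_R N^\dagger$ up to boundedness, and the derived depth formula shows that $M\ltensor_R N^\dagger$ is bounded. Then
\[
\rHom_R(N,M)\simeq\rHom_R(M^\dagger,N^\dagger)\simeq(M\ltensor_R N^\dagger)^{\dagger\dagger}\text{-type identification},
\]
more precisely $\rHom_R(N,M)\simeq\rHom_R(N, M^{\dagger\dagger})\simeq (N\ltensor_R M^\dagger)^\dagger$. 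Boundedness of $M^\dagger\ltensor_R N$ is obtained from the previous step applied to $M^\dagger$, which also has finite $\qpd$. Since $(-)^\dagger$ preserves homological finiteness, $\Ext^{\gg0}_R(N,M)=0$. The converse direction is identical with the roles exchanged via duality.

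The main obstacle is the precise equivalence "$\Ext^{\gg0}_R(M,N)=0 \iff \Tor^R_{\gg0}(M^\dagger... ,N)=0$" for non-perfect $M$, that is, converting eventual Ext vanishing into Tor vanishing against the dual. I would first try to prove it by induction along the $\qpd$-filtration, using that each layer contributes homology in a single degree, so that the connecting homomorphisms eventually behave like multiplication maps that are forced to be isomorphisms in high degrees. Failing that, I would mimic Jorgensen's argument for CI-dimension (\cite[Theorem 4.1]{Jor}), replacing the quasi-deformation by the filtration and the complexity by $\c_R(M)$.
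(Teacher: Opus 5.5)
\paragraph{The gap: Key step 2 is circular.} Key step 2 carries the whole content of the theorem, and it does not go through.

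Your Key step 1 is automatic and uses nothing about $\qpd$. Over a Gorenstein ring, biduality and adjunction give $\rHom_R(M,N)\simeq\rHom_R(M\ltensor_R N^\dagger,R)$. Moreover $(-)^\dagger$ interchanges homologically bounded-below and bounded-above complexes with finitely generated homology, with $Z\simeq Z^{\dagger\dagger}$. Hence $\Ext^{\gg0}_R(M,N)=0$ if and only if $\Tor^R_{\gg0}(M,N^\dagger)=0$, for all $M,N\in\Dfb(R)$. The ``vanishing results'' you presume are tautologies: eventual vanishing of $\Tor^R_*(M,X)$ just says that $M\ltensor_R X$ is bounded.

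Consequently the theorem is equivalent to: $M\ltensor_R N^\dagger$ is bounded if and only if $M^\dagger\ltensor_R N$ is bounded. Nothing in Key step 2 proves this.
\begin{itemize}
\item The derived depth and width formulas presuppose boundedness and cannot produce it.
\item ``The previous step applied to $M^\dagger$'' yields boundedness of $M^\dagger\ltensor_R N$ only from $\Ext^{\gg0}_R(M^\dagger,N^\dagger)=0$. But that is $\Ext^{\gg0}_R(N,M)=0$, the conclusion you want.
\end{itemize}
Eventual Ext vanishing is not symmetric over general Gorenstein rings (Jorgensen--Sega), so the $\qpd$ hypothesis must be used essentially; in your argument it never is.

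Induction along the filtration does not help for ordinary $\Tor$ or $\Ext$ either. Over a hypersurface $R=Q/(f)$ with $Q$ regular, the proof of \Cref{cxbound} gives an exact triangle $M[1]\to M\ltensor_Q R\to M\to M[2]$ with perfect middle term. So $(M\ltensor_Q R)\ltensor_R k$ is bounded, while $M\ltensor_R k$ is unbounded unless $M$ is perfect. The connecting maps become isomorphisms in high degrees rather than forcing vanishing.

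You also describe the filtration backwards. It filters a nonzero perfect complex $F^s$ with subquotients $M[i]^{\oplus a_i}$; it does not build $M$ from shifted free modules, which would make $M$ perfect. Your claim $\qpd_R(M^\dagger)<\infty$ is nevertheless correct. Dualizing gives a cofiltration $G_i=(F^i)^\dagger$ of $(F^s)^\dagger$ by $M^\dagger$. By the octahedral axiom, the cocones of the maps $(F^s)^\dagger\to G_i$ then form a filtration of $(F^s)^\dagger$ by $M^\dagger[-s-t]$.

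\paragraph{The missing idea: Tate cohomology.} The paper passes to Tate cohomology, where finite quasi-projective dimension does give rigidity.
\begin{itemize}
\item Over a Gorenstein ring every object of $\Dfb(R)$ has finite Gorenstein dimension, and $\Ext^n_R(M,N)\cong\widehat{\Ext}^n_R(M,N)$ for $n\gg0$.
\item Tate cohomology is cohomological in each variable and vanishes as soon as either variable is perfect.
\item For such a functor, the long exact sequences along a filtration of a perfect $F^s$ by $M$ show (\Cref{bound}, \Cref{bound2}) that the extreme nonvanishing degree on $M$ differs by $s$ or $t$ from that on $F^s$, where the functor is zero. Hence one-sided vanishing on $M$ forces total vanishing (\Cref{lemst}).
\end{itemize}
Apply this to $\widehat{\Ext}^*_R(-,N)$ or to $\widehat{\Ext}^*_R(M,-)$, according to which of $M,N$ has finite $\qpd$; no duality reduction is needed. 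This gives $\Ext^{\gg0}_R(M,N)=0$ if and only if $\widehat{\Ext}^*_R(M,N)=0$, and likewise for $(N,M)$.

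What remains is that total vanishing of Tate cohomology is symmetric over a Gorenstein ring, which needs no $\qpd$ hypothesis. The truncation triangles $P_M\to M\to A[m]\to P_M[1]$ and $P_N\to N\to B[n]\to P_N[1]$, with $P_M,P_N$ perfect, reduce it to modules $A,B$, where it is \cite[Corollary 3.5(ii)]{Sad}. Your fallback of mimicking Jorgensen's argument would still need a substitute for exactly this Tate-cohomological input.
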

\noindent
These assertions extend the results of \cite{GJT} for finitely generated modules to homologically finite complexes.

Recently, Chen, Hu, and Yang \cite{CHY} introduced another extension of quasi-projective dimension to complexes by prescribing the graded homology of a projective complex.
Their invariant and ours agree for modules, but they differ for complexes.
In Example~\ref{eg:homology}, we give an example illustrating this difference and showing that the Auslander--Buchsbaum formula in \cite[Theorem~4.4]{CHY} does not hold as stated.

The paper is organized as follows.
Section~2 collects the basic facts on finite filtrations.
Section~3 introduces quasi-projective dimension and quasi-projective length for complexes and establishes several basic properties. 
Among them, we prove that our definitions agree with those of Gheibi, Jorgensen, and Takahashi for modules. 
Section~4 proves the Auslander--Buchsbaum, derived depth, dependency, and derived width formulas.
Section~5 studies complete intersections, complexity, the new intersection theorem, and Serre's conditions.
Section~6 is devoted to vanishing of Ext, Tor, and Tate (co)homology and to symmetry of Ext vanishing over Gorenstein rings.

\section{Preliminaries}

In this section, we recall several basic definitions and properties.
We also introduce the notion of filtration in a triangulated category that will be used throughout the paper.
For standard notions and results from homological algebra, we refer the reader to \cite{CFH,Wei}.

\subsection{Conventions and basic facts}

\begin{conv}\label{conv:general}
For a triangulated category $\cT$, we denote the $n$-th shift functor by $[n]$.
For a morphism $f\colon X\to Y$ in $\cT$, we write $\Cone(f)$ for a {\em cone} of $f$, so that there is an exact triangle
\[
X \xrightarrow{f} Y \to \Cone(f) \to X[1].
\]
We define the {\em cocone} of $f$ by $\coCone(f):=\Cone(f)[-1]$.
Thus, rotating the above triangle gives an exact triangle
\[
\coCone(f) \to X \xrightarrow{f} Y \to \Cone(f).
\]

Let $\cA$ be an abelian category with enough projective objects, and index all complexes homologically.
We write $\D(\cA)$ for the derived category of $\cA$, $\Db(\cA)$ for its full subcategory of homologically bounded complexes.
We use $\simeq$ to denote an isomorphism in the derived category.
For $X\in\D(\cA)$, define the {\em projective dimension} of $X$ by 
\[
\pd_\cA(X)
=
\inf\left\{
n \ \middle|\
\begin{array}{l}
P\simeq X \text{ in }\D(\cA)
\text{ for some bounded complex }P\text{ of projective objects},\\
P_i\cong 0 \text{ for all } i>n
\end{array}
\right\}.
\]
We write $\cP(\cA)$ for the full subcategory of $\D(\cA)$ consisting of complexes of finite projective dimension.

We write $\Mod R$ for the category of $R$-modules, $\mod R$ for its full subcategory of finitely generated modules, and set $\D(R) := \D(\Mod R)$.
A complex of $R$-modules is called {\em homologically finite} if it is homologically bounded and all its homology modules are finitely generated.
We write $\Dfb(R)$ for the full subcategory of $\D(R)$ consisting of homologically finite complexes, and set $\Pf(R) = \Dfb(R) \cap \cP(\Mod R)$.
An object of $\Pf(R)$ is called a {\em perfect complex}.
We note that there are triangle equivalences $\Dfb(R) \simeq \Db(\mod R)$ and $\Pf(R) \simeq \cP(\mod R) \simeq \mathsf{K^b}(\proj R)$.
Here, $\mathsf{K^b}(\proj R)$ is the homotopy category of bounded complexes of finitely generated projective $R$-modules.

For $M,N\in\Dfb(R)$ and $n \in \ZZ$ write 
\[
\Ext_R^n(M,N) := \sH_{-n}(\rHom_R(M,N)),\qquad
\Tor_n^R(M,N) :=\sH_n(M\ltensor_R N).
\]
\end{conv}

\begin{nota}\label{nota:invariants}
For a complex $X$, put
\[
\begin{aligned}
 \sup(X) &= \sup\{i\mid X_i\neq0\},&
 \inf(X) &= \inf\{i\mid X_i\neq0\},\\
 \hsup(X) &=\sup\{i\mid\sH_i(X)\neq0\},&
 \hinf(X) &=\inf\{i\mid\sH_i(X)\neq0\}.
\end{aligned}
\]
\end{nota}

For complexes over a local ring, we recall the following basic invariants and definitions.

\begin{dfn}\label{def:complex-invariants}
Let $X\in\D(R)$.
\begin{enumerate}[\rm(1)]
\item The {\em support} of $X$ is
\[
\Supp_R(X)=\{\fp\in\Spec R\mid X_\fp\not\simeq0\text{ in }\D(R_\fp)\}=\bigcup_{i\in\ZZ}\Supp_R\sH_i(X).
\]
\item The {\em dimension} $\dim_R(X)$, the {\em depth} $\depth_R(X)$, and the {\em width} $\width_R(X)$ of $X$ are defined by
\begin{align*}
\dim_R(X) &:=\sup\{\dim(R/\fp)-\hinf X_\fp\mid \fp\in\Supp_R(X)\}, \\
\depth_R(X) &:= -\hsup(\rHom_R(k,X)),	 \\
\width_R(X) &:= \hinf(k\ltensor_RX).
\end{align*}

\item
Let $n\geq0$ be an integer.
We say that $X$ satisfies {\em Serre's condition $(S_n)$} if
\[
\depth_{R_\fp}(X_\fp)+\hinf (X_\fp) \ge \min\{n,\operatorname{ht}\fp\}
\]
for all $\fp \in \Supp_R(X)$.
\end{enumerate}
\end{dfn}

\subsection{Filtrations}

The following definition plays a key role throughout this paper.

\begin{dfn}
Let $\cT$ be a triangulated 	category and let $M, X \in \cT$.
\begin{enumerate}[\rm(1)]
\item
A {\em finite filtration} of $X$ by $M$ in $\cT$ is a decreasing sequence
\[
\FF = (F^{t+1} \to F^{t} \to \cdots \to F^{s+1} \to F^s)
\]
in $\cT$ with $s \le t$ such that $F^{t+1} \cong 0$, $F^s \cong X$, and 
\[
\Cone(F^{i+1} \to F^i ) \cong M[i]^{\oplus a_i}\qquad(i=s,\ldots,t),
\]
for some non-negative integers $a_s,\ldots,a_t$ with $a_s,a_t > 0$.
The interval $[s,t]$ is called the {\em index interval} of $\FF$, and $\range(\FF) := t-s$ is called the {\em range} of $\FF$.

\item
A {\em finite cofiltration} of $X$ by $M$ in $\cT$ is an increasing sequence
\[
\GG = (G_{s} \to G_{s+1} \to \cdots \to G_{t} \to G_{t+1})
\]
in $\cT$ with $s \le t$ such that $G_{t+1} \cong 0$, $G_s \cong X$, and 
\[
\coCone(G_{i} \to G_{i+1}) \cong M[-i]^{\oplus a_i}\qquad(i=s,\ldots,t),
\]
for some non-negative integers $a_s,\ldots,a_t$ with $a_s,a_t > 0$.
The interval $[s,t]$ is called the {\em index interval} of $\GG$, and $\range(\GG) := t-s$ is called the {\em range} of $\GG$.
\end{enumerate}
\end{dfn}

\begin{rmk}
Equip $\cT^\op$ with its standard triangulated structure.
\begin{enumerate}[\rm(1)]
\item
A finite filtration (resp. a finite cofiltration) of $X$ by $M$ in $\cT$ is a finite cofiltration (resp. a finite filtration) of $X$ by $M$ in $\cT^\op$.
\item
Let $\Phi: \cT \to \cT'$ be an exact functor between triangulated categories.
If $\Phi$ is covariant, $\Phi$ sends a finite filtration (resp. a finite cofiltration) of $X$ by $M$ in $\cT$ to a finite filtration (resp. a finite cofiltration) of $\Phi(X)$ by $\Phi(M)$ in $\cT'$.
If $\Phi$ is contravariant, $\Phi$ sends a finite filtration (resp. a finite cofiltration) of $X$ by $M$ in $\cT$ to a finite cofiltration (resp. a finite filtration) of $\Phi(X)$ by $\Phi(M)$ in $\cT'$.
\end{enumerate}
\end{rmk}

The following lemma is elementary and will be used frequently.

\begin{lem}\label{thick}
Let $X,M \in \cT$ be non-zero objects and let $\cX \subseteq \cT$ be a thick subcategory.
Let $\FF = (F^{t+1} \to F^{t} \to \cdots \to F^{s+1} \to F^s)$ be a finite filtration of $X$ by $M$.
If $M \in \cX$, then $F^i \in \cX$ for all $i=s,\ldots,t$. 	
\end{lem}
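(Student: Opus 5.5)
We argue by descending induction on $i$, starting from the top of the filtration. Recall that a thick subcategory contains $0$ and is closed under isomorphisms, shifts, cones, and direct summands. In particular, it is closed under finite direct sums, since for $A,B\in\cX$ the triangle $A\to A\oplus B\to B\xrightarrow{0}A[1]$ exhibits $A\oplus B$ as an extension. Since $M\in\cX$, it follows that $M[i]^{\oplus a_i}\in\cX$ for every $i$ and every $a_i\ge 0$.

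For the base case, $F^{t+1}\cong 0$ lies in $\cX$. For the inductive step, suppose $F^{i+1}\in\cX$ for some $i$ with $s\le i\le t$. The definition of a finite filtration gives an exact triangle
\[
F^{i+1}\to F^{i}\to M[i]^{\oplus a_i}\to F^{i+1}[1].
\]
Rotating it yields $M[i-1]^{\oplus a_i}\to F^{i+1}\to F^{i}\to M[i]^{\oplus a_i}$. Thus $F^i$ is, up to isomorphism, the cone of a morphism between two objects of $\cX$. Since $\cX$ is closed under cones (equivalently, under extensions) and isomorphisms, we get $F^i\in\cX$. Descending from $i=t$ to $i=s$, we obtain $F^i\in\cX$ for all $i=s,\ldots,t$; in particular $X\cong F^s\in\cX$.

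No step poses a real obstacle. The only point needing care is that the filtration is given up to isomorphism, so we use that thick subcategories are replete, i.e. closed under isomorphism. The non-zero hypotheses on $X$ and $M$ are not needed for this argument.
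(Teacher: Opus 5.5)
Your proof is correct and follows essentially the same argument as the paper: descending induction on $i$ starting from $F^{t+1}\cong 0$, using the triangles $F^{i+1}\to F^i\to M[i]^{\oplus a_i}\to F^{i+1}[1]$ together with closure of $\cX$ under shifts, finite sums, cones and isomorphisms. You also correctly note that the non-zero hypotheses on $X$ and $M$ play no role.
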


\begin{proof}
There are exact triangles $F^{i+1} \to F^i \to M[i]^{\oplus a_i} \to F^{i+1}[1]$.
Since $F^{t+1} \cong 0 \in \cX$, backward induction on $i$ shows $F^i \in \cX$ for each $i$. 
\end{proof}

We next recall the notions of homological and cohomological functors in order to describe the behavior of homological bounds under finite filtrations and cofiltrations.

\begin{dfn}
Let $\cT$ be a triangulated category and let $\cA$ be an abelian category.
\begin{enumerate}[\rm(1)]
\item
A {\em homological functor}	$H_* = (H_n)_{n \in \ZZ}$ from $\cT$ to $\cA$ is a sequence of additive functors $H_n: \cT \to \cA$ satisfying the following conditions:
\begin{itemize}
\item
There is a natural isomorphism $H_{n}(M[i]) \cong H_{n-i}(M)$ for $M \in \cT$.
\item
For an exact triangle $L \xrightarrow{f} M \xrightarrow{g} N \xrightarrow{h} L[1]$, the induced sequence
\[
\cdots \to H_{n+1}(N) \xrightarrow{H_{n+1}(h)} H_n(L) \xrightarrow{H_{n}(f)} H_n(M) \xrightarrow{H_{n}(g)}  H_n(N) \xrightarrow{H_{n}(h)} H_{n-1}(L) \to \cdots 
\]
is exact.	
\end{itemize}

\item
A {\em cohomological functor} $H^* = (H^n)_{n \in \ZZ}$ from $\cT$ to $\cA$ is a sequence of additive functors $H^n: \cT \to \cA$ satisfying the following conditions:
\begin{itemize}
\item
There is a natural isomorphism $H^{n}(M[i]) \cong H^{n+i}(M)$ for $M \in \cT$.
\item
For an exact triangle $L \xrightarrow{f} M \xrightarrow{g} N \xrightarrow{h} L[1]$, the induced sequence
\[
\cdots \to H^{n-1}(N) \xrightarrow{H^{n-1}(h)} H^n(L) \xrightarrow{H^{n}(f)} H^n(M) \xrightarrow{H^{n}(g)}  H^n(N) \xrightarrow{H^{n}(h)} H^{n+1}(L) \to \cdots 
\]
is exact.	
\end{itemize}
\end{enumerate}
For $M\in\cT$, we use the notation
\[
\begin{aligned}
\inf (H_*(M)) &:=\inf\{n\in\ZZ\mid H_n(M)\not\cong0\},&
\sup (H_*(M)) &:=\sup\{n\in\ZZ\mid H_n(M)\not\cong0\},\\
\inf (H^*(M)) &:=\inf\{n\in\ZZ\mid H^n(M)\not\cong0\},&
\sup (H^*(M)) &:=\sup\{n\in\ZZ\mid H^n(M)\not\cong0\}.
\end{aligned}
\]
\end{dfn}

\begin{rmk}
Let $\cA$ be an abelian category.
By the basic properties of derived categories, the homology functors $\sH_n: \D(\cA) \to \cA$ define a homological functor $\sH_* = (\sH_n)_{n \in \ZZ}$.
Thus $\hinf(M) = \inf(\sH_*(M))$ and $\hsup(M) = \sup(\sH_*(M))$ hold.
\end{rmk}

\begin{lem}\label{bound}
Let $\FF = (F^{t+1} \to F^{t} \to \cdots \to F^{s+1} \to F^s)$ be a finite filtration of $X$ by $M$ in $\cT$.
\begin{enumerate}[\rm(1)]
\item	
Let $H_*$ be a homological functor from $\cT$ to $\cA$.
Then the following statements hold.
\begin{enumerate}[\rm(i)]
\item
If $\inf(H_*(M))> -\infty$, then $\inf(H_*(X)) = \inf(H_*(M)) + s$
\item
If $\sup(H_*(M))< \infty$, then $\sup(H_*(X)) = \sup(H_*(M)) + t$
\end{enumerate}

\item	
Let $H^*$ be a cohomological functor from $\cT$ to $\cA$.
Then the following statements hold.
\begin{enumerate}[\rm(i)]
\item
If $\inf(H^*(M))> -\infty$, then $\inf(H^*(X)) = \inf(H^*(M)) - t$
\item
If $\sup(H^*(M))< \infty$, then $\sup(H^*(X)) = \sup(H^*(M)) - s$
\end{enumerate}
\end{enumerate}
\end{lem}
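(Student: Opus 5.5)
We argue by descending induction along the filtration, using the exact triangles
\[
F^{i+1}\to F^i\to M[i]^{\oplus a_i}\to F^{i+1}[1]\qquad (i=s,\ldots,t)
\]
and the long exact sequences of $H_*$. Two identities are used throughout. First, $H_n(M[i]^{\oplus a})\cong H_{n-i}(M)^{\oplus a}$, since $H_n$ is additive and commutes with shifts up to the stated index change. Second, for $a>0$ the module $H_{n-i}(M)^{\oplus a}$ is nonzero exactly when $H_{n-i}(M)$ is. If $M$ has no nonzero homology at all, then every $F^i$ has none either (by the same induction), and all statements hold with the convention $\inf\emptyset=\infty$, $\sup\emptyset=-\infty$. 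So we may assume $m:=\inf(H_*(M))$, resp.\ $\sigma:=\sup(H_*(M))$, is finite.

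\emph{(1)(i).} For $s\le i\le t$ let $j(i)=\min\{j\ge i\mid a_j>0\}$; this exists because $a_t>0$. We show by descending induction on $i$ that $\inf(H_*(F^i))=m+j(i)$; the case $i=s$ then gives the claim, since $a_s>0$ forces $j(s)=s$.
\begin{itemize}
\item If $a_i=0$, the triangle gives $F^i\cong F^{i+1}$, and $j(i)=j(i+1)$.
\item If $a_i>0$, the induction hypothesis (or $F^{t+1}\cong0$) gives $H_n(F^{i+1})=0$ for all $n<m+i+1$. In the exact sequence
\[
H_n(F^{i+1})\to H_n(F^i)\to H_{n-i}(M)^{\oplus a_i}\to H_{n-1}(F^{i+1}),
\]
both outer terms vanish for $n\le m+i$. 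Hence $H_n(F^i)\cong H_{n-i}(M)^{\oplus a_i}$ for $n\le m+i$, which is zero for $n<m+i$ and nonzero for $n=m+i$.
\end{itemize}

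\emph{(1)(ii).} We show $\sup(H_*(F^i))=\sigma+t$ for every $s\le i\le t$.
\begin{itemize}
\item For the base case $i=t$, note $F^t\cong M[t]^{\oplus a_t}$ with $a_t>0$.
\item For $i<t$, take $n\ge\sigma+t$; then $n>\sigma+i$. In the exact sequence
\[
H_{n+1-i}(M)^{\oplus a_i}\to H_n(F^{i+1})\to H_n(F^i)\to H_{n-i}(M)^{\oplus a_i},
\]
both outer terms vanish, so $H_n(F^i)\cong H_n(F^{i+1})$ for all $n\ge\sigma+t$. By induction this is nonzero at $n=\sigma+t$ and zero for $n>\sigma+t$.
\end{itemize}

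\emph{(2).} Given a cohomological functor $H^*$, set $H_n:=H^{-n}$. This is a homological functor: $H_n(M[i])=H^{-n}(M[i])\cong H^{i-n}(M)=H_{n-i}(M)$, and the exactness condition is the same sequence read with reindexed degrees. Moreover $\inf(H^*(-))=-\sup(H_*(-))$ and $\sup(H^*(-))=-\inf(H_*(-))$. Applying (1)(ii) gives $\inf(H^*(X))=\inf(H^*(M))-t$, and applying (1)(i) gives $\sup(H^*(X))=\sup(H^*(M))-s$.

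The only delicate point is (1)(i) when some intermediate $a_i$ vanish. This is why the induction tracks $j(i)$ rather than $i$, and why the hypothesis $a_s>0$ is needed at the end (and $a_t>0$ for the $\sup$).
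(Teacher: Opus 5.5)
Your proof is correct and follows essentially the same route as the paper: descending induction along the triangles $F^{i+1}\to F^i\to M[i]^{\oplus a_i}\to F^{i+1}[1]$ using the long exact sequence of $H_*$, with (2) deduced from (1) via $H_n:=H^{-n}$. The only difference is cosmetic. In (1)(i) the paper proves just the lower bound $\inf(H_*(F^i))\ge \inf(H_*(M))+i$, which needs no case split on whether $a_i$ vanishes, and then computes $H_{u+s}(F^s)\cong H_u(M)^{\oplus a_s}$ at the last step. So your bookkeeping with $j(i)$ is valid but not actually needed.
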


\begin{proof}
If $H_*(M) \cong 0$, all the assertions are immediate by \Cref{thick}.
Thus, we may assume $H_*(M) \not\cong 0$.

(1)(i)
Put $u = \inf(H_*(M))$.
From the exact triangle $F^{i+1} \to F^{i} \to M[i]^{\oplus a_i} \to F^{i+1}[1]$	, there is an exact sequence
\begin{align}\label{eq:bound}
\cdots \to H_n(F^{i+1}) \to H_n(F^{i}) \to H_{n-i}(M)^{\oplus a_i} \to H_{n-1}(F^{i+1}) \to \cdots. 
\end{align}
This exact sequence yields
\[
\inf(H_*(F^i)) \ge \min\{\inf(H_*(F^{i+1})), \inf(H_{*-i}(M)^{\oplus a_i})\} \ge \min\{\inf(H_*(F^{i+1})), u+i\}.
\]
Moreover, as $F^{t} \cong M[t]^{\oplus a_t}$ and $a_t > 0$, we have $\inf(H_*(F^t)) = u+t$.
By descending induction, we obtain $\inf(H_*(F^i)) \ge u+i$ and in particular $\inf(H_*(F^s)) \ge u+s$.
To show the equality, it suffices to prove $H_{u+s}(F^s) \not\cong 0$.
Since $\inf(H_{*}(F^{s+1})) \ge u+s+1 > u+s$, there is an isomorphism $H_{u+s}(F^s) \cong H_{u}(M)^{\oplus a_s}$.
The right-hand side is non-zero because $a_s > 0$.
Hence, we get $H_{u+s}(F^s) \not\cong 0$ and thus, we conclude $\inf(H_*(X)) = \inf(H_*(F^s)) = u+s$.

(ii)
Put $v = \sup(H_*(M))$.
From the exact sequence (\ref{eq:bound}), one has 
\[
\sup(H_*(F^i)) \le \max\{\sup(H_*(F^{i+1})), \sup(H_{*-i}(M)^{\oplus a_i})\} \le \max\{\sup(H_*(F^{i+1})), v+i\}.
\]
As above, $\sup(H_*(F^t)) = v+t$.
By descending induction, we obtain $\sup(H_*(F^i)) \le v+t$ and in particular $\sup(H_*(F^s)) \le v+t$.
Thus, it remains to show $H_{v+t}(F^s) \not\cong 0$.
Since $\sup(H_{*-i}(M)) = v+i < v+t$ for $i<t$, we have isomorphisms
\[
H_{v+t}(F^s) \cong H_{v+t}(F^{s+1}) \cong \cdots \cong H_{v+t}(F^{t}) \cong H_v(M)^{\oplus a_t}.
\] 
The rightmost term is non-zero because $a_t > 0$, so $H_{v+t}(F^s) \not\cong 0$.
This shows the desired equality $\sup(H_*(X)) = \sup(H_*(F^s)) = v + t$.

(2)
Setting $H_n := H^{-n}$, $H_*$ is a homological functor.
Thus, the result follows from (1):
\begin{align*}
\inf(H^*(X)) &= -\sup(H_*(X)) = - (\sup(H_*(M)) +t) = \inf(H^*(M)) -t \\
\sup(H^*(X)) &= - \inf(H_*(X)) = - (\inf(H_*(M)) +s) = \sup(H^*(M)) -s.
\end{align*}
\end{proof}

Since finite cofiltrations in $\cT$ are finite filtrations in $\cT^\op$, we have the following dual statement.

\begin{lem}\label{bound2}
Let $\GG = (G_{s} \to G_{s+1} \to \cdots \to G_{t} \to G_{t+1})$ be a finite cofiltration of $X$ by $M$ in $\cT$.
\begin{enumerate}[\rm(1)]
\item	
Let $H_*$ be a homological functor from $\cT$ to $\cA$.
Then the following statements hold.
\begin{enumerate}[\rm(i)]
\item
If $\inf(H_*(M))> -\infty$, then $\inf(H_*(X)) = \inf(H_*(M)) - t$
\item
If $\sup(H_*(M))< \infty$, then $\sup(H_*(X)) = \sup(H_*(M)) - s$
\end{enumerate}

\item	
Let $H^*$ be a cohomological functor from $\cT$ to $\cA$.
Then the following statements hold.
\begin{enumerate}[\rm(i)]
\item
If $\inf(H^*(M))> -\infty$, then $\inf(H^*(X)) = \inf(H^*(M)) + s$
\item
If $\sup(H^*(M))< \infty$, then $\sup(H^*(X)) = \sup(H^*(M)) + t$
\end{enumerate}
\end{enumerate}
\end{lem}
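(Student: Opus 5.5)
The plan is to deduce \Cref{bound2} from \Cref{bound} by passing to the opposite category, as the remark before the statement suggests.

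First, view $\GG$ in $\cT^\op$. By the earlier remark, $\GG$ is then a finite filtration of $X$ by $M$ in $\cT^\op$. The remaining task is to pin down its index interval. In $\cT^\op$ the arrows reverse, giving the sequence $G_{t+1}\to G_t\to\cdots\to G_s$. The shift functor of $\cT^\op$ is $[-1]$ of $\cT$, and cocones in $\cT$ correspond to cones in $\cT^\op$. Hence the condition $\coCone(G_i\to G_{i+1})\cong M[-i]^{\oplus a_i}$ in $\cT$ reads, in $\cT^\op$, as $\Cone(G_{i+1}\to G_i)\cong M\langle i\rangle^{\oplus a_i}$, where $\langle\ \rangle$ denotes the shift of $\cT^\op$. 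So, setting $F^i:=G_i$, we obtain a finite filtration in $\cT^\op$ with the same index interval $[s,t]$ and the same multiplicities.

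Second, transport the functors.
\begin{itemize}
\item A homological functor $H_*$ on $\cT$ becomes a cohomological functor on $\cT^\op$ via $H'^n:=H_n$, viewed as functors into $\cA^\op$ or, equivalently, as contravariant functors into $\cA$. The shift rule is $H'^n(M\langle i\rangle)=H_n(M[-i])\cong H_{n+i}(M)=H'^{n+i}(M)$, which is the cohomological rule. Exactness of the long sequences also transfers, since triangles in $\cT^\op$ are triangles of $\cT$ reversed.
\item Likewise, a cohomological functor $H^*$ on $\cT$ becomes a homological functor on $\cT^\op$ via $H'_n:=H^n$.
\end{itemize}
Vanishing of objects is unaffected by passing to $\cA^\op$, so all inf and sup values are preserved.

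Third, apply \Cref{bound} in $\cT^\op$. Part (2) of \Cref{bound}, applied to $H'^*$, gives $\inf(H_*(X))=\inf(H_*(M))-t$ and $\sup(H_*(X))=\sup(H_*(M))-s$, which is (1). Part (1) of \Cref{bound}, applied to $H'_*$, gives $\inf(H^*(X))=\inf(H^*(M))+s$ and $\sup(H^*(X))=\sup(H^*(M))+t$, which is (2).

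The main obstacle is the bookkeeping in the first two steps: the sign of the shift in $\cT^\op$, and checking that the index interval stays $[s,t]$ rather than becoming $[-t,-s]$. I would verify these on the case $s=t$, where $X\cong M[-s]^{\oplus a_s}$ and the formulas can be read off directly. If the opposite-category bookkeeping becomes murky, the fallback is to redo the descending-induction proof of \Cref{bound} directly, using the triangles $M[-i]^{\oplus a_i}\to G_i\to G_{i+1}\to M[-i+1]^{\oplus a_i}$ and ascending induction from $G_{t+1}\cong0$.
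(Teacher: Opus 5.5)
Your proposal is correct and follows the paper's route: the paper deduces this lemma in one line, observing that a finite cofiltration in $\cT$ is a finite filtration in $\cT^\op$ and then applying \Cref{bound}. Your bookkeeping checks out. The shift of $\cT^\op$ is $[-1]$, so the index interval stays $[s,t]$, and homological and cohomological functors swap roles via $\cA^\op$; this is exactly what the paper's one-line remark leaves implicit.
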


Applying Lemmas~\ref{bound} and \ref{bound2} to the cohomological functor $\Ext^*_R(k,-)$ and the homological functor $\Tor_*^R(k,-)$, respectively, we obtain the following proposition.

\begin{prop}\label{dwlem}
\begin{enumerate}[\rm(1)]
\item
Let $M, X \in \Dfb(R)$ be non-zero objects.
Let $\FF$ be a finite filtration of $X$ by $M$ with index interval $[s,t]$.
Then $\depth_R(X) = \depth_R(M) - t$.

\item
Let $M, X \in \Dfb(R)$ be non-zero objects.
Let $\GG$ be a finite cofiltration of $X$ by $M$ with index interval $[s,t]$.
Then $\width_R(X) = \width_R(M) - t$.
\end{enumerate}	
\end{prop}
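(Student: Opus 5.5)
The plan is to apply \Cref{bound} and \Cref{bound2} with the functors named just before the statement, and to translate the resulting identities through \Cref{def:complex-invariants}.

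For (1), I will use the cohomological functor $H^n := \Ext_R^n(k,-) = \sH_{-n}(\rHom_R(k,-))$ on $\D(R)$. It is cohomological because $\rHom_R(k,-)$ is exact and sends $[i]$ to $[i]$, so $H^n(M[i]) \cong \sH_{-n}(\rHom_R(k,M)[i]) = \sH_{-n-i}(\rHom_R(k,M)) = H^{n+i}(M)$. By definition, $\depth_R(M) = -\hsup(\rHom_R(k,M)) = \inf(H^*(M))$. To apply \Cref{bound}(2)(i), I need $\inf(H^*(M)) > -\infty$. For a non-zero homologically finite $M$ this is the standard fact that the depth of such a complex is finite and at least $-\hsup(M)$; in particular $\Ext^n_R(k,M) = 0$ for $n < -\hsup(M)$, since $\rHom_R(k,M)$ has homology concentrated in degrees $\le \hsup(M)$. \Cref{bound}(2)(i) then gives $\inf(H^*(X)) = \inf(H^*(M)) - t$, which is $\depth_R(X) = \depth_R(M) - t$.

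For (2), I will use the homological functor $H_n := \Tor_n^R(k,-) = \sH_n(k\ltensor_R -)$, which is homological for the same reasons. Here $\width_R(M) = \inf(H_*(M))$. The hypothesis $\inf(H_*(M)) > -\infty$ holds because $k\ltensor_R M$ has homology vanishing below $\hinf(M)$: replace $M$ by a bounded-below complex of projectives starting in degree $\hinf(M)$. Then \Cref{bound2}(1)(i) gives $\width_R(X) = \width_R(M) - t$.

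The only point needing care is finiteness of depth and width. Nakayama's lemma gives $\width_R(M) = \hinf(M) < \infty$ and $\depth_R(M) \le \dim_R(M) < \infty$ for non-zero $M \in \Dfb(R)$. However, the lemmas as stated only require the lower bounds, so the finiteness is not actually needed. I expect no real obstacle; the main work is keeping the sign conventions between $\sH_{-n}$ and $\Ext^n$ straight.
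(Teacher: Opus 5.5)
Your proposal is correct and is essentially the paper's argument: the paper obtains this proposition in one line by applying Lemma~\ref{bound}(2)(i) to the cohomological functor $\Ext_R^*(k,-)$ and Lemma~\ref{bound2}(1)(i) to the homological functor $\Tor^R_*(k,-)$, exactly as you do. Your checks of the shift conventions and of the lower bounds $\depth_R(M)\ge -\hsup(M)$ and $\width_R(M)\ge \hinf(M)$ supply the details the paper leaves implicit.
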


\section{Quasi-projective dimension}
 
In this section, we introduce quasi-projective dimension as well as quasi-projective length for complexes and study their basic properties.


We first recall the original definitions of quasi-projective dimension and quasi-projective length for objects of an abelian category.

\begin{dfn}[{\cite[Definitions~3.1 and 5.1]{GJT}}]\label{defmod}
Let $\cA$ be an abelian category with enough projective objects, and let $M$ be a non-zero object of $\cA$.
\begin{enumerate}[\rm(1)]
\item 
A {\em finite quasi-projective resolution} of $M$ is a bounded complex $P$ of projective objects such that $\sH_i(P)\cong M^{\oplus a_i}$ for non-negative integers $a_i$, not all zero.

\item 
The {\em quasi-projective dimension} $\qpd_\cA(M)$ and the {\em quasi-projective length} $\qpl_\cA(M)$ of $M$ are defined by
\begin{align*}
\qpd_\cA(M)&:=\inf\{\sup(P) - \hsup(P) \mid P\text{ is a finite quasi-projective resolution of }M\},\\
\qpl_\cA(M)&:=\inf\{\sup(P) - \inf(P) \mid P\text{ is a finite quasi-projective resolution of }M\}.
\end{align*}
\end{enumerate}
By convention, we set $\qpd_\cA(0)=\qpl_\cA(0)=-\infty$.
\end{dfn}

\begin{rmk}\label{rmk:qpr-pd}
If a finite quasi-projective resolution $P$ of $M$ realizes $\qpd_\cA(M)$, that is $\qpd_\cA(M) = \sup(P) - \hsup(P)$, then $\pd_\cA(P)=\sup(P)$.
Indeed, otherwise one may replace $P$ by a projective resolution $Q$ with $\sup(Q) = \pd_\cA(P)<\sup(P)$.
Since $P$ and $Q$ are quasi-isomorphic, $Q$ is again a finite quasi-projective resolution of $M$ such that $\hsup(Q) = \hsup(P)$.
Therefore, $\qpd_\cA(M) \le \sup(Q) - \hsup(Q) < \sup(P) - \hsup(P) = \qpd_\cA(M)$, a contradiction.
\end{rmk}


The preceding definition uses only the homology of a complex of projective objects. 
For a general complex $M$, however, its homology does not determine $M$ in the derived category. 
To keep track of the additional information contained in $M$, we use filtrations by $M$.

\begin{dfn}\label{deffilt}
Let $\cA$ be an abelian category with enough projective objects, and let $M\in\Db(\cA)$ be a non-zero object.
\begin{enumerate}[\rm(1)]
\item 
A {\em finite quasi-projective filtration} by $M$ is defined to be a finite filtration of a non-zero object of $\cP(\cA)$ by $M$.

\item 
The {\em quasi-projective dimension} $\qpd_\cA(M)$ and the {\em quasi-projective length} $\qpl_\cA(M)$ of $M$ are defined by 
\begin{align*}
\qpd_\cA(M) &:= \inf\{\pd_\cA(F^s)-t\mid \FF\text{ is a finite quasi-projective filtration by }M\}, \\	
\qpl_\cA(M) &:= \inf\{\pd_\cA(F^s)-s\mid \FF\text{ is a finite quasi-projective filtration by }M\}.
\end{align*}
In addition, we define $\c_\cA(M) := \qpl_\cA(M) - \qpd_\cA(M)$.
\end{enumerate}
By convention, we set $\qpd_\cA(0)=\qpl_\cA(0)=-\infty$.
\end{dfn}

For an object $M$ of $\cA$, we have defined its quasi-projective dimension and quasi-projective length in two ways, using the notation $\qpd_\cA(M)$ and $\qpl_\cA(M)$ in both cases.
This abuse of notation is justified by the following proposition.

\begin{prop}\label{modcpx}
Let $M$ be a nonzero object of $\cA$.
Then the quasi-projective dimensions and quasi-projective lengths of $M$ defined in Definitions~\ref{defmod} and \ref{deffilt} coincide.
\end{prop}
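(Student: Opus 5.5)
The plan is to prove both inequalities, $\le$ and $\ge$, between the invariants of Definitions~\ref{defmod} and~\ref{deffilt}. In each direction we convert one kind of data into the other: smart truncations turn a resolution into a filtration, and reading off homology turns a filtration back into a resolution.

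\emph{From resolutions to filtrations.} Let $P$ be a finite quasi-projective resolution of $M$ with $\sH_i(P)\cong M^{\oplus a_i}$. Put $s=\hinf(P)$ and $t=\hsup(P)$, so that $a_s,a_t>0$.
\begin{itemize}
\item Set $F^i:=\tau_{\ge i}P$ (smart truncation) for $s\le i\le t+1$.
\item The standard triangles $\tau_{\ge i+1}P\to\tau_{\ge i}P\to \sH_i(P)[i]\to$ give $\Cone(F^{i+1}\to F^i)\cong M[i]^{\oplus a_i}$.
\item We have $F^{t+1}\simeq 0$ and $F^s\simeq P\in\cP(\cA)$ with $P\not\simeq 0$.
\end{itemize}
So $\FF=(F^{t+1}\to\cdots\to F^s)$ is a finite quasi-projective filtration by $M$ with index interval $[s,t]$. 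Since $P$ itself is a bounded projective complex, $\pd_\cA(F^s)\le\sup(P)$. Hence the filtration invariants satisfy
\[
\qpd\le \sup(P)-\hsup(P),\qquad \qpl\le\sup(P)-\hinf(P)\le\sup(P)-\inf(P).
\]
Taking infima gives both inequalities ``filtration $\le$ resolution''.

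\emph{From filtrations to resolutions.} Let $\FF$ be a finite quasi-projective filtration by $M$ with index interval $[s,t]$. The key claim, proved by descending induction on $i$, is:
\begin{quote}
$\sH_n(F^i)\cong M^{\oplus a_n}$ for $i\le n\le t$, and $\sH_n(F^i)=0$ for all other $n$.
\end{quote}
For the inductive step, use the long exact sequence \eqref{eq:bound} of the triangle $F^{i+1}\to F^i\to M[i]^{\oplus a_i}\to F^{i+1}[1]$. Because $M$ is an object of $\cA$, $M[i]^{\oplus a_i}$ has homology only in degree $i$. The connecting map lands in $\sH_{i-1}(F^{i+1})$, which vanishes by induction. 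Hence:
\begin{itemize}
\item $\sH_n(F^i)\cong\sH_n(F^{i+1})$ for $n\ne i$;
\item $\sH_i(F^i)\cong M^{\oplus a_i}$.
\end{itemize}
Thus $X:=F^s$ has $\sH_n(X)\cong M^{\oplus a_n}$, with $\hsup(X)=t$ and $\hinf(X)=s$.

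Now choose a bounded projective complex $P\simeq X$ with $\sup(P)=\pd_\cA(X)$. Then $P$ is a finite quasi-projective resolution of $M$ with $\sup(P)-\hsup(P)=\pd_\cA(F^s)-t$, which yields the qpd inequality.

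\emph{Main obstacle: the qpl inequality.} Here we need $\inf(P)=s$, and this is the step requiring care. The plan is to kill the part of $P$ below degree $s$:
\begin{itemize}
\item The tail $0\to B_{s-1}\to P_{s-1}\to\cdots\to P_{\inf P}\to 0$, with $B_{s-1}=\Im(P_s\to P_{s-1})$, is exact and ends in projectives. It therefore splits from the right, so $B_{s-1}$ is projective.
\item Then $Z_s=\Ker(P_s\to B_{s-1})$ is a direct summand of $P_s$, hence projective.
\item The complex $P':=(\cdots\to P_{s+1}\to Z_s\to 0)$ is a bounded projective complex quasi-isomorphic to $P$.
\item It satisfies $\inf(P')=s$ and $\sup(P')=\sup(P)$; the latter uses $\sup(P)\ge\hsup(P)=t\ge s$.
\end{itemize}
Hence $\sup(P')-\inf(P')=\pd_\cA(F^s)-s$. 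Taking infima gives ``resolution $\le$ filtration'' for both invariants, completing the proof.
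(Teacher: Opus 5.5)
Your proof is correct and follows the paper's argument. Good truncations $\tau_{\ge i}P$ turn a resolution into a filtration, and descending induction on the long exact sequences computes $\sH_*(F^s)$ to turn a filtration back into a resolution. The one addition is that you justify the choice of a bounded projective complex $P\simeq F^s$ with both $\sup(P)=\pd_\cA(F^s)$ and $\inf(P)=\hinf(F^s)$, by splitting off the exact projective tail below degree $s$; the paper simply asserts that such a $P$ can be chosen.
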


\begin{proof}
We write $\qpd_\cA^\triangle(M)$ and $\qpl_\cA^\triangle(M)$ for the invariants defined in Definitions~\ref{deffilt} and simply write $\qpd_\cA(M)$ and $\qpl_\cA(M)$ for the invariants defined in Definitions~\ref{defmod}.

Let $\FF=(F^{t+1}\to F^t\to\cdots\to F^{s+1}\to F^s)$ be a finite quasi-projective filtration by $M$, and write $\Cone(F^{i+1}\to F^i)\simeq M[i]^{\oplus a_i}$.
Backward induction on $i$ shows
\begin{align}\label{eq:modcpx}
\sH_k(F^i)\cong
\begin{cases}
M^{\oplus a_k}&\text{if }i\leq k\leq t,\\
0&\text{otherwise}.
\end{cases}
\end{align}
Indeed, the assertion is clear for $i=t$, since $F^t \simeq M[t]^{\oplus a_t}$.
Assume that $i < t$ and that (\ref{eq:modcpx}) holds for $i+1$.
From the exact triangle $F^{i+1} \to F^i \to M[i]^{\oplus a_i} \to F^{i+1}[1]$, we obtain an exact sequence
\[
0 \to \sH_{i}(F^{i+1}) \to \sH_{i}(F^{i}) \to M^{\oplus a_i} \to \sH_{i-1}(F^{i+1}) \to \sH_{i-1}(F^{i}) \to 0
\]
and isomorphisms $\sH_{k}(F^{i+1}) \cong \sH_{k}(F^{i})$ for all $k \neq i,i-1$.
Using the induction hypothesis, the above exact sequence yields an isomorphism $\sH_{i}(F^{i}) \cong M^{\oplus a_i}$ and $\sH_{i-1}(F^i) \cong 0$.
Therefore, (\ref{eq:modcpx}) holds for $i$.
Choose a projective resolution $P$ of $F^s$ such that $\sup(P) = \pd_\cA(F^s)$ and $\inf(P) = \hinf(F^s)$.
Since $P$ and $F^s$ are quasi-isomorphic, (\ref{eq:modcpx}) with $i=s$ shows that $P$ is a finite quasi-projective resolution of $M$.
Therefore, we get 
\begin{align*}
\qpd_\cA(M) &\le \sup(P) - \hsup(P) = \pd_\cA(F^s) - t \\
\qpl_\cA(M) &\le \sup(P) - \inf(P) = \pd_\cA(F^s) - s.
\end{align*}
Here, $t = \hsup(P)$ and $s = \hinf(F^s) = \inf(P)$ since $a_t, a_s > 0$.
Taking infima over $\FF$ gives $\qpd_\cA(M)\leq\qpd_\cA^\triangle(M)$ and $\qpl_\cA(M)\leq\qpl_\cA^\triangle(M)$.

Conversely, take a finite quasi-projective resolution $P$ of $M$.
Set $s:= \hinf(P)$ and $t := \hsup(P)$.
By taking good truncations, we obtain exact triangles
\[
\tau_{\ge i+1}P \to \tau_{\ge i}P \to \sH_i(P)[i] \to \tau_{\ge i+1}P[1]
\]
for $i = s,\ldots,t$.
Since $\sH_i(P) \cong M^{\oplus a_i}$ for some non-negative integers $a_i$, $\tau_{\ge t+1}P \simeq 0$, and $\tau_{\ge s}P \simeq P \in \Pf(\cA)$, we get a finite quasi-projective filtration
\[
(\tau_{\ge t+1}P \to \tau_{\ge t}P \to \cdots \to \tau_{\ge s+1}P \to \tau_{\ge s}P)
\]
by $M$ in $\Db(\cA)$.
Consequently,
\begin{align*}
\qpd_\cA^\triangle(M) &\le \pd_\cA(P) -t \le \sup(P) - \hsup(P) \\
\qpl_\cA^\triangle(M) &\le \pd_\cA(P) -s \le \sup(P) - \hinf(P) \le \sup(P) - \inf(P).
\end{align*}
Taking infima over all finite quasi-projective resolutions $P$ of $M$ proves the reverse inequalities and hence we obtain $\qpd_\cA^\triangle(M) = \qpd_\cA(M)$ and $\qpl_\cA^\triangle(M) = \qpl_\cA(M)$.
\end{proof}

We next establish several basic properties of quasi-projective dimension for complexes that extend the corresponding results in the module case.

\begin{prop}\label{pd-qpd}
Let $\cA$ be an abelian category with enough projectives and let $M\in\Db(\cA)$.
Then
\[
\hsup(M) \le \qpd_\cA(M) \le \pd_\cA(M).
\]
If $\pd_\cA(M)<\infty$, then $\qpd_\cA(M)=\pd_\cA(M)$.
\end{prop}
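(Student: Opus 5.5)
We prove the two inequalities separately, using the conventions $\qpd_\cA(0)=-\infty$ and $\hsup(0)=-\infty$ so that we may assume $M\not\simeq 0$; the final equality will then follow by combining them with a reverse bound.

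\emph{Upper bound.} If $\pd_\cA(M)=\infty$ there is nothing to prove. Otherwise the one-step sequence $(0\to M)$ is a finite filtration of $M$ by $M$ with index interval $[0,0]$ and $a_0=1$, since $\Cone(0\to M)\cong M[0]$. As $M\in\cP(\cA)$ is non-zero, this is a finite quasi-projective filtration by $M$, and hence
\[
\qpd_\cA(M)\le \pd_\cA(M)-0=\pd_\cA(M).
\]

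\emph{Lower bound.} Let $\FF=(F^{t+1}\to\cdots\to F^s)$ be any finite quasi-projective filtration by $M$. Apply \Cref{bound}(1)(ii) to the homological functor $\sH_*$. This is allowed because $M\in\Db(\cA)$ gives $\hsup(M)<\infty$, and it yields $\hsup(F^s)=\hsup(M)+t$. For any object $P$ with $P_i=0$ for $i>n$ we have $\sH_i(P)=0$ for $i>n$, so $\hsup(F^s)\le \pd_\cA(F^s)$. Therefore
\[
\pd_\cA(F^s)-t\ge \hsup(M).
\]
Taking the infimum over all such $\FF$ gives $\hsup(M)\le\qpd_\cA(M)$.

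\emph{Equality when $\pd_\cA(M)<\infty$.} It remains to show $\pd_\cA(M)\le \pd_\cA(F^s)-t$ for every finite quasi-projective filtration $\FF$. This is the main step. Since $M\in\cP(\cA)$ and $\cP(\cA)$ is thick, \Cref{thick} gives $F^i\in\cP(\cA)$ for all $i$.

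For objects $X\in\cP(\cA)$, we want to detect $\pd_\cA(X)$ through the cohomological functor $\Ext^n_\cA(X,-)$. Concretely,
\[
\pd_\cA(X)=\sup\{n\mid \Ext^n_\cA(X,N)\neq0 \text{ for some } N\in\cA\}.
\]
To verify this, take a minimal-length bounded projective representative $P$ of $X$ with $\sup(P)=p$. If $\Ext^p_\cA(X,N)=0$ for all $N$, then $\Ext^p_\cA(X,\Ker d_{p-1})=0$. This forces $P_p\to P_{p-1}$ to be a split monomorphism, so the top term can be cancelled, contradicting minimality. The reverse inequality is clear.

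We now apply the contravariant functor $\Ext_\cA^*(-,N)$, which sends $\FF$ to a cofiltration; equivalently, we use the exact triangles $F^{i+1}\to F^i\to M[i]^{\oplus a_i}$ directly. Fix $N$ and set $p=\pd_\cA(M)$. We have $\Ext^{p+i}_\cA(M[i],N)\cong\Ext^p_\cA(M,N)$, and this vanishes for all $N$ only if the degree is exceeded, so we choose $N$ with $\Ext^p_\cA(M,N)\neq0$. The argument of \Cref{bound}(2)(ii) then gives $\sup\Ext^*_\cA(F^s,N)=p+t$. Here the top degree comes from $F^t\cong M[t]^{\oplus a_t}$, and it cannot be cancelled, because the lower pieces only contribute in degrees at most $p+i<p+t$.

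Hence $\pd_\cA(F^s)\ge p+t$, that is, $\pd_\cA(F^s)-t\ge\pd_\cA(M)$. Taking the infimum over $\FF$ gives $\qpd_\cA(M)\ge\pd_\cA(M)$, and combined with the upper bound this proves $\qpd_\cA(M)=\pd_\cA(M)$. The delicate point is the characterization of $\pd$ via Ext vanishing for complexes in $\cP(\cA)$, which we plan to handle by the splitting argument above.
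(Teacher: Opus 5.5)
Your bounds $\hsup(M)\le\qpd_\cA(M)\le\pd_\cA(M)$ are proved exactly as in the paper. The equality part, however, has a step that fails as written: your proof that some $N\in\cA$ has $\Ext^p_\cA(M,N)\neq0$ for $p=\pd_\cA(M)$. The test object $N=\Ker d_{p-1}$ does not force $d_p$ to split when $P$ is a complex rather than a resolution. For instance, take $\cA=\Mod R$ and $X=R[2]$, represented by $R$ in degree $2$ (or by $0\to R\xrightarrow{0}R\xrightarrow{1}R\to0$ in degrees $2,1,0$). Then $\Ker d_1=0$, so $\Ext^2_R(X,\Ker d_1)=0$, yet $d_2$ is not a split monomorphism.

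This choice is carried over from the module case, where exactness gives $\Ker d_{p-1}=\Im d_p\cong P_p$. For complexes you should test against $P_p$ itself. Since $P$ is a bounded complex of projectives with $P_{p+1}=0$,
\[
\Ext^p_\cA(X,N)\cong\Hom_{\mathsf{K}(\cA)}(P,N[p])\cong\operatorname{coker}\bigl(\Hom_\cA(P_{p-1},N)\xrightarrow{d_p^*}\Hom_\cA(P_p,N)\bigr).
\]
Vanishing for $N=P_p$ therefore yields $h$ with $hd_p=1_{P_p}$. Then $P\cong(P_p\xrightarrow{\cong}\Im d_p)\oplus P'$, where $P'$ is a complex of projectives vanishing above degree $p-1$. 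Dropping the contractible summand contradicts the minimality of $p$.

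With that repair your argument is complete, and it takes a genuinely different route from the paper's. Your computation $\Ext^{p+t}_\cA(F^s,N)\cong\Ext^p_\cA(M,N)^{\oplus a_t}\neq0$ is correct. Formally it is Lemma~\ref{bound2}(2)(ii) applied in $\D(\cA)^{\op}$, where $\FF$ becomes a cofiltration and $\Ext^*_\cA(-,N)$ is cohomological; Lemma~\ref{bound}(2)(ii), as you cite it, would produce $-s$ instead of $+t$.

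The paper needs no test object at all. It rotates the triangles to $M[i-1]^{\oplus a_i}\to F^{i+1}\to F^i\to M[i]^{\oplus a_i}$ and uses that $\pd$ of the middle term is at most the maximum of the outer terms. Iterating gives $\pd_\cA(M)+t=\pd_\cA(F^t)\le\max\{\pd_\cA(F^s),\pd_\cA(M)+t-2\}$. Your version instead makes the equality one more instance of the top-degree-survival argument behind the depth and width computations. The price is that you must first prove that $\Ext^*_\cA(-,N)$, for $N\in\cA$, detects $\pd_\cA$ on $\cP(\cA)$.
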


\begin{proof}
First we note that if $\pd_\cA(M) < \infty$, then we can take $(F^1=0 \to F^0 = M)$ as a finite quasi-projective filtration by $M$ so that $\qpd_\cA(M) \le \pd_\cA(M)$.
Therefore, we may assume $M \not\simeq 0$ and $\qpd_\cA(M) < \infty$.
Take a finite quasi-projective filtration $\FF=(F^{t+1}\to F^t\to\cdots\to F^{s+1}\to F^s)$ by $M$.

First we prove $\hsup(M) \le \qpd_\cA(M)$.
It follows from \Cref{bound}(1)(ii) that $\hsup(M) = \hsup(F^s) - t$.
Since $\hsup(F^s) \le \pd_\cA(F^s)$, we get $\hsup(M) \le \pd_\cA(F^s) - t$.
Taking the infimum over all $\FF$ yields $\hsup(M) \le \qpd_\cA(M)$.

Next, assume $\pd_\cA(M) < \infty$ and prove $\qpd_\cA(M) \ge \pd_\cA(M)$ (the reverse inequality is proved above).
It follows from \Cref{thick} that every $F^i$ belongs to $\cP(\cA)$. 
Using the exact triangle $F^{i+1} \to F^i \to M[i]^{\oplus a_i} \to F^{i+1}[1]$, we get
\[
 \pd_{\cA}(F^{i+1})
 \le \max\{\pd_{\cA}(F^i), \pd_\cA(M) + i-1\} 
\]
for each $i= s,\ldots, t$.
By induction, we get $\pd_\cA(F^t) \le \max\{\pd_\cA(F^s), \pd_\cA(M) + t-2\}$; see \cite[Corollary 8.1.9]{CFH}.
On the other hand, as $F^t \simeq M[t]^{\oplus a_t}$ and $a_t > 0$, we have $\pd_\cA(F^t) = \pd_\cA(M) + t$.
This shows that $\pd_\cA(M) + t \le \pd_{\cA}(F^s)$ and so $\pd_\cA(M) \le \pd_\cA(F^s) -t$.
Taking the infimum over $\FF$ gives $\pd_\cA(M) \le \qpd_\cA(M)$.
Thus we are done.
\end{proof}

\begin{prop}\label{prop:basic-qpd}
Let $\cA$ be an abelian category with enough projective objects.
\begin{enumerate}[\rm(1)]
\item For $M\in\Db(\cA)$ and $n\in\ZZ$, one has $\qpd_\cA(M[n])=\qpd_\cA(M)+n$.
\item For $M\in\Db(\cA)$ and a positive integer $n$, one has $\qpd_\cA(M^{\oplus n})=\qpd_\cA(M)$.
\item For $M,N\in\Db(\cA)$, one has $\qpd_\cA(M\oplus N)\leq\max\{\qpd_\cA(M),\qpd_\cA(N)\}$.
\end{enumerate}
\end{prop}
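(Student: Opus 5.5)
We argue directly from \Cref{deffilt}, transforming finite quasi-projective filtrations. The case $M\simeq 0$ is governed by the convention $\qpd_\cA(0)=-\infty$ together with $\qpd_\cA(M)\ge\hsup(M)>-\infty$ for nonzero $M$ (\Cref{pd-qpd}). So we may assume all objects involved are non-zero.

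For (1), let $\FF=(F^{t+1}\to\cdots\to F^s)$ be a finite quasi-projective filtration by $M$ with cones $M[i]^{\oplus a_i}$. The same objects, reindexed as $G^{j}:=F^{j-n}$ for $j\in[s+n,t+n]$, have cones
\[
\Cone(G^{j+1}\to G^j)\cong M[j-n]^{\oplus a_{j-n}}=(M[n])[j-2n]^{\oplus a_{j-n}}.
\]
This is the wrong shift, so we instead reindex in the other direction. Set $G^{j}:=F^{j+n}$ for $j\in[s-n,t-n]$. Then
\[
\Cone(G^{j+1}\to G^j)\cong M[j+n]^{\oplus a_{j+n}}=(M[n])[j]^{\oplus a_{j+n}},
\]
so $\mathbb{G}$ is a finite quasi-projective filtration by $M[n]$ of the same $F^s$, with index interval $[s-n,t-n]$. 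This gives
\[
\qpd_\cA(M[n])\le \pd_\cA(F^s)-(t-n)=\pd_\cA(F^s)-t+n.
\]
Taking the infimum over $\FF$ yields $\qpd_\cA(M[n])\le\qpd_\cA(M)+n$. Applying this to $M[n]$ and $-n$ gives the reverse inequality.

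For (2), we first show $\qpd_\cA(M^{\oplus n})\le\qpd_\cA(M)$. Given a filtration $\FF$ by $M$, take its direct sum $\FF^{\oplus n}$. It is a filtration of $(F^s)^{\oplus n}$ with cones
\[
(M[i]^{\oplus a_i})^{\oplus n}=(M^{\oplus n})[i]^{\oplus a_i},
\]
the same index interval $[s,t]$, and $\pd_\cA((F^s)^{\oplus n})=\pd_\cA(F^s)$.

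Conversely, a filtration by $M^{\oplus n}$ with cones $(M^{\oplus n})[i]^{\oplus a_i}=M[i]^{\oplus na_i}$ is literally a filtration by $M$ with exponents $na_i$, which are still positive at $s$ and $t$. It has the same index interval and the same $F^s$, so $\qpd_\cA(M)\le\qpd_\cA(M^{\oplus n})$.

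For (3), this is the main step, since we must combine two filtrations with different index intervals. We may assume both $\qpd_\cA(M)$ and $\qpd_\cA(N)$ are finite. Take filtrations $\FF$ by $M$ on $[s,t]$ and $\FF'$ by $N$ on $[s',t']$. Using (1), or rather the shift construction in its proof, we cannot freely move $t$. The plan is instead to shift the filtered object. Replacing $\FF'$ by $\FF'[t-t']$ turns it into a filtration by $N[t-t']$, which is not what we want. So instead we pad.

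Suppose $t'\le t$. We want a filtration by $M\oplus N$ on a common interval with top index $t$ or $t'$. Note that filtration steps $F^{i+1}\to F^i$ can be replaced by $F^{i+1}\oplus Q\to F^i\oplus Q$ only with zero cones, and the definition allows $a_i=0$ in the interior but needs nonzero $a_s$ and $a_t$ for both summands simultaneously. This forces mixed cones $M[i]^{a_i}\oplus N[i]^{b_i}$, which are not of the form $(M\oplus N)[i]^{c}$.

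So I would first try to reduce to filtrations with $a_i$ constant. The fix is to sum up: take $\FF\oplus\FF'$ after reindexing $\FF'$ on $[s',t']$ with zero steps extended to $[\min(s,s'),\max(t,t')]$, and then add to each step enough extra copies of $N[i]$ or $M[i]$ to equalize. This is done by direct-summing trivial filtrations of the perfect... here it fails, since $M[i]$ need not be perfect.

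The alternative I would actually pursue is $\FF^{\oplus b}\oplus\FF'^{\oplus a}$-type balancing, combined with the observation that $\mathrm{Cone}$ of $X\xrightarrow{\ \mathrm{id}\ } X$ is zero. I expect the real content to be a lemma that any filtration can be refined so all $a_i$ equal a common value. This is the obstacle, and the bound $\max\{\pd(F^s)-t,\pd(F'^{s'})-t'\}$ would then follow from $\pd(F^s\oplus F'^{s'})=\max$ and the common top index.
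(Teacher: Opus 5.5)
Parts (1) and (2) are fine. In (1) you reindex ($G^j:=F^{j+n}$ gives a filtration of the same $F^s$ by $M[n]$ on $[s-n,t-n]$), whereas the paper applies $[n]$ termwise to get a filtration of $F^s[n]$ by $M[n]$ on $[s,t]$; both give the same bound. Your (2) is the paper's argument.

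Part (3), however, is not proved. You correctly identify the obstacle: the cones $M[i]^{\oplus a_i}\oplus N[i]^{\oplus b_i}$ have the form $(M\oplus N)[i]^{\oplus c_i}$ only when the two exponent sequences agree on a common index interval. But the lemma you propose (refining any filtration to have constant exponents) is left unproved, and it would not suffice anyway. Two filtrations with constant exponents on index intervals of different lengths still cannot be added termwise. Padding a single filtration by zeros or identity maps produces zero exponents at the new endpoints, which the definition forbids. Your closing bound also presumes a common top index, which nothing in your argument arranges.

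The missing idea has two parts.

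\emph{Moving a filtration.} Shifting and reindexing together move a filtration without changing the filtering object. If $\FF$ filters $F^s$ by $M$ on $[s,t]$, then putting $F^{k-j}[j]$ in position $k$ gives a filtration of $F^s[j]$ by $M$ on $[s+j,t+j]$, and $\pd_\cA(F^s[j])-(t+j)=\pd_\cA(F^s)-t$. You discarded $\FF'[t-t']$ because it filters $N[t-t']$, but composing with your own reindexing from (1) undoes exactly that.

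\emph{Matching exponents by convolution.} Write $\FF'$ as $\GG=(G^{v+1}\to\cdots\to G^u)$ with cones $N[j]^{\oplus b_j}$. Extend each moved copy $\FF_j$ ($u\le j\le v$) to the interval $[s+u,t+v]$ by zeros above and identity maps of $F^s[j]$ below, and put $\widetilde{\FF}=\bigoplus_j\FF_j^{\oplus b_j}$. Its cones are $M[k]^{\oplus c_k}$ with $c_k=\sum_{i+j=k}a_ib_j$, and $c_{s+u}=a_sb_u>0$, $c_{t+v}=a_tb_v>0$. The same construction with $M$ and $N$ interchanged gives $\widetilde{\GG}$ with cones $N[k]^{\oplus c_k}$; the exponents are the same because convolution is commutative. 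Hence $\widetilde{\FF}\oplus\widetilde{\GG}$ is a finite quasi-projective filtration by $M\oplus N$ with top index $t+v$. It filters an object of projective dimension $\max\{\pd_\cA(F^s)+v,\pd_\cA(G^u)+t\}$ (using $b_v,a_t>0$). So $\qpd_\cA(M\oplus N)\le\max\{\pd_\cA(F^s)-t,\pd_\cA(G^u)-v\}$, and taking infima gives (3). No equal-exponent lemma is needed.
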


\begin{proof}
(1) The assertion is clear if $M \simeq 0$ or $\qpd_\cA(M) = \infty$.
Assume $M \not\simeq 0$ and $\qpd_\cA(M) < \infty$.
Let $\FF = (F^{t+1} \to F^t \to \cdots \to F^{s+1} \to F^s)$ be a finite quasi-projective filtration by $M$ with $\qpd_\cA(M) = \pd_\cA(F^s)-t$.
Then $\FF[n] := (F^{t+1}[n] \to F^t[n] \to \cdots \to F^{s+1}[n] \to F^s[n])$ is a quasi-projective filtration by $M[n]$. 
Thus $\qpd_\cA(M[n]) \le \pd_\cA(F^s[n]) - t = \pd_\cA(F^s) - t + n = \qpd_\cA(M) + n$.
Using the same argument, $\qpd_{\cA}(M) = \qpd_\cA((M[n])[-n]) \le \qpd_\cA(M[n]) -n$.
Therefore, we obtain $\qpd_\cA(M[n]) = \qpd_\cA(M) + n$.

(2) The termwise direct sum of $n$ copies of a finite quasi-projective filtration by $M$ is a quasi-projective filtration by $M^{\oplus n}$, while every finite quasi-projective filtration by $M^{\oplus n}$ is also a finite quasi-projective filtration by $M$.
This shows the equality.

(3) Let
$\FF=(F^{t+1}\to F^t\to\cdots\to F^{s+1}\to F^s)$ and $\GG=(G^{v+1}\to G^v\to\cdots\to G^{u+1}\to G^u)$
be finite quasi-projective filtrations by $M$ and $N$, respectively.
Set $\Cone(F^{i+1} \to F^{i}) \simeq M[i]^{\oplus a_i}$ and $\Cone(G^{j+1} \to G^{j}) \simeq N[j]^{\oplus b_j}$ for each $i,j$.
Put $S=s+u$, $T=t+v$, and
\[
c_k=\sum_{i+j=k}a_ib_j \qquad(k = S, \ldots, T).
\]
For each $j \in [u,v]$, we extend the shifted filtration $\FF[j]$ to the common indexing interval $[S,T]$ by adding zero objects above degree $t+j$ and repeating $F^s[j]$ with identity maps below degree $s+j$ as follows
\[
\FF_j := (0 \to \cdots \to F^{t+1}[j] \to F^{t}[j] \to F^{t-1}[j] \to \cdots \to  F^{s+1}[j] \to F^{s}[j] \xrightarrow{\id} F^{s}[j] \xrightarrow{\id} \cdots \xrightarrow{\id} F^{s}[j]).
\]
Setting $\widetilde{\FF} = \bigoplus_{j=u}^v \FF_j^{\oplus b_j}$, we obtain a finite quasi-projective filtration by $M$ satisfying 
\[
\Cone(\widetilde{F}^{k+1} \to \widetilde{F}^k) \simeq M[k]^{\oplus c_k} \qquad(k = S, \ldots, T).
\]
Similarly, we construct a finite quasi-projective filtration $\widetilde{\GG}$ by $N$ satisfying  
\[
\Cone(\widetilde{G}^{k+1} \to \widetilde{G}^k) \simeq N[k]^{\oplus c_k} \qquad(k = S, \ldots, T).
\]
Hence $\widetilde{\FF} \oplus \widetilde{\GG}$ is a quasi-projective filtration by $M \oplus N$.
Moreover, as $\widetilde{F}^S = \bigoplus_{j=u}^v F^{s}[j]^{\oplus b_j}$, we have 
\[
\pd_\cA(\widetilde{F}^S) = \max\{\pd_\cA(F^s) + j \mid j = u, \ldots, v\} = \pd_\cA(F^s) + v.
\]
Similarly, $\pd_\cA(\widetilde{G}^S) = \pd_\cA(G^u) + t$.
Thus, 
\begin{align*}
\qpd_\cA(M \oplus N) \le \pd_\cA(\widetilde{F}^S \oplus \widetilde{G}^S) - T 
&= \max\{\pd_\cA(\widetilde{F}^S), \pd_\cA(\widetilde{G}^S)\} - T \\
&= \max\{\pd_\cA(F^s) + v, \pd_\cA(G^u) + t\} - (t+v)\\
&= \max\{\pd_\cA(F^s) -t, \pd_\cA(G^u) -u\}.
\end{align*}
Taking the infima over all finite quasi-projective filtrations $\FF$ and $\GG$, we obtain $\qpd_\cA(M \oplus N) \le \max\{\qpd_\cA(M), \qpd_\cA(N)\}$.
\end{proof}

For $\cA = \Mod R$, we simply write $\qpd_R = \qpd_{\Mod R}$, $\qpl_R = \qpl_{\Mod R}$, and $\c_R = \c_{\Mod R}$.
For homologically finite complexes, it does not matter whether the filtration is constructed in $\Mod R$ or in $\mod R$.

\begin{prop}
Let $M \in \Dfb(R)$.
If $\qpd_R(M) < \infty$, then there is a finite quasi-projective filtration by $M$ in $\Dfb(R)$.
Moreover, $\qpd_R(M) = \qpd_{\mod R}(M)$, $\qpl_R(M) = \qpl_{\mod R}(M)$, and $\c_R(M) = \c_{\mod R}(M)$ hold.  
\end{prop}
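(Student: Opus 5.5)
The plan is to use \Cref{thick} together with the identification $\Dfb(R)\simeq\Db(\mod R)$ and $\Pf(R)\simeq\cP(\mod R)$.

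First, take any finite quasi-projective filtration $\FF=(F^{t+1}\to\cdots\to F^s)$ by $M$ in $\D(\Mod R)$. Since $M\in\Dfb(R)$ and $\Dfb(R)$ is a thick subcategory of $\D(R)$, \Cref{thick} gives $F^i\in\Dfb(R)$ for all $i$. In particular $F^s\in\Dfb(R)\cap\cP(\Mod R)=\Pf(R)$. The whole filtration, including the morphisms and the exact triangles witnessing the cones, then lives in the full triangulated subcategory $\Dfb(R)$. This proves the first assertion.

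For the equalities, I would transport filtrations along the triangle equivalence $\Db(\mod R)\simeq\Dfb(R)$, which commutes with shifts and finite sums. A finite quasi-projective filtration by $M$ in $\Db(\mod R)$ has its bottom term in $\cP(\mod R)$, and it is sent to a filtration in $\Dfb(R)\subseteq\D(R)$ with bottom term in $\Pf(R)\subseteq\cP(\Mod R)$. Conversely, by the first step every filtration in $\D(\Mod R)$ lies in $\Dfb(R)$, and so comes from one in $\Db(\mod R)$ whose bottom term lies in $\cP(\mod R)$, using $\Pf(R)\simeq\cP(\mod R)$. Thus the two sets of filtrations correspond bijectively up to isomorphism, and the index intervals $[s,t]$ are preserved.

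It remains to check that $\pd_{\mod R}(F^s)=\pd_{\Mod R}(F^s)$ for $F^s\in\Pf(R)$. This is the only point needing care. The inequality $\ge$ is clear, since a bounded complex of finitely generated projectives is a bounded complex of projectives. For $\le$, I would use the standard fact that for a perfect complex over a local ring both numbers equal $\hsup(k\ltensor_R F^s)$ (cf.\ \cite{CFH}). Alternatively, take a minimal free resolution of finite rank and truncate it at $\pd_{\Mod R}(F^s)$; the truncation is legitimate because the relevant syzygy is flat and finitely generated, hence projective. With this, the infima in \Cref{deffilt} agree, so $\qpd_R(M)=\qpd_{\mod R}(M)$ and $\qpl_R(M)=\qpl_{\mod R}(M)$, and hence the $\c$'s agree. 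If $\qpd_R(M)=\infty$, the bijection shows both sides are infinite, and the case $M\simeq 0$ holds by convention.
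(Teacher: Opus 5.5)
Your proposal is correct and follows the paper's argument. The paper applies \Cref{thick} with $\cX=\Dfb(R)$ to see that every finite quasi-projective filtration by $M$ already lives in $\Dfb(R)\simeq\Db(\mod R)$, and then compares the infima, which is exactly your route. You additionally spell out the equality $\pd_{\mod R}(F^s)=\pd_R(F^s)$ for perfect $F^s$, which the paper leaves implicit behind ``the reverse inequalities are easy.''
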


\begin{proof}
Let $\FF = (F^{t+1} \to F^{t} \to \cdots \to F^{s+1} \to F^s)$ be a finite quasi-projective filtration by $M$ in $\Db(R)$.
Applying \Cref{thick} for $\cX = \Dfb(R)$, each $F^i$ belongs to $\Dfb(R)$.
Hence every finite quasi-projective filtration by $M$ in $\Db(R)$ is already a filtration in $\Dfb(R)$.
This shows $\qpd_{\mod R}(M) \le \qpd_R(M)$ and $\qpl_{\mod R}(M) \le \qpl_R(M)$.
The reverse inequalities are easy.
\end{proof}

The following results are specific to commutative algebra.

\begin{prop}\label{proploc}
Let $M \in \Db(R)$.
\begin{enumerate}[\rm(1)]
\item	
For a finite quasi-projective filtration $\FF = (F^{t+1} \to F^{t} \to \cdots \to F^{s+1} \to F^s)$ by $M$, one has 
\[
\Supp_R(M) = \Supp_R(F^i) \qquad (i = s,\ldots,t).
\]

\item
For a prime ideal $\fp$, one has $\qpd_{R_\fp}(M_\fp) \le \qpd_{R}(M) \mbox{ and } \qpl_{R_\fp}(M_\fp) \le \qpl_{R}(M)$.

\item
For $P \in \Pf(R)$, one has $\qpd_{R}(M \ltensor_R P) \le \qpd_R(M) + \pd_R(P) \mbox{ and } \qpl_{R}(M \ltensor_R P) \le \qpl_R(M) + \pd_R(P)$.
\end{enumerate}
\end{prop}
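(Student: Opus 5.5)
For (1), I will use the exact triangles $F^{i+1}\to F^i\to M[i]^{\oplus a_i}\to F^{i+1}[1]$. Localize at a prime $\fp$; localization is exact, so these remain exact triangles in $\D(R_\fp)$.

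The inclusion $\Supp_R(F^i)\subseteq\Supp_R(M)$ comes from descending induction on $i$. Since $F^{t+1}\simeq0$, the triangle gives $\Supp(F^i)\subseteq\Supp(F^{i+1})\cup\Supp(M)$, and the claim follows.

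For the reverse inclusion, suppose $\fp\in\Supp_R(M)$. Then $\FF_\fp$ is a finite filtration of $F^i_\fp$ by $M_\fp\neq0$, with index interval $[i,t]$ after truncating: the tail $F^{t+1}_\fp\to\cdots\to F^i_\fp$ is itself a finite filtration of $F^i_\fp$ by $M_\fp$. Its end multiplicities $a_i,a_t$ may not be positive, but \Cref{bound}(1)(ii) applied to the full tail still gives $\hsup(F^i_\fp)=\hsup(M_\fp)+t$ as long as $a_t>0$, which holds. Hence $F^i_\fp\not\simeq0$. The only subtlety is that the definition requires $a_s>0$ at the new bottom index. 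I will avoid this either by noting that the proof of (1)(ii) only uses $a_t>0$, or by directly rerunning the isomorphism chain $H_{v+t}(F^i)\cong\cdots\cong H_v(M)^{\oplus a_t}$ with $H=\sH_*(-_\fp)$.

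For (2), take a finite quasi-projective filtration $\FF$ by $M$. The exact functor $(-)_\fp$ sends it to a finite filtration $\FF_\fp$ of $F^s_\fp$ by $M_\fp$ with the same index interval and the same $a_i$. Moreover $\pd_{R_\fp}(F^s_\fp)\le\pd_R(F^s)<\infty$, by localizing a bounded projective complex. If $M_\fp\not\simeq0$, then $F^s_\fp\neq0$ by (1), so this is a finite quasi-projective filtration by $M_\fp$. This gives $\qpd_{R_\fp}(M_\fp)\le\pd_R(F^s)-t$ and $\qpl_{R_\fp}(M_\fp)\le\pd_R(F^s)-s$, and I then take infima. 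If $M_\fp\simeq0$, both sides on the left are $-\infty$ and there is nothing to prove.

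For (3), I apply the exact functor $-\ltensor_RP$ to $\FF$, which gives a filtration of $F^s\ltensor_RP$ by $M\ltensor_RP$ with the same indices. Then $F^s\ltensor_RP\in\cP$ with $\pd(F^s\ltensor_RP)\le\pd(F^s)+\pd(P)$, using the tensor product of bounded projective complexes. If $M\ltensor_RP\simeq0$ the claim is trivial. Otherwise I need $F^s\ltensor_RP\neq0$, which follows as in (1) from $a_t>0$: the top homology of $M\ltensor P$ survives. The main obstacle throughout is this nonvanishing of the filtered object and the positivity of the end multiplicities. Both are handled by the observation that the original $a_s,a_t>0$ are unchanged by any additive exact functor, together with \Cref{bound}(1)(ii) applied to $\sH_*$ of the image. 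Taking infima finishes the proof.
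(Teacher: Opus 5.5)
Your proposal is correct and follows essentially the same route as the paper. The support inclusion comes from the triangles (the paper phrases this through \Cref{thick} applied to the subcategory of complexes supported in $\Supp_R(M)$). The reverse inclusion reruns the argument of \Cref{bound}(1)(ii) on the localized filtration, using only $a_t>0$. Parts (2) and (3) apply the exact functors $(-)_\fp$ and $-\ltensor_R P$ to the filtration. Your explicit remarks that the truncated tail may have $a_i=0$, and that $\pd_R(F^s\ltensor_R P)\le\pd_R(F^s)+\pd_R(P)$ suffices (the paper quotes equality), are exactly the details the paper's brief proof leaves implicit.
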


\begin{proof}
(1) The inclusion $\Supp_R(F^i) \subseteq \Supp_R(M)$ follows from \Cref{thick}, applied to $\cX = \{X \in \Db(R) \mid \Supp_R(X) \subseteq \Supp_R(M)\}$.
For the reverse inclusion, fix $\fp \in \Supp_R(M)$ and prove $(F^i)_\fp \not\simeq 0$.
Then $\FF_\fp = ((F^{t+1})_\fp \to (F^{t})_\fp \to \cdots \to (F^{s+1})_\fp \to (F^s)_\fp)$ is a finite filtration of $(F^s)_\fp$ by $M_\fp$.
Applying the same argument as in the proof of \Cref{bound}(1)(ii), we obtain $\hsup((F^i)_\fp) = \hsup(M_\fp) + t > -\infty$ and so $(F^i)_\fp \not\simeq 0$.

(2) We may assume $\fp \in \Supp_R(M)$, since otherwise the inequalities are trivial.
Let $\FF = (F^{t+1} \to F^{t} \to \cdots \to F^{s+1} \to F^s)$ be a quasi-projective filtration by $M$.
Then $\FF_\fp = ((F^{t+1})_\fp \to (F^{t})_\fp \to \cdots \to (F^{s+1})_\fp \to (F^s)_\fp)$ is a quasi-projective filtration by $M_\fp$ in $\Db(R_\fp)$.
Indeed, $(F^s)_\fp$ is a non-zero perfect complex by (1).
It follows that $\qpd_{R_\fp}(M_\fp) \le \qpd_{R}(M)$ and $\qpl_{R_\fp}(M_\fp) \le \qpl_{R}(M)$.

(3) The proof is similar to that of (2).
We use the fact that $\pd_R(Q \ltensor_R P) = \pd_R(Q) + \pd_R(P)$ for $P,Q \in \Pf(R)$.
\end{proof}

Recall that an object $M \in \Dfb(R)$ is said to be {\em virtually small} if $M \simeq 0$ or the thick closure $\thick(M)$ of $M$ contains a non-zero perfect complex, where $\thick(M)$ denotes the smallest thick subcategory of $\Dfb(R)$ containing $M$.

\begin{prop}\label{vir}
Let $M \in \Dfb(R)$.
If $\qpd_R(M) < \infty$, then $M$ is virtually small.	
\end{prop}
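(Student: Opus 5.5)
If $M\simeq 0$ there is nothing to prove, so assume $M\not\simeq 0$. Since $\qpd_R(M)<\infty$, the proposition preceding \Cref{proploc} gives a finite quasi-projective filtration
\[
\FF=(F^{t+1}\to F^t\to\cdots\to F^{s+1}\to F^s)
\]
by $M$ inside $\Dfb(R)$. By definition, $F^s$ is a non-zero object of $\Pf(R)$. The plan is to show that $F^s\in\thick(M)$; then $\thick(M)$ contains the non-zero perfect complex $F^s$, which is exactly the statement that $M$ is virtually small.

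We prove $F^s\in\thick(M)$ by applying \Cref{thick} with $\cX=\thick(M)$, the smallest thick subcategory of $\Dfb(R)$ containing $M$. The hypotheses hold:
\begin{itemize}
\item $X=F^s$ and $M$ are non-zero objects of $\cT=\Dfb(R)$;
\item $M\in\cX$;
\item $\FF$ is a finite filtration of $F^s$ by $M$ in $\Dfb(R)$.
\end{itemize}
So \Cref{thick} gives $F^i\in\thick(M)$ for all $i=s,\ldots,t$, in particular $F^s\in\thick(M)$.

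For the reader's check, the mechanism behind \Cref{thick} is as follows. There are exact triangles $F^{i+1}\to F^i\to M[i]^{\oplus a_i}\to F^{i+1}[1]$ with $F^{t+1}\simeq 0$, and $M[i]^{\oplus a_i}\in\thick(M)$ because thick subcategories are closed under shifts and finite sums. Descending induction on $i$, using closure under extensions, then places each $F^i$ in $\thick(M)$.

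The only point needing care is that the filtration lives in $\Dfb(R)$ rather than merely in $\Db(R)$, since $\thick(M)$ is formed in $\Dfb(R)$. This is exactly what the proposition preceding \Cref{proploc} supplies: it is itself an application of \Cref{thick} with $\cX=\Dfb(R)$. Beyond that, there is no real obstacle.
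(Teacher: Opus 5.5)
Your proposal is correct and is essentially the paper's proof: it applies \Cref{thick} with $\cX=\thick(M)$ to the filtration to get $F^s\in\thick(M)$, and $F^s$ is a non-zero perfect complex. You also spell out that the filtration lives in $\Dfb(R)$ (so that $F^s\in\Pf(R)$ and $\thick(M)$ can be formed in $\Dfb(R)$). The paper leaves this implicit, and your remark is accurate.
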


\begin{proof}
We may assume $M \not\simeq 0$.
Let $\FF = (F^{t+1} \to F^t \to \cdots \to F^{s+1} \to F^s)$ be a finite quasi-projective filtration by $M$.
Applying \Cref{thick} to $\cX = \thick(M)$, we have $F^s \in \thick(M)$.
Since $F^s$ is a non-zero perfect complex, it follows that $M$ is virtually small.
\end{proof}

The following proposition extends \cite[Corollary 6.21]{GJT}; our argument differs from theirs.

\begin{prop}
Assume that $R$ admits a dualizing complex $D_R$.
Then $\qpd_R(D_R) < \infty$ if and only if $R$ is Gorenstein.	
\end{prop}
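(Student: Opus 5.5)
The direction ``Gorenstein $\Rightarrow$ finite'' is immediate. If $R$ is Gorenstein, then $D_R \simeq R[n]$ for some $n$. Hence $\pd_R(D_R)<\infty$, and \Cref{pd-qpd} gives $\qpd_R(D_R)=\pd_R(D_R)<\infty$.

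For the converse, assume $\qpd_R(D_R)<\infty$. The plan is to use the dagger duality $(-)^\dagger := \rHom_R(-,D_R)$, which is a contravariant exact autoequivalence of $\Dfb(R)$. By \Cref{vir}, $D_R$ is virtually small, so $\thick(D_R)$ contains a non-zero perfect complex $P$. Applying $(-)^\dagger$, which sends thick subcategories to thick subcategories and satisfies $D_R^\dagger\simeq R$, we get $P^\dagger\in\thick(R)=\Pf(R)$. Since $P$ is perfect, we also have $P^\dagger\simeq \rHom_R(P,R)\ltensor_R D_R = P^*\ltensor_R D_R$. Writing $Q:=P^*$, this is a non-zero perfect complex with $Q\ltensor_R D_R$ perfect.

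Alternatively, and more in the spirit of the paper, one can apply $(-)^\dagger$ to a finite quasi-projective filtration $\FF$ of $F^s$ by $D_R$. This produces a finite cofiltration of $(F^s)^\dagger \simeq (F^s)^*\ltensor_R D_R$ by $R$. Since $R$ is perfect, \Cref{thick} (dualized) shows that $(F^s)^*\ltensor_R D_R$ is perfect. This gives the same conclusion: there is a non-zero perfect $Q$ such that $Q\ltensor_R D_R$ has finite projective dimension.

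The remaining step, which I expect to be the main obstacle, is to deduce that $R$ is Gorenstein from this. I would pass to the residue field. Since $Q\ltensor_R D_R$ is perfect, $k\ltensor_R Q\ltensor_R D_R$ has bounded homology. But $k\ltensor_R Q\simeq \bigoplus k[i]^{\oplus b_i}$ is a non-zero finite sum of shifts of $k$, because it is a complex of $k$-vector spaces. Therefore $k\ltensor_R D_R$ is homologically bounded, i.e.\ $\Tor^R_{\gg0}(k,D_R)=0$, which means $\pd_R(D_R)<\infty$ (see \cite{CFH}). A dualizing complex of finite projective dimension forces $R$ to be Gorenstein. Indeed, $R\simeq \rHom_R(D_R,D_R)\simeq D_R^*\ltensor_R D_R$ then has finite injective dimension, or equivalently, the Bass series and Betti series argument yields $D_R\simeq R[n]$. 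The care needed is in justifying the identification $P^\dagger\simeq P^*\ltensor_R D_R$ and in the final standard fact, which I would cite from \cite{CFH}.
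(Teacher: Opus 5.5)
Your proof is correct and follows the same route as the paper. The forward direction goes through \Cref{pd-qpd}, and the converse first gets virtual smallness of $D_R$ from \Cref{vir} and then concludes that $R$ is Gorenstein. The only difference is that the paper settles this last implication with a citation to \cite{Gbook}, whereas you prove it directly: dagger duality gives a non-zero perfect $Q$ with $Q\ltensor_R D_R$ perfect, the residue field then gives $\pd_R(D_R)<\infty$, and from that you get $\id_R(R)<\infty$. Each of these steps is sound.
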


\begin{proof}
If $R$ is Gorenstein, then $D_R$ is isomorphic to a shift of $R$ and hence has finite projective dimension.
Thus, \Cref{pd-qpd} shows $\qpd_R(D_R) = \pd_R(D_R) < \infty$.
Conversely, if $\qpd_R(D_R) < \infty$, then $D_R$ is virtually small by \Cref{vir}.
Hence, $R$ is Gorenstein by \cite[3.3.5]{Gbook}. 
\end{proof}

Recently, Chen, Hu, and Yang introduced another quasi-projective dimension for complexes (\cite{CHY}).
We recall their definition.

\begin{dfn}[\cite{CHY}]
Let $M \in \Db(\cA)$ be a non-zero object.
A {\em finite quasi-projective resolution} of $M$ is a bounded complex $P$ of projective objects for which there exist integers $l \le u$ and non-negative integers $a_l,\ldots,a_u$, not all zero, satisfying
\[
\sH_*(P)
\cong
\bigoplus_{i=l}^u \sH_*(M[i]^{\oplus a_i})
\]
as graded objects in $\cA$.

We define the {\em quasi-projective dimension} of $M$ by 
\[
\qpd_\cA^{\mathsf{CHY}}(M) := \inf \{\sup(P) - \hsup(P) \mid \mbox{$P$ is a finite quasi-projective resolution of $M$}\}.
\]
By convention, we set $\qpd_\cA^{\mathsf{CHY}}(0) = -\infty$.
\end{dfn}

Our invariant agrees with theirs for modules, since both recover the invariant of \cite{GJT}, but the two invariants differ for complexes.
For example, our invariant changes under shifts, whereas $\qpd_R^{\mathsf{CHY}}$ is shift-invariant.
The following example also shows that the Auslander--Buchsbaum formula in \cite[Theorem~4.4]{CHY} does not hold as stated.

\begin{eg}\label{eg:homology}
Let $R=k[[x,y]]$ be the formal power series ring over a field $k$, and consider complexes
\[
P=(0\longrightarrow R^2\xrightarrow{(x\ \ y)}R\longrightarrow0),\qquad M=k\oplus R[1],
\]
where $R^2$ is in degree one.
A direct computation gives $\sH_0(P)\cong k$ and $\sH_1(P)\cong R$, so $\sH_*(P)\cong\sH_*(M)$ as graded modules.
Thus $\qpd_R^{\mathsf{CHY}}(M)\leq\sup(P)-\hsup(P)=0$.
By definition, $\qpd_R^{\mathsf{CHY}}(M)$ is always non-negative (except the case of $M \simeq 0$).
Therefore, $\qpd_R^{\mathsf{CHY}}(M) = 0$.
In addition,
\[
\depth_R(M)=\min\{\depth_R(k),\depth_R(R[1])\}=\min\{0,1\}=0,
\]
while $\hsup(M)=1$ and $\depth(R)=2$.
Consequently,
\[
\qpd_R^{\mathsf{CHY}}(M)+\hsup(M) = 1<2=\depth(R) - \depth_R(M),
\]
so the stated Auslander--Buchsbaum formula fails for this example.

On the other hand, since $M$ is perfect, $\qpd_R(M) = \pd_R(M) = 2$ by \Cref{pd-qpd}.
Thus, this complex also shows that $\qpd_R(M)$ and $\qpd_R^{\mathsf{CHY}}(M) + \hsup(M)$ need not agree.
\end{eg}

\section{Auslander--Buchsbaum, derived depth, and derived width formulas}

This section develops the basic numerical consequences of finite quasi-projective dimension for complexes.
Using the filtration estimates from Section 2, we prove the Auslander-Buchsbaum formula, the derived depth formula, the dependency formula, and the derived width formula.

We first prove the Auslander--Buchsbaum formula for complexes with finite quasi-projective dimension, extending \cite[Theorem 4.4]{GJT} from modules to complexes.

\begin{thm}[Auslander--Buchsbaum formula]\label{AB}
Let $M\in\Dfb(R)$ and suppose that $\qpd_R(M)<\infty$.
Then
\[
\qpd_R(M)=\depth(R)-\depth_R(M).
\]
\end{thm}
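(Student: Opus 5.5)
We reduce to the classical Auslander--Buchsbaum formula for perfect complexes, $\pd_R(P)=\depth(R)-\depth_R(P)$ for non-zero $P\in\Pf(R)$ (see \cite{CFH}), together with \Cref{dwlem}(1). We may assume $M\not\simeq 0$.

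Let $\FF=(F^{t+1}\to\cdots\to F^s)$ be any finite quasi-projective filtration by $M$ with index interval $[s,t]$. By the preceding proposition we may take it inside $\Dfb(R)$. Then $F^s$ is a non-zero perfect complex, so the classical formula gives
\[
\pd_R(F^s)=\depth(R)-\depth_R(F^s).
\]
Since $M$ and $F^s$ are non-zero objects of $\Dfb(R)$, \Cref{dwlem}(1) applies and gives $\depth_R(F^s)=\depth_R(M)-t$. Substituting, we obtain
\[
\pd_R(F^s)-t=\depth(R)-\depth_R(F^s)-t=\depth(R)-\depth_R(M).
\]

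The right-hand side does not depend on the choice of $\FF$. Hence every finite quasi-projective filtration by $M$ yields the same value of $\pd_R(F^s)-t$. Because $\qpd_R(M)<\infty$, at least one such filtration exists, so the infimum defining $\qpd_R(M)$ is taken over a non-empty set of equal numbers. It therefore equals $\depth(R)-\depth_R(M)$.

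The only real input is \Cref{dwlem}(1), which rests on \Cref{bound}(2)(ii) applied to the cohomological functor $\Ext_R^*(k,-)$. For that application one needs $\sup\Ext^*_R(k,M)<\infty$, which holds because $M\not\simeq0$ is homologically finite, so $\depth_R(M)$ is finite. This is already established, so I expect no serious obstacle. The only care needed is the sign bookkeeping: $\depth_R(X)=-\hsup\rHom_R(k,X)$ is minus the supremum of $\Ext$-degrees, which is why the shift is $-t$ and not $-s$.
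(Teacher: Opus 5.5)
Your argument is correct and is the paper's proof, namely \Cref{dwlem}(1) combined with the Auslander--Buchsbaum formula for the perfect complex $F^s$. The only difference is that the paper picks a filtration attaining the infimum, whereas you note that every filtration gives the same value $\pd_R(F^s)-t$, which the paper records later as \Cref{key}(1).

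One correction to your closing remark. $\depth_R(X)=\inf\{n\mid\Ext^n_R(k,X)\neq0\}$ is the infimum of the $\Ext$-degrees, not minus their supremum. So \Cref{dwlem}(1) comes from \Cref{bound}(2)(i), or equivalently (1)(ii) for the homological functor $\sH_*(\rHom_R(k,-))$, and needs only $\depth_R(M)>-\infty$. The condition $\sup\Ext^*_R(k,M)<\infty$ that you invoke does not follow from homological finiteness: it forces finite injective dimension, so it fails e.g.\ for $M=R$ non-Gorenstein. Moreover, \Cref{bound}(2)(ii) would give a shift by $-s$, not $-t$.
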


\begin{proof}
Let $\FF=(F^{t+1}\to F^t\to\cdots\to F^{s+1}\to F^s)$ be a finite quasi-projective filtration by $M$ with $\qpd_R(M) = \pd_R(F^s) -t$.
\Cref{dwlem}(1) and the Auslander--Buchsbaum formula for perfect complexes (\cite[Corollary 16.4.2]{CFH}) give
\[
\qpd_R(M) = \pd_R(F^s)-t=\depth (R)-\depth_R(F^s)-t=\depth (R)-\depth_R(M).
\]
\end{proof}

For modules, the depth formulas for finite quasi-projective dimension have been established in several forms; see \cite[Theorem~4.11]{GJT}, \cite[Theorem~3.7]{JPMMR}, and \cite[Theorem~3.5]{FL}.
The following theorem simultaneously generalizes these results.

\begin{thm}[Derived depth formula]\label{DDF}
Let $M,N\in\Dfb(R)$, and suppose that $\qpd_R(M)<\infty$.
If $M\ltensor_RN\in\Dfb(R)$, then
\[
\depth_R(M\ltensor_RN)=\depth_R(M)+\depth_R(N)-\depth (R).
\]
\end{thm}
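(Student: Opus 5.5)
Take a finite quasi-projective filtration $\FF=(F^{t+1}\to F^t\to\cdots\to F^s)$ by $M$. We may assume $M,N\not\simeq 0$. Since $F^s$ is perfect, this is possible by the finiteness assumption on $\qpd_R(M)$ (the filtration need not realize the infimum). Apply the exact functor $-\ltensor_R N$ to $\FF$. This gives a finite filtration
\[
\FF\ltensor_R N=(F^{t+1}\ltensor_RN\to F^t\ltensor_RN\to\cdots\to F^s\ltensor_RN)
\]
of $F^s\ltensor_R N$ by $M\ltensor_R N$, with the same index interval $[s,t]$. Its successive cones are $(M\ltensor_RN)[i]^{\oplus a_i}$, and $a_s,a_t>0$ is preserved.

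We now compute depth in two ways. First, $M\ltensor_RN\in\Dfb(R)$ by hypothesis, and $F^s\ltensor_RN\in\Dfb(R)$ because $F^s$ is perfect. If $M\ltensor_RN\simeq0$, then $F^s\ltensor_RN\simeq0$ by \Cref{thick}. But $F^s\ltensor_R N\not\simeq 0$, since the tensor product of nonzero finite complexes over a local ring is nonzero (Nakayama). So $M\ltensor_RN\not\simeq0$. \Cref{dwlem}(1) then gives
\[
\depth_R(F^s\ltensor_RN)=\depth_R(M\ltensor_RN)-t.
\]
Second, the derived depth formula for perfect complexes (the Auslander--Buchsbaum formula for $\Pf(R)$ tensored with $N$; see \cite{CFH}, or derive it from $F^s\ltensor_RN\simeq \rHom_R(\rHom_R(F^s,R),N)$ and $\depth \rHom_R(X,N)$ for perfect $X$) gives
\[
\depth_R(F^s\ltensor_RN)=\depth_R(F^s)+\depth_R(N)-\depth(R).
\]
Applying \Cref{dwlem}(1) to $\FF$ itself gives $\depth_R(F^s)=\depth_R(M)-t$.

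Combining the three equalities, $\depth_R(M\ltensor_RN)-t=\depth_R(M)-t+\depth_R(N)-\depth(R)$, and cancelling $t$ yields the formula.

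The main obstacle is making sure the cited depth formula for perfect complexes is available in the needed generality, i.e.\ $N$ an arbitrary complex in $\Dfb(R)$. This is standard (Foxby/Iyengar, the derived depth formula when one factor is perfect), and I would cite it from \cite{CFH}. Failing that, I would prove it by induction on the length of a minimal free resolution of $F^s$, using Koszul-type arguments on $\rHom_R(k,-)$, since $\rHom_R(k,F^s\ltensor_RN)\simeq\rHom_R(k,N)\ltensor_RF^s$ for perfect $F^s$ and the minimality of $F^s$ controls the top degree.
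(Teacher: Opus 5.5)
Your proposal is correct and is essentially the paper's proof: tensor a finite quasi-projective filtration by $M$ with $N$, apply \Cref{dwlem}(1) to both $\FF$ and $\FF\ltensor_R N$, and combine with the derived depth formula for perfect complexes \cite[Theorem 16.4.3]{CFH}. Your explicit check that $M\ltensor_R N\not\simeq 0$ is needed for the nonzero hypothesis in \Cref{dwlem}, and it is a detail the paper leaves implicit.
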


\begin{proof}
Choose a finite quasi-projective filtration $\FF=(F^{t+1}\to F^t\to\cdots\to F^{s+1}\to F^s)$ by $M$.
Applying $-\ltensor_RN$ yields a finite filtration of $F^s\ltensor_RN$ by $M\ltensor_RN$ with the same index interval $[s,t]$.
Then \Cref{dwlem}(1) and the derived depth formula for perfect complexes (\cite[Theorem 16.4.3]{CFH}) give
\begin{align*}
\depth_R(M\ltensor_RN) = \depth_R(F^s\ltensor_RN) + t &= \depth_R(F^s) + \depth_R(N) - \depth(R) + t \\
&= \depth_R(M) + \depth_R(N) - \depth(R).
\end{align*}
\end{proof}

The following version is a direct consequence of the derived depth formula; see the proof of \cite[Corollary 2.4]{Iye}.

\begin{cor}[Auslander's depth formula]
Let $M,N\in\Dfb(R)$, and suppose that $\qpd_R(M)<\infty$.
Assume $M\ltensor_RN\in\Dfb(R)$ and set $q = \hsup(M \ltensor_R N)$.
If $q = 0$ or $\depth_R(\Tor_q^R(M,N)) \le 1$, then
\[
\depth_R(\Tor_q^R(M,N)) - q =\depth_R(M)+\depth_R(N)-\depth (R).
\]
\end{cor}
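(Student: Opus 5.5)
The plan is to deduce the statement from the derived depth formula (\Cref{DDF}) together with the standard computation of depth for a complex via its top homology, following the argument in the proof of \cite[Corollary 2.4]{Iye}.

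Put $X = M\ltensor_R N$ and $q = \hsup(X)$. We may assume $X \not\simeq 0$, since otherwise $q=-\infty$ and there is nothing to prove. The key input is the general inequality for $X\in\Dfb(R)$ with $q=\hsup(X)$:
\[
\depth_R(X) \le \depth_R(\sH_q(X)) - q,
\]
with equality when $\depth_R(\sH_q(X))\le 1$. This should follow from the exact triangle $\tau_{\le q-1}X \to X \to \sH_q(X)[q]\to$ (good truncation), after applying $\rHom_R(k,-)$. The complex $\tau_{\le q-1}X$ has homology concentrated in degrees $\le q-1$, so $\Ext^i_R(k,\tau_{\le q-1}X)=0$ for $i< -(q-1)$, i.e.\ $\depth_R(\tau_{\le q-1}X)\ge -(q-1)=1-q$.

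Write $d=\depth_R(\sH_q(X))$, so that $\depth_R(\sH_q(X)[q]) = d-q$. From the long exact sequence:
\begin{itemize}
\item if $d-q < 1-q$, i.e.\ $d\le 0$ (so $d=0$), then $\depth_R(X)=d-q$;
\item if $d=1$, then $\depth_R(X)\ge \min\{1-q,\,1-q\}=1-q$, and the bottom Ext of $\sH_q(X)[q]$ in degree $1-q$ injects. Indeed, the relevant map from $\Ext^{1-q}_R(k,\sH_q(X)[q])$ goes to $\Ext^{2-q}_R(k,\tau_{\le q-1}X)$, but we need injectivity of $\Ext^{1-q}(k,X)\to\Ext^{1-q}(k,\sH_q[q])$ to be surjective. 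The preceding term $\Ext^{2-q}(k,\tau_{\le q-1}X)$ need not vanish, so we instead use the dual degree: the term before is $\Ext^{1-q}(k,\tau_{\le q-1}X)$. So care is required.
\end{itemize}

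This degree bookkeeping in the case $d=1$ is the main obstacle. I would handle it by using the direct characterization: $\Ext^{-q+j}_R(k,X)$ for the smallest relevant $j$ equals $\Hom_R(k,\sH_q(X))$ when $j=0$. In particular, $\Ext^{-q}_R(k,X)\cong\Hom_R(k,\sH_q(X))$ always holds, since $X$ has no homology above $q$. If $d\ge1$ this vanishes, so $\depth_R(X)\ge 1-q$.

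For $d=1$, I would compute $\Ext^{1-q}_R(k,X)$ via the sequence
\[
0\to \Ext^{1-q}(k,\sH_q[q])\to \Ext^{1-q}(k,\sH_q[q])\ldots
\]
More cleanly, apply $\rHom_R(k,-)$ to the triangle and note that $\Ext^{1-q}_R(k,\tau_{\le q-1}X)\cong\Hom_R(k,\sH_{q-1}(X))$ sits before $\Ext^{1-q}_R(k,X)\to \Ext^1_R(k,\sH_q(X))\to \Ext^{2-q}_R(k,\tau_{\le q-1}X)$. Injectivity fails in general, which is exactly why the hypothesis is only $\depth\le1$ and the conclusion is an equality rather than a one-sided inequality. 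I would therefore follow Iyengar's argument of recognizing that the depth formula itself, combined with $\depth_R(X)\le d-q$, forces equality.

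Concretely, \Cref{DDF} gives $\depth_R(X)=\depth_R(M)+\depth_R(N)-\depth(R)$, and the required statement is $d-q=\depth_R(X)$. The case $q=0$ for modules-in-degree-zero situations, and the case $d=0$, are immediate from the inequality with equality. For $d=1$, it suffices to show $\depth_R(X)\ne 1-q$ is impossible, i.e.\ $\depth_R(X)\ge 1-q$, which was shown above, together with $\depth_R(X)\le d-q=1-q$. Hence equality holds in all the stated cases, and substituting into \Cref{DDF} yields the formula.
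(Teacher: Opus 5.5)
Your route is the one the paper intends: it simply cites the proof of \cite[Corollary 2.4]{Iye}, i.e.\ compute $\depth_R(M\ltensor_RN)$ from the top homology and then apply \Cref{DDF}. However, the step carrying the content, $\depth_R(X)=d-q$ when $d=1$, is not established.

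Your truncation triangle is oriented the wrong way. In homological indexing $\tau_{\ge q}X\simeq\sH_q(X)[q]$ is a \emph{sub}complex of $X$, so the triangle is $\sH_q(X)[q]\to X\to\tau_{\le q-1}X\to\sH_q(X)[q+1]$. This is why your degree bookkeeping for $d=1$ collapses.

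You then get the upper bound $\depth_R(X)\le 1-q$ from the ``general inequality'' $\depth_R(X)\le\depth_R(\sH_q(X))-q$. You never prove it, and it is false for arbitrary complexes. Over $R=k[[x,y,z]]$, take $X=(0\to R^3\xrightarrow{(x\ y\ z)}R\to0)$ with $R^3$ in degree $0$. Then $\pd_R(X)=0$, so $\depth_R(X)=3$ by Auslander--Buchsbaum, whereas $\sH_0(X)=\Ker(x\ y\ z)$ is the second syzygy of $k$ and has depth $2$. Saying that \Cref{DDF} ``forces equality'' does not help, since it carries no information about $\sH_q(X)$.

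With the correct triangle the case is immediate. Since $\tau_{\le q-1}X$ has no homology above degree $q-1$, one has $\Ext^i_R(k,\tau_{\le q-1}X)=0$ for $i\le -q$. The long exact sequence then gives $\Ext^{-q}_R(k,X)\cong\Hom_R(k,\sH_q(X))$ and an injection $\Ext^1_R(k,\sH_q(X))\hookrightarrow\Ext^{1-q}_R(k,X)$. Together with $\depth_R(X)\ge -q$, this yields $\depth_R(X)=d-q$ for $d\le 1$, and \Cref{DDF} finishes.

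You treat the clause $q=0$ only when $M\ltensor_RN$ is concentrated in degree $0$. That restriction cannot be dropped, because for complexes this part of the statement is false as written. With $R$ and $X$ as above, take $M=R$ and $N=X$. Then $\qpd_R(M)=0$, $q=\hsup(X)=0$ and $\depth_R(\Tor_0^R(M,N))=2$, while $\depth_R(M)+\depth_R(N)-\depth(R)=3$.

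The $q=0$ case is correct when $M\ltensor_RN$ has homology only in degree $q$. This holds, e.g., when $M$ and $N$ are modules, as in \cite[Corollary 2.4]{Iye}, since then $\hinf(M\ltensor_RN)\ge 0$. In that situation $\depth_R(M\ltensor_RN)=\depth_R(\Tor_q^R(M,N))-q$ trivially, and \Cref{DDF} gives the formula. For general complexes only the case $\depth_R(\Tor_q^R(M,N))\le 1$ goes through.
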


As in the module case, the derived depth formula yields Jorgensen's dependency formula below.
Its proof is the same as that of \cite[Theorem 3.6]{FL}.

\begin{thm}[Dependency formula]\label{dep}
Let $M,N\in\Dfb(R)$, and suppose that $\qpd_R(M)<\infty$ and $M\ltensor_RN\in\Dfb(R)$.
Then
\[
\hsup(M\ltensor_RN)=\sup\{\depth (R_\fp)-\depth_{R_\fp}(M_\fp) - \depth_{R_\fp}(N_\fp) \mid \fp\in\Supp_RM\cap\Supp_RN\}.
\]
\end{thm}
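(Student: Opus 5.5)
Set $q=\hsup(M\ltensor_RN)$. I will establish the two inequalities separately, following the pattern of \cite[Theorem 3.6]{FL}. The inputs are the derived depth formula (\Cref{DDF}), applied at every localization, and the standard characterization of the homological supremum of a homologically finite complex via primes:
\[
\hsup(X)=\sup\{-\depth_{R_\fp}(X_\fp)\mid \fp\in\Supp_R X\},
\]
valid for $X\in\Dfb(R)$ (see \cite{CFH}). The key point is that $\depth_{R_\fp}(X_\fp)\ge -\hsup(X_\fp)\ge -\hsup(X)$ for every $\fp$. Equality is attained at a prime $\fp$ minimal in $\Supp_R\sH_q(X)$: there $\sH_q(X)_\fp$ has finite length, so $\depth_{R_\fp}(X_\fp)=-q$.

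For each prime $\fp\in\Supp_RM\cap\Supp_RN$, I first localize. By \Cref{proploc}(2), $\qpd_{R_\fp}(M_\fp)\le\qpd_R(M)<\infty$. Moreover $(M\ltensor_RN)_\fp\simeq M_\fp\ltensor_{R_\fp}N_\fp$ is homologically finite. Hence \Cref{DDF} over $R_\fp$ gives
\[
-\depth_{R_\fp}((M\ltensor_RN)_\fp)=\depth(R_\fp)-\depth_{R_\fp}(M_\fp)-\depth_{R_\fp}(N_\fp).
\]
If $(M\ltensor_R N)_\fp\simeq0$, the left side is $-\infty$. But $M_\fp$ and $N_\fp$ are nonzero with finite depth, so the right side is finite. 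Therefore $\Supp_R(M\ltensor_RN)=\Supp_RM\cap\Supp_RN$. This support identity is also the standard fact that the support of a derived tensor product of homologically finite complexes over a local ring is the intersection of the supports, so it need not be derived this way.

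Combining the displayed identity with the characterization of $\hsup$ gives the equality of the two suprema over the same index set. The inequality $\le$ in the statement follows by taking $\fp$ minimal in $\Supp_R\sH_q(M\ltensor_RN)$. The inequality $\ge$ follows from $-\depth_{R_\fp}(X_\fp)\le\hsup(X_\fp)\le\hsup(X)$.

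The main obstacle is only to cite or verify the prime characterization of $\hsup$ for complexes. I would verify it directly as follows. $\depth_{R_\fp}$ of a complex $X_\fp$ with $\hsup(X_\fp)=q$ is at least $-q$. When $\fp$ is minimal in the support of $\sH_q$, the top homology has finite length, which makes $\Ext^{-q}_{R_\fp}(k(\fp),X_\fp)\cong\Hom(k(\fp),\sH_q(X_\fp))\neq0$. This is elementary, so the proof reduces essentially to the localized derived depth formula.
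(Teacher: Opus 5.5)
Your proof is correct and is essentially the argument the paper intends: the paper simply defers to the proof of \cite[Theorem 3.6]{FL}. That proof likewise localizes the derived depth formula, using \Cref{proploc}(2) to keep $\qpd$ finite, and combines it with the characterization $\hsup(X)=\sup\{-\depth_{R_\fp}(X_\fp)\mid \fp\in\Supp_R X\}$ and the identity $\Supp_R(M\ltensor_RN)=\Supp_RM\cap\Supp_RN$. Your self-contained check of that characterization, via a prime minimal in $\Supp_R\sH_q(X)$, is sound. The degenerate case $M\ltensor_RN\simeq0$ is also covered, since then the index set is empty and both sides equal $-\infty$.
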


As a counterpart to the derived depth formula, we obtain the following derived width formula.

\begin{thm}[Derived width formula]\label{DWF}
Let $M,N\in\Dfb(R)$, and suppose that $\qpd_R(M)<\infty$.
If $\rHom_R(M,N)\in\Dfb(R)$, then
\[
\width_R(\rHom_R(M,N))=\depth_R(M) + \width_R(N) - \depth(R).
\]
\end{thm}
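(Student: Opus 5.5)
We follow the proof of \Cref{DDF}, with $\rHom_R(-,N)$ in place of $-\ltensor_RN$ and width in place of depth. Choose a finite quasi-projective filtration $\FF=(F^{t+1}\to F^t\to\cdots\to F^{s+1}\to F^s)$ by $M$, with $\Cone(F^{i+1}\to F^i)\simeq M[i]^{\oplus a_i}$. The functor $\rHom_R(-,N)\colon \Dfb(R)\to\D(R)$ is exact and contravariant. By the Remark following the definition of finite filtrations, it therefore sends $\FF$ to a finite cofiltration
\[
\GG=(\rHom_R(F^s,N)\to\rHom_R(F^{s+1},N)\to\cdots\to\rHom_R(F^{t+1},N)=0)
\]
of $\rHom_R(F^s,N)$ by $\rHom_R(M,N)$, with index interval $[s,t]$. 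For the indexing, note that $\rHom_R(M[i],N)\simeq\rHom_R(M,N)[-i]$ and that the cocone of $\rHom_R(F^i,N)\to\rHom_R(F^{i+1},N)$ is $\rHom_R(M[i]^{\oplus a_i},N)\simeq \rHom_R(M,N)[-i]^{\oplus a_i}$. This matches the required shape $\coCone(G_i\to G_{i+1})\cong \rHom_R(M,N)[-i]^{\oplus a_i}$, with $a_s,a_t>0$.

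Next we apply \Cref{dwlem}(2), which gives
\[
\width_R(\rHom_R(F^s,N))=\width_R(\rHom_R(M,N))-t.
\]
The hypotheses need checking. The object $\rHom_R(M,N)$ lies in $\Dfb(R)$ by assumption, and $\rHom_R(F^s,N)\in\Dfb(R)$ because $F^s$ is perfect and $N\in\Dfb(R)$. The argument also needs $\rHom_R(M,N)\not\simeq0$. If it were acyclic, \Cref{thick} would force $\rHom_R(F^s,N)\simeq 0$. In that case the formula should be read with the convention that both sides are degenerate, or the case should be excluded as in the paper's convention. Strictly, \Cref{dwlem}(2) only uses $\inf(\Tor_*^R(k,\rHom_R(M,N)))>-\infty$, which holds since $\rHom_R(M,N)\in\Dfb(R)$ by Nakayama's lemma for complexes, together with \Cref{bound2}(1)(i).

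It remains to compute the width of $\rHom_R(F^s,N)$ for the perfect complex $P=F^s$. Since $P$ is perfect, $\rHom_R(P,N)\simeq \rHom_R(P,R)\ltensor_RN$. Tensor evaluation also gives $k\ltensor_R\rHom_R(P,N)\simeq \rHom_R(P,R)\ltensor_R(k\ltensor_RN)$, which is a complex of $k$-vector spaces. Using $\rHom_R(P,R)\ltensor_R k\simeq\rHom_k(P\ltensor_R k,k)$, whose homology has $\hinf=-\hsup(k\ltensor_RP)=-\pd_R(P)$ because $P$ is minimal and local, we obtain
\[
\width_R(\rHom_R(P,N))=\width_R(N)-\pd_R(P).
\]
This is the known width formula for perfect complexes (cf.\ \cite{CFH}). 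The Auslander--Buchsbaum formula for perfect complexes then rewrites it as $\width_R(N)+\depth_R(P)-\depth(R)$.

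Combining the two displays, and using $\depth_R(F^s)=\depth_R(M)-t$ from \Cref{dwlem}(1), we get
\[
\width_R(\rHom_R(M,N))=\width_R(N)+\depth_R(F^s)-\depth(R)+t=\depth_R(M)+\width_R(N)-\depth(R).
\]
The main point requiring care is the index bookkeeping for the contravariant functor: one must confirm that the resulting cofiltration has index interval exactly $[s,t]$, not $[-t,-s]$, so that \Cref{dwlem}(2) yields the shift by $t$ needed to cancel the $t$ from \Cref{dwlem}(1).
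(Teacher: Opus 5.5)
Your proposal is correct and follows the paper's proof: apply $\rHom_R(-,N)$ to a finite quasi-projective filtration to get a cofiltration of $\rHom_R(F^s,N)$ by $\rHom_R(M,N)$ with index interval $[s,t]$, then combine \Cref{dwlem}(1),(2) with the width formula for perfect complexes. The only difference is that the paper cites \cite[Theorem 16.4.4]{CFH} for the perfect case, whereas you derive it from tensor evaluation and the Auslander--Buchsbaum formula; also, your concern about $\rHom_R(M,N)\simeq 0$ does not arise for non-zero $M,N$, since then $\depth_R\rHom_R(M,N)=\width_R(M)+\depth_R(N)<\infty$.
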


\begin{proof}
Choose a finite quasi-projective filtration $\FF=(F^{t+1}\to F^t\to\cdots\to F^{s+1}\to F^s)$ by $M$.
Applying $\rHom_R(-,N)$ yields a finite cofiltration of $\rHom_R(F^s,N)$ by $\rHom_R(M,N)$ with the same index interval $[s,t]$.
Then \Cref{dwlem}(1),(2) and the derived width formula for perfect complexes (\cite[Theorem 16.4.4]{CFH}) yield
\begin{align*}
\width_R(\rHom_R(M,N)) = \width_R(\rHom_R(F^s,N)) + t &= \depth_R(F^s) + \width_R(N) - \depth(R) + t\\
&= \depth_R(M) + \width_R(N) - \depth(R).
\end{align*}
\end{proof}

Finally, we prove a width counterpart that encompasses both Auslander's depth formula and the dependency formula.

\begin{cor}
Let $M,N\in\Dfb(R)$, and suppose that $\qpd_R(M)<\infty$.
Assume $\rHom_R(M,N)\in\Dfb(R)$ and set $p := -\hinf(\rHom_R(M,N))$.
Then
\[
p = \depth(R) - \depth_R(M) - \width_R(N). 
\]
\end{cor}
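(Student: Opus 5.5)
This is the width counterpart of the Auslander depth formula, so I would follow the proof of \cite[Corollary 2.4]{Iye} in dual form, using the derived width formula (\Cref{DWF}) in place of the derived depth formula. Set $X := \rHom_R(M,N)$. By \Cref{DWF},
\[
\width_R(X) = \depth_R(M) + \width_R(N) - \depth(R).
\]
So the claim is equivalent to $\width_R(X) = \hinf(X) = -p$. I would first dispose of the degenerate case $X \simeq 0$, where both sides should be read as $\infty$. If one wants more than the convention, note that $\width_R(X)=\infty$ and \Cref{DWF} then force $\width_R(N)=\infty$, i.e. $N\simeq 0$. After that I assume $X \not\simeq 0$.

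The key step is the standard fact that for $0 \not\simeq X \in \Dfb(R)$ one has $\width_R(X) = \hinf(X)$. I would prove it as follows. Take a minimal free resolution, or simply use the good truncation triangle $\tau_{\ge h+1}X \to X \to \sH_h(X)[h]\to$ with $h = \hinf(X)$, and tensor with $k$. The first term contributes nothing to $\Tor^R_{h}(k,-)$ and below, since $k \ltensor_R \tau_{\ge h+1}X$ has homology only in degrees $\ge h+1$. Hence $\sH_h(k \ltensor_R X) \cong k \otimes_R \sH_h(X)$, and there is no homology below degree $h$. Because $\sH_h(X)$ is a nonzero finitely generated module, Nakayama's lemma gives $k \otimes_R \sH_h(X) \neq 0$. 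Therefore $\width_R(X) = h = \hinf(X) = -p$, which yields the stated formula.

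The only step requiring care is this width $=$ hinf identity, which depends on finite generation (Nakayama); it holds because $X\in\Dfb(R)$ by hypothesis. Everything else is a rearrangement of \Cref{DWF}. The formula can be read as a Hom-analogue of the dependency formula: it computes the top nonvanishing $\Ext$ degree $p = \sup\{n \mid \Ext^n_R(M,N)\neq 0\}$.
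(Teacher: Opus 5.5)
Your proposal is correct and follows the same route as the paper: apply the derived width formula (\Cref{DWF}) to $X=\rHom_R(M,N)$ and use $\width_R(X)=\hinf(X)$ for $0\not\simeq X\in\Dfb(R)$. The only difference is that the paper cites \cite[Proposition 16.2.5]{CFH} for that identity, whereas you prove it directly from the truncation triangle and Nakayama's lemma, which is a valid argument.
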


\begin{proof}
It follows from \cite[Proposition 16.2.5]{CFH} that $\width_R(\rHom_R(M,N)) = \hinf(\rHom_R(M,N)) = -p$.
Therefore, the derived width formula (\Cref{DWF}) shows
\[
-p = \depth_R(M) + \width_R(N) - \depth(R).
\]
\end{proof}

\section{Quasi-projective dimension and complete intersections}

As pointed out by Gheibi--Jorgensen--Takahashi (\cite{GJT}), modules of finite quasi-projective dimension are expected to behave similarly to modules over complete intersection local rings, or more generally, to modules of finite complete intersection dimension. 
For example, the derived depth formula is known for complexes of finite complete intersection dimension; see, for instance, \cite{CJ2} and the references therein.
Moreover, Gheibi--Jorgensen--Takahashi (\cite[Corollary 3.8]{GJT}) proved that over a complete intersection local ring that is a deformation of a regular local ring, every finitely generated module has finite quasi-projective dimension. 

In this section, we extend this picture to homologically finite complexes.
We first prove that over a complete intersection local ring with infinite residue field that is a deformation of a regular local ring, every object of $\Dfb(R)$ has finite quasi-projective dimension. 
More precisely, we prove the stronger result that the difference between quasi-projective length and quasi-projective dimension is bounded by the complexity. 
We then establish a new intersection theorem for complexes of finite quasi-projective dimension and use it, together with the derived depth formula, to obtain a result analogous to the theorem of Celikbas--Piepmeyer (\cite{CP}).

We begin by recalling the definitions of complete intersection dimension and complexity for complexes.

\begin{dfn}[{\cite{AGP,SW}}]
Let $M \in \Dfb(R)$.
\begin{enumerate}[\rm(1)]
\item	
A {\em quasi-deformation} of $R$ is a diagram of local homomorphisms $R \to R' \gets Q$ such that $R \to R'$ is flat and $Q \to R'$ is surjective with kernel generated by a $Q$-regular sequence.
The {\em complete intersection dimension} of $M$ is 
\[
\CIdim_R(M) := \inf\{\pd_Q(M \ltensor_R R') - \pd_Q(R') \mid \mbox{$R \to R' \gets Q$ is a quasi-deformation}\}.
\]
\item
The {\em complexity} of $M$ is defined by
\[
\cx_R(M) := \inf\{c \in \NN \mid \mbox{there exists $\alpha \in \RR$ such that $\dim_k(\Ext_R^n(M,k)) \le \alpha n^{c-1}$ for $n \gg 0$}\}.
\]
\end{enumerate}
\end{dfn}

\begin{rmk}
If $M$ has finite complete intersection dimension, then $M$ has finite complexity.
Moreover, $R$ is a complete intersection if and only if every $M\in\Dfb(R)$ has finite complete intersection dimension; see \cite[Proposition~3.5]{SW}.
We refer to \cite{AGP, SW} for details on complete intersection dimension.
\end{rmk}

We now prove the first main theorem of this section.

\begin{thm}\label{cxbound}
Let $R$ be a quotient of a noetherian local ring $Q$ with infinite residue field by a $Q$-regular sequence $x_1,\ldots,x_c$. 
For every non-zero object $M \in \Dfb(R)$ with $\pd_Q(M) < \infty$, one has $\qpd_R(M) < \infty$ and $\qpl_R(M) -\qpd_R(M) \le \cx_R(M)$.

In particular, if $Q$ is regular, then every non-zero object $M \in \Dfb(R)$ has finite quasi-projective dimension.
\end{thm}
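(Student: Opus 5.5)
We follow the strategy of \cite[Corollary 3.8]{GJT}, carried out at the level of the derived category. The engine is a Koszul-type filtration: for a single element $x$ that is regular on $Q$, put $Q'=Q/(x)$ and let $X\in\Dfb(Q')$. Then $X\ltensor_{Q}Q'$ sits in an exact triangle
\[
X[1]\to X\ltensor_Q Q'\to X\to X[2].
\]
This comes from computing $Q'\ltensor_Q Q'\simeq Q'\oplus Q'[1]$, or equivalently from the standard triangle obtained by resolving $Q'$ by $0\to Q\xrightarrow{x}Q\to0$ and tensoring with $X$. The triangle gives a filtration $(0\to X[1]\to X\ltensor_QQ')$ of $X\ltensor_Q Q'$ by $X$ with index interval $[0,1]$. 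Moreover $\pd_{Q'}(X\ltensor_Q Q')=\pd_Q(X)$, so the middle term is perfect over $Q'$ whenever $X$ is perfect over $Q$.

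For a regular sequence of length $c$ this should be iterated. The cleanest way is to write $K=K(x_1,\dots,x_c;Q)$, so that $K\simeq R$ over $Q$, and consider $M\ltensor_Q R$. This is the $R$-complex $M\otimes_R(R\otimes_Q K)$, and $R\otimes_QK$ is the exterior algebra complex on $c$ generators with zero differential. The brutal (degree) filtration of $K$ then gives a filtration of $M\ltensor_Q R$ by $M$ with index interval $[0,c]$ and multiplicities $a_i=\binom{c}{i}$. The object $F^0=M\ltensor_QR$ is a nonzero perfect $R$-complex with $\pd_R(F^0)=\pd_Q(M)$, since for a perfect $Q$-complex $P$ we have $\pd_R(P\otimes_QR)=\pd_Q(P)$. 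This already shows $\qpd_R(M)\le \pd_Q(M)-c<\infty$. Each step of this filtration must be realized by honest triangles in $\D(R)$, which follows by filtering the $R$-free dg module $R\otimes_QK$ by degree.

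To get the sharper bound $\qpl_R(M)-\qpd_R(M)\le\cx_R(M)$ we must shorten the range of the filtration to $\cx_R(M)$, and this is where the infinite residue field enters. By Avramov's theory of support varieties / Eisenbud operators, we first reduce to $Q$ complete, or pass to a faithfully flat extension, checking that $\qpd$, $\qpl$ and $\cx$ are unchanged. Since $k$ is infinite, we can then change the regular sequence by a general invertible linear change of coordinates. This makes $x_{r+1},\dots,x_c$ (with $r=\cx_R(M)$) such that $M$ has finite projective dimension over $Q_1=Q/(x_{1},\dots,x_{c-r})$ \emph{after reordering}, i.e.\ $M$ is perfect over the intermediate ring $Q/(x_{r+1},\dots,x_c)$ (the standard fact that $\cx_R(M)=\dim V_R(M)$ and that a generic linear subspace avoiding the variety gives an intermediate hypersurface chain over which $M$ is perfect; cf.\ \cite{AGP} and Bergh's reducing complexity). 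We then apply the Koszul filtration over $Q_1\to R$, a regular sequence of length $r$, to obtain a filtration with index interval $[0,r]$ whose top object is perfect over $R$. This yields $\qpl_R(M)\le\pd_{Q_1}(M)$ and $\qpd_R(M)\le \pd_{Q_1}(M)-r$. For the inequality we need, we must know that $\qpd$ is realized by this very filtration. For that we use \Cref{AB}: $\qpd_R(M)=\depth R-\depth_R M$, together with Auslander--Buchsbaum over $Q_1$, which gives $\pd_{Q_1}(M)-r=\depth Q_1-r-\depth M=\depth R-\depth M$. Hence this filtration attains $\qpd_R(M)$, and $\qpl_R(M)-\qpd_R(M)\le r$.

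The main obstacle is the step producing the intermediate ring $Q_1$ over which $M$ is perfect with exactly $c-r$ relations cut out. For complexes rather than modules, this needs the support-variety description of complexity for objects of $\Dfb(R)$ with $\pd_Q(M)<\infty$, including the existence of generic sequences, which is where infinite residue field is used. If this fails, a fallback is to induct on $c$ using single-element reductions, choosing a general $x$ in $(x_1,\dots,x_c)/\fm(x_1,\ldots,x_c)$ that lowers complexity. Finally, when $Q$ is regular, $\pd_Q(M)<\infty$ holds automatically, which gives the last assertion.
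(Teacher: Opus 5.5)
\paragraph{The filtration is built on the wrong object.} Your overall plan is the paper's: reduce via \cite[Theorem~5.9]{AGP} to the case where the number of relations is $\cx_R(M)$, filter $M\ltensor_QR$ by $M$ with multiplicities $\binom{c}{i}$, and use \Cref{AB} to see that this filtration computes $\qpd_R(M)$. However, the construction of the filtration, which is the heart of the theorem, does not work as you wrote it.

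Since $x_iM=0$, the differential of $M\otimes_QK$ is just $d_M\otimes 1$. So the $R$-complex $M\otimes_R(R\otimes_QK)$, with $R$ acting through $M$, is the split complex $\bigoplus_{i=0}^cM[i]^{\oplus\binom ci}$. Its degree filtration is trivial, and it is perfect over $R$ only if $M$ is. The same happens in your $c=1$ paragraph: tensoring $X$ with $0\to Q\xrightarrow{x}Q\to0$ gives $\Cone(X\xrightarrow{0}X)\cong X\oplus X[1]$.

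The perfect object is $P\otimes_QR$ for a $Q$-free resolution $P\simeq M$, with $R$ acting through the second factor. It agrees with $M\otimes_QK$ in $\D(Q)$ but not in $\D(R)$. For $Q=k[[x]]$, $R=Q/(x^2)$, $M=k$, compare the perfect complex $(R\xrightarrow{x}R)$ with $k\oplus k[1]$. Your degree filtration does not live on $P\otimes_QR$, so "honest triangles in $\D(R)$" are not produced.

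The paper avoids this with a Cartan--Eilenberg resolution $P_{\bullet,\bullet}$ of $M$ over $Q$ and vertical good truncations of $P_{\bullet,\bullet}\otimes_QR$. The graded pieces are the vertical homology complexes $\Tor^Q_q(R,M_\bullet)\cong M^{\oplus\binom cq}$, and on these the two $R$-actions coincide.

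Your dg idea can also be repaired. View $M\otimes_QK$ as a dg $K$-module through the $K$-factor. The $M\otimes_QK_{\ge i}$ are then dg $K$-submodules whose subquotients $M[i]^{\oplus\binom ci}$ are restricted along $K\to R$. The equivalence $-\ltensor_KR\colon\D(K)\to\D(R)$ sends $M\otimes_QK\simeq P\otimes_QK$ to the perfect complex $P\otimes_QR$. Either argument has to be supplied.

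\paragraph{The reduction to $\cx_R(M)$ relations for a complex.} This is the step you flag and leave open. The paper needs no support varieties for complexes here. A stupid truncation of a free resolution gives an exact triangle $P\to M\to A[m]\to P[1]$ with $P\in\Pf(R)$ and $A\in\mod R$. Hence $\cx_R(M)=\cx_R(A)$ and $\pd_Q(A)<\infty$. Applying \cite[Theorem~5.9]{AGP} to $A$ gives a factorization $Q\to Q'\to R$ with $\pd_{Q'}(A)<\infty$ and $\pd_{Q'}(R)=\cx_R(M)$. Then $\pd_{Q'}(M)<\infty$ because $P$ is perfect over $R$, and $R$ is perfect over $Q'$.

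You should also drop the passage to $\widehat Q$ or a faithfully flat extension. It is unnecessary, and nothing available lets you descend finiteness of $\qpd$, or a bound on $\qpl$, from $\widehat R$ back to $R$. Your closing Auslander--Buchsbaum computation is the same as the paper's and is fine.
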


\begin{proof}
By \cite[Theorem 5.9]{AGP}, $Q \twoheadrightarrow R$ factors as $Q \to Q' \to R$ such that the kernel of $Q' \to R$ is generated by a regular sequence, $\pd_{Q'}(M) < \infty$, and $\cx_R(M) = \pd_{Q'}(R)$.
Indeed, by taking a stupid truncation of a free resolution of $M$, we obtain an exact triangle $P \to M \to A[m] \to P[1]$ for some perfect complex $P$ and a finitely generated $R$-module $A$.
Since $\cx_R(M) = \cx_R (A)$, we can take such $Q'$ by applying the theorem to $A$.
Thus, we may replace $Q$ by $Q'$ and prove that $\qpd_R(M)< \infty$ and $\qpl_R(M) - \qpd_R(M) \le c$.

By replacing $M$ with an isomorphic complex, we may assume that $M$ is a bounded complex of finitely generated $R$-modules.
Take a left Cartan--Eilenberg resolution $P_{\mdot,\mdot}$ of $M$ over $Q$; see \cite[Section~5.7]{Wei}.
We may choose it so that $P_{p,\mdot}=0$ whenever $M_p=0$.
In particular, $P_{p,\mdot}=0$ for $|p|\gg0$.
Moreover, for each $p$, the complex $P_{p,\mdot}$ is a projective resolution of $M_p$, and $\Tot(P_{\mdot,\mdot})\simeq M$ in $\D(Q)$.

Set $E_{\mdot,\mdot}:=P_{\mdot,\mdot}\otimes_QR$.
Since $x_1,\ldots,x_c$ annihilate every $M_p$, the Koszul resolution of $R$ over $Q$ gives
\[
\sH_q^v(E_{p,\mdot})
\cong
\Tor_q^Q(R,M_p)
\cong
M_p^{\oplus\binom{c}{q}}
\]
for every $p$ and $q$.
By construction of the left Cartan--Eilenberg resolution, the horizontal maps induced on vertical homology are the differentials of $M$, and hence, as complexes, $\sH_q^v(E_{\mdot,\mdot}) \cong M^{\oplus\binom{c}{q}}$.

For $i\geq0$, let $\tau_{\geq i}^v(E_{\mdot,\mdot})$ denote the good truncation in the vertical direction and set $F^i := \Tot(\tau_{\geq i}^v(E_{\mdot,\mdot}))$. 
Here, the $(p,q)$ term is given by
\[
\tau_{\ge i}^v(E_{\mdot,\mdot})_{p,q} \cong 
\begin{cases}
E_{p,q} & (q > i) \\
\Ker(d^v_{p,q}) & (q=i) \\
0 & (q < i).	
\end{cases}
\] 
Consider the quotient double complex $C^{(i)} :=\tau_{\ge i}^v(E_{\mdot,\mdot})/\tau_{\ge i+1}^v(E_{\mdot,\mdot})$.
By the definition of the good truncation, $C^{(i)}$ is concentrated in vertical degrees $i$ and $i+1$, and these are
\[
C^{(i)}_{p,i}
=
\Ker(d^v_{p,i})
\qquad\text{and}\qquad
C^{(i)}_{p,i+1}
\cong
\Im(d^v_{p,i+1}).
\]
Moreover, the vertical differential $C^{(i)}_{p,i+1} \to C^{(i)}_{p,i}$ is identified with the natural inclusion $\Im(d^v_{p,i+1}) \hookrightarrow \Ker(d^v_{p,i})$.
It follows that
\[
\Tot(C^{(i)})
\cong
\Cone\left(
\Im(d^v_{\mdot,i+1})
\hookrightarrow
\Ker(d^v_{\mdot,i})
\right)[i].
\]
Therefore, we get
\begin{align*}
\Cone(F^{i+1} \to F^i) \simeq F^i/F^{i+1} \cong \Tot(C^{(i)}) 
&\cong 
\Cone\left(
\Im(d^v_{\mdot,i+1})
\hookrightarrow
\Ker(d^v_{\mdot,i})
\right)[i] \\
&\simeq \sH_i^v(E_{\mdot,\mdot})[i] 
\cong M[i]^{\oplus \binom{c}{i}}.	
\end{align*}

Next, we will prove that $(F^{c+1} \to F^c \to \cdots \to F^1 \to F^0)$ is a finite quasi-projective filtration by $M$.
As $P_{\mdot,q} \cong 0$ for $q < 0$, $\tau_{\ge 0}^v(E_{\mdot,\mdot}) \cong E_{\mdot,\mdot} $ and so $F^0 \cong \Tot(E_{\mdot,\mdot} ) \cong \Tot(P_{\mdot,\mdot}) \otimes_Q R \simeq M \ltensor_Q R \in \Pf(R)$.
Since
$\sH_q^v(E_{\mdot,\mdot})
\cong M^{\oplus\binom{c}{q}}
\cong 0
\,\, (q>c)$, 
one has
$\sH_q^v(\tau_{\ge c+1}^v(E_{\mdot,\mdot})) \cong 0$ for all $q$. 
Hence the spectral sequence
\[
E^1_{p,q}
=
\sH_q^v(\tau_{\ge c+1}^v(E_{p,\mdot}))
\Longrightarrow
\sH_{p+q}(F^{c+1})
\]
shows that $F^{c+1}\simeq0$.
Therefore, $(F^{c+1} \to F^c \to \cdots \to F^1 \to F^0)$ is a finite quasi-projective filtration by $M$.
Thus $M$ has finite quasi-projective dimension and moreover $\qpl_R(M) \le \pd_R(F^0) = \pd_R(M \ltensor_Q R)\le \pd_Q(M)$.
In addition, the Auslander--Buchsbaum formula (\Cref{AB}) shows 
\[
\pd_Q(M) = \depth(Q) - \depth_Q(M) = \depth(R) + c - \depth_R(M) = \qpd_R(M) +c,
\]
where the second equality is due to $\depth(R) = \depth(Q) -c$ and \cite[Proposition 5.2(1)]{Iye}.
Consequently, we obtain $\c_R(M) = \qpl_R(M) - \qpd_R(M) \le \pd_Q(M) - (\pd_Q(M) - c) = c$.
\end{proof}

Therefore, over such a complete intersection, every non-zero object of $\Dfb(R)$ has finite quasi-projective dimension.
A converse in the following sense also holds.

\begin{prop}
If every non-zero object	 $M \in \Dfb(R)$ has finite quasi-projective dimension, then $R$ is a complete intersection.
\end{prop}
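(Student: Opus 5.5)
Apply the hypothesis to the residue field $k$, viewed as a complex concentrated in degree zero. Since $\qpd_R(k)<\infty$, \Cref{vir} shows that $k$ is virtually small. So $\thick(k)$ contains a non-zero perfect complex. The main step is to turn this into the statement that $R$ is a complete intersection. We do this with a known characterization: by a theorem of Pollitz (extending work of Dwyer--Greenlees--Iyengar), $R$ is a complete intersection if and only if every object of $\Dfb(R)$ is virtually small. Combined with \Cref{vir}, the hypothesis gives exactly this condition, since every non-zero $M\in\Dfb(R)$ has $\qpd_R(M)<\infty$. Hence $R$ is a complete intersection.

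If we want to avoid Pollitz's theorem and use only $k$, there is an alternative route through \Cref{proploc}(1). A finite quasi-projective filtration $\FF$ by $k$ gives a non-zero perfect complex $F^s$ with $\Supp_R(F^s)=\{\fm\}$. In particular $F^s$ has finite length homology, so $\thick(k)$ contains a Koszul-type object. By the Dwyer--Greenlees--Iyengar result, $k$ being virtually small already forces $R$ to be a complete intersection. Either way, the argument reduces to citing this external theorem.

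A third approach stays closer to the paper's numerical machinery. Apply $\rHom_R(-,k)$ or $-\ltensor_R k$ to $\FF$. Lemma~\ref{bound} and the long exact sequences then show that the Betti numbers $\beta_n(k)=\dim_k\Tor_n^R(k,k)$ are bounded by those of the filtration pieces. This gives polynomial growth of the Betti numbers of $k$, i.e.\ finite complexity of $k$. By Gulliksen's theorem, $\cx_R(k)<\infty$ implies that $R$ is a complete intersection.

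The key step in this third approach, and the main obstacle, is proving polynomial growth from a filtration of length $t-s+1$. I would induct along $\FF$, using the triangles $F^{i+1}\to F^i\to k[i]^{\oplus a_i}$ together with the fact that $\Tor^R_n(F^s,k)$ vanishes for $n\gg0$. Done carefully, this should yield a recursion on the Poincaré series showing that $P_k^R(z)$ times a polynomial is a polynomial, so the Betti numbers of $k$ grow polynomially. Because this step is delicate, I would present the proof via \Cref{vir} together with the cited characterization (Pollitz / Dwyer--Greenlees--Iyengar), and mention the Betti-number argument only as a remark.
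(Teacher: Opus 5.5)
Your main argument is correct and is the paper's proof. By \Cref{vir}, every non-zero object of $\Dfb(R)$ is virtually small, and Pollitz's characterization \cite[Theorem 5.2]{Pol} then shows that $R$ is a complete intersection.

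However, the opening step with $k$ and both alternative routes should be discarded rather than kept as remarks, because they rest on false claims.

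Over \emph{every} local ring, let $K$ be the Koszul complex on a minimal generating set of $\fm$. It is a non-zero perfect complex whose homology is annihilated by $\fm$, so $\sH_i(K)\cong k^{\oplus a_i}$ for all $i$. This has two consequences:
\begin{enumerate}[\rm(1)]
\item $K$ is a finite quasi-projective resolution of $k$. Hence $\qpd_R(k)<\infty$ by \Cref{modcpx}; in fact $\qpd_R(k)=\depth(R)$ by \Cref{AB}.
\item $K\in\thick(k)$, so $k$ is always virtually small.
\end{enumerate}

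So knowing that $\thick(k)$ contains a non-zero perfect complex, or that $k$ has a finite quasi-projective filtration, tells you nothing about $R$. In particular, there is no Dwyer--Greenlees--Iyengar result deducing the complete intersection property from virtual smallness of $k$.

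Your Betti-number recursion also cannot give polynomial growth. Take $R=k[[x,y]]/(x,y)^2$, which is not a complete intersection. There $\dim_k\Tor_n^R(k,k)=2^n$, yet $k$ admits a finite quasi-projective filtration (the good truncations of $K$, with index interval $[0,2]$). The connecting maps in the long exact sequences can cancel growth, so the triangles give no upper bound on $\beta_n(k)$.

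The hypothesis really has to be applied to all objects of $\Dfb(R)$, which is exactly what Pollitz's criterion uses.
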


\begin{proof}
As we have proved in \Cref{vir}, every object of $\Dfb(R)$ is virtually small.
Thus the result follows by \cite[Theorem 5.2]{Pol}.
\end{proof}

\subsection{The new intersection theorem}

Sharif and Yassemi proved the following extension of the new intersection theorem.
Their formulation uses the quasi-projective dimension $\qpd_R^{\mathsf{AGP}}(M)$ of Avramov–Gasharov–Peeva, which equals $\CIdim_R(M) + \cx_R(M)$ when $M$ has finite complete intersection dimension; see \cite[Theorem 5.11]{AGP}\footnote{Please be careful not to confuse their quasi-projective dimension with quasi-projective dimension in our sense. In fact, we expect that $\qpd_R^{\mathsf{AGP}}(M) = \qpl_R(M)$ whenever $\CIdim_R(M) < \infty$ as formulated in the question below.}

\begin{thm}[{\cite[Theorem~3.3]{SY}}]\label{thm:SY}
Let $M,N \in \Dfb(R)$.
If $\CIdim_R(M) < \infty$, then the inequality
\[
\dim_R(N) \le \dim_R(M\ltensor_R N) + \qpd_R^{\mathsf{AGP}}(M)
\]	
holds.
\end{thm}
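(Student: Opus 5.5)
The plan is to reduce to a new intersection theorem for perfect complexes over the ring $Q$ of a suitable quasi-deformation. First, because $\CIdim_R(M)<\infty$, \cite[Theorems 5.9 and 5.11]{AGP} provide a quasi-deformation $R\to R'\gets Q$ with the following properties:
\begin{itemize}
\item the kernel of $Q\to R'$ is generated by a $Q$-regular sequence of length $c=\cx_R(M)$;
\item the complex $M':=M\ltensor_RR'$ satisfies $\pd_Q(M')<\infty$;
\item one has $\qpd^{\mathsf{AGP}}_R(M)=\CIdim_R(M)+\cx_R(M)=\pd_Q(M')-c+c=\pd_Q(M')$.
\end{itemize}
I will check that the last equality really follows from the cited results; otherwise I only need the inequality $\pd_Q(M')\le\qpd^{\mathsf{AGP}}_R(M)$. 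Set $N':=N\ltensor_RR'$.

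The second step is flat base change. Since $R\to R'$ is flat and local, one has
\[
\dim_{R'}(N')=\dim_R(N)+\dim(R'/\fm R'),
\]
and likewise
\[
\dim_{R'}(M'\ltensor_{R'}N')=\dim_{R'}((M\ltensor_RN)\ltensor_RR')=\dim_R(M\ltensor_RN)+\dim(R'/\fm R').
\]
The fibre dimension cancels on both sides, so it suffices to prove
\[
\dim_{R'}(N')\le\dim_{R'}(M'\ltensor_{R'}N')+\pd_Q(M').
\]
Dimensions of $R'$-complexes can be computed over $Q$, because $Q\to R'$ is surjective.

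The third step passes to $Q$. Let $F$ be a finite free resolution of $M'$ over $Q$. The new intersection theorem for complexes (Roberts, in the form due to Foxby and Iyengar) gives
\[
\dim_Q(N')\le\dim_Q(F\otimes_QN')+\pd_Q(F).
\]
Then rewrite
\[
F\otimes_QN'\simeq M'\ltensor_Q N'\simeq (M'\ltensor_{R'}N')\ltensor_{R'}(R'\ltensor_QR').
\]
Here $R'\ltensor_QR'$ is the Koszul complex on $c$ elements tensored with $R'$, and its differentials vanish. Hence it is a direct sum $\bigoplus_{i=0}^{c}R'[i]^{\oplus\binom ci}$, and
\[
\dim_{R'}(M'\ltensor_QN')=\max_i\bigl(\dim_{R'}(M'\ltensor_{R'}N')-i\bigr)=\dim_{R'}(M'\ltensor_{R'}N').
\]
Combining this with the previous step gives the theorem.

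I expect the main obstacle to be the precise form of the intersection inequality for complexes over $Q$. The bound should involve $\pd_Q$ of the perfect complex with exactly the normalization used in the definition of $\dim$ through $\hinf$, and I will need to verify it (or an equivalent version with $\sup F$) against Iyengar's statement. The other delicate point is the bookkeeping that identifies $\qpd^{\mathsf{AGP}}_R(M)$ with $\pd_Q(M')$ for a quasi-deformation of codimension exactly $\cx_R(M)$.
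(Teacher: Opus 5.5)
The overall plan works, and the paper gives no proof of its own here (it only cites \cite{SY}). Step~1 only needs a quasi-deformation with $\pd_Q(M')\le\qpd_R^{\mathsf{AGP}}(M)$. The fibre dimension does cancel in Step~2, for example via $\dim_R(X)=\sup_i\{\dim_R\sH_i(X)-i\}$. The intersection inequality over $Q$ is exactly the form $\dim_Q(N')\le\dim_Q(F\otimes_QN')+\pd_Q(F)$ of \cite[Corollary 18.5.5]{CFH}, which the paper uses in the proof of \Cref{NIT}.

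The genuine error is the rewriting $M'\ltensor_QN'\simeq(M'\ltensor_{R'}N')\ltensor_{R'}(R'\ltensor_QR')$, which is false. The correct base change is $M'\ltensor_QN'\simeq M'\ltensor_{R'}(R'\ltensor_QN')$. In $\D(R')$ the complex $R'\ltensor_QN'$ is only an iterated extension of the $N'[i]^{\oplus\binom{c}{i}}$; this is the filtration built in the proof of \Cref{cxbound}, and it need not split. For $c=1$, the connecting map $N'\to N'[2]$ of the triangle $N'[1]\to R'\ltensor_QN'\to N'\to N'[2]$ is, up to sign, the Eisenbud operator. If your splitting held, you would get $\Tor^Q_n(M',N')\cong\bigoplus_i\Tor^{R'}_{n-i}(M',N')^{\oplus\binom{c}{i}}$. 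Taking $N'=k$ together with $\pd_Q(M')<\infty$ would then force $M'$ to be perfect over $R'$. Concretely, take $Q=k[[x]]$, $R=R'=Q/(x^2)$ and $M=N=k$, which lies exactly in your setting. Then $M'\ltensor_QN'$ has homology $k$ in degrees $0$ and $1$ only, whereas your right-hand side has homology $k^{\oplus 2}$ in every degree $n\ge 1$.

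The equality you actually need, $\dim_Q(M'\ltensor_QN')=\dim_{R'}(M'\ltensor_{R'}N')$, is nevertheless true and needs no comparison of complexes. Let $X,Y$ be complexes with bounded, finitely generated homology over a noetherian ring $S$. By \cite[Corollary A.4.16]{Gbook}, $\hinf((X\ltensor_SY)_\fp)=\hinf(X_\fp)+\hinf(Y_\fp)$ for every $\fp\in\Spec S$. Hence $\Supp_S(X\ltensor_SY)=\Supp_S(X)\cap\Supp_S(Y)$, and
\[
\dim_S(X\ltensor_SY)=\sup\{\dim(S/\fp)-\hinf(X_\fp)-\hinf(Y_\fp)\mid \fp\in\Supp_S(X)\cap\Supp_S(Y)\}.
\]
Apply this with $S=Q$ and with $S=R'$ to $X=M'$ and $Y=N'$. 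Every prime in the $Q$-supports contains $\ker(Q\to R')$. Each such prime corresponds to a prime of $R'$ with the same value of $\dim(S/\fp)$ and the same localized homology, so the two suprema coincide. This is the same computation the paper carries out in the proof of \Cref{NIT}. With this replacement for the Koszul splitting, your chain of (in)equalities proves the statement.
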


We now prove the corresponding statement with quasi-projective length as the correction term.

\begin{thm}[New intersection theorem]\label{NIT}
Let $M,N\in\Dfb(R)$ be non-zero objects.
If $\qpd_R(M) < \infty$, then the inequality 
\[
\dim_R(N) \le \dim_R(M\ltensor_R N)+\qpl_R(M)
\] 
holds.
\end{thm}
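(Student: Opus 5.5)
Choose a finite quasi-projective filtration $\FF=(F^{t+1}\to F^t\to\cdots\to F^{s+1}\to F^s)$ by $M$ with $\qpl_R(M)=\pd_R(F^s)-s$, and set $P:=F^s$. By definition $P$ is a non-zero perfect complex. The plan is to apply the classical new intersection theorem for perfect complexes, in the form due to Foxby--Iyengar \cite[Theorem 3.1]{Iye} (see also \cite{CFH}):
\[
\dim_R(N)\le\dim_R(P\ltensor_RN)+\pd_R(P).
\]
It then suffices to show
\[
\dim_R(P\ltensor_RN)\le\dim_R(M\ltensor_RN)-s,
\]
since combining the two gives $\dim_R(N)\le\dim_R(M\ltensor_RN)+\pd_R(P)-s$, which is the claim.

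To compare the two dimensions, apply $-\ltensor_RN$ to $\FF$. This gives a finite filtration of $P\ltensor_RN$ by $M\ltensor_RN$ with the same index interval $[s,t]$. We may assume $M\ltensor_RN\not\simeq0$; otherwise \Cref{thick} gives $P\ltensor_RN\simeq0$, which contradicts the perfect-complex theorem, since for non-zero perfect $P$ one has $\Supp(P\ltensor N)=\Supp P\cap\Supp N\ni\fm$. Now localize at $\fp\in\Supp_R(P\ltensor_RN)$. By \Cref{thick}, applied to the thick subcategory of complexes whose support lies in $\Supp(M\ltensor N)$, we have $\Supp_R(P\ltensor_RN)\subseteq\Supp_R(M\ltensor_RN)$. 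The localized sequence $\FF_\fp\ltensor N_\fp$ is still a sequence of triangles with cones $(M\ltensor N)_\fp[i]^{\oplus a_i}$, and $(M\ltensor N)_\fp\not\simeq0$. Hence the argument of \Cref{bound}(1)(i), applied to the homology functor, yields
\[
\hinf((P\ltensor_RN)_\fp)=\hinf((M\ltensor_RN)_\fp)+s.
\]
(The argument needs only $a_s>0$ and a nonzero cone, exactly as in the proof of \Cref{proploc}(1).) Therefore
\[
\dim(R/\fp)-\hinf((P\ltensor N)_\fp)=\dim(R/\fp)-\hinf((M\ltensor N)_\fp)-s
\]
for every $\fp$ in $\Supp(P\ltensor N)$. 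Taking the supremum over $\fp$ gives the desired inequality. Support equality also holds by \Cref{proploc}(1), but only the inclusion is needed.

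The main technical point is the homological finiteness needed to invoke the dimension formalism. $P\ltensor_RN$ is homologically finite because $P$ is perfect, and $\dim$ as defined makes sense for any complex with finitely generated homology. $M\ltensor_RN$ may be homologically unbounded below. However, $\hinf$ at each localization is still finite, because its homology is bounded below and finitely generated, and the shift argument above uses only inf-bounds. So no boundedness hypothesis is required, consistent with the statement. The remaining care is citing the perfect-complex new intersection theorem in the exact normalization $\dim N\le\dim(P\ltensor N)+\pd P$ with the paper's homological grading conventions. I expect this to be the only real obstacle, and I would double-check it on $P=R$ and on a Koszul complex.
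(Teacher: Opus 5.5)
Your proof is correct and follows essentially the same route as the paper: choose $\FF$ realizing $\qpl_R(M)$, show via the $\hinf$-shift of \Cref{bound}(1) at each prime that $\dim_R(F^s\ltensor_R N)\le\dim_R(M\ltensor_R N)-s$, and then apply the new intersection theorem for perfect complexes (the paper cites \cite[Corollary 18.5.5]{CFH}). The only difference is cosmetic. The paper first computes $\hinf(F^s_\fp)$ and then uses \cite[Corollary A.4.16]{Gbook} for the tensor products, whereas you apply the bound lemma directly to $\FF_\fp\ltensor_{R_\fp}N_\fp$. Also, your remark should say that $M\ltensor_R N$ may be homologically unbounded \emph{above}, not below; its boundedness below is exactly what your shift argument uses.
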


\begin{proof}
Take a finite quasi-projective filtration $\FF := (F^{t+1} \to F^t \to \cdots \to F^{s+1} \to F^s)$ by $M$ with $\qpl_R(M) = \pd_R(F^s) -s$.
By \Cref{proploc}(1), the complexes $F^s \ltensor_R N$ and $M\ltensor_R N$ have the same support.
Since $\FF \ltensor_R N$ is a finite filtration of $F^s \ltensor_R N$ by $M \ltensor_R N$, \cite[Corollary A.4.16]{Gbook} and \Cref{bound}(1) give
\begin{align*}
 \hinf(F^s_\fp\ltensor_{R_\fp}N_\fp)
 &=s + \hinf (M_\fp) + \hinf (N_\fp)\\
 &=s + \hinf(M_\fp\ltensor_{R_\fp}N_\fp)
\end{align*}
for each $\fp \in \Supp_R(M\ltensor_R N)$.
It follows from the definition of dimension that $\dim_R(F^s \ltensor_R N) = \dim_R(M\ltensor_R N) - s$.
Hence, the new intersection theorem for perfect complexes (\cite[Corollary 18.5.5]{CFH}) yields
\begin{align*}
\dim_R(N) &\le \dim_R(F^s \ltensor_R N) + \pd_R(F^s) \\
&= \dim_R(M\ltensor_R N) - s + \pd_R(F^s) \\
&= \dim_R(M\ltensor_R N) + \qpl_R(M).
\end{align*}
\end{proof}

\subsection{Serre's conditions for derived tensor products}

Celikbas and Piepmeyer (\cite{CP}) proved a descent theorem for Serre’s conditions under a finite complete intersection dimension hypothesis.
We recall it in the form relevant here.

\begin{thm}[{\cite[Theorem 3.1]{CP}}]\label{thm:CP}
Let $M,N\in\Dfb(R)$ be non-zero objects, let $n$ be a non-negative integer, and assume that $\CIdim_R(M)<\infty$. 
Put $c=\cx_R(M)$.
Assume that $M \ltensor_R N \in \Dfb(R)$ and that $M \ltensor_R N$ satisfies $(S_{n+c})$.	
Then $N$ satisfies $(S_n)$.
\end{thm}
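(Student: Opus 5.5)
The plan is to verify the inequality defining $(S_n)$ for $N$ one prime at a time. Fix $\fp\in\Supp_R(N)$; we must show
\[
\depth_{R_\fp}(N_\fp)+\hinf(N_\fp)\ge\min\{n,\height\fp\}.
\]
All hypotheses localize. $\CIdim_{R_\fp}(M_\fp)\le\CIdim_R(M)$ and $\cx_{R_\fp}(M_\fp)\le\cx_R(M)$, via the quasi-deformation and the factorization of \cite[Theorem 5.9]{AGP}. Also $(M\ltensor_RN)_\fp\simeq M_\fp\ltensor_{R_\fp}N_\fp$. So there are two cases, according to whether $\fp\in\Supp_R(M)$.

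\emph{Case 1: $\fp\in\Supp_R(M)\cap\Supp_R(N)=\Supp_R(M\ltensor_RN)$.} We use three facts for complexes of finite CI-dimension:
\begin{itemize}
\item the derived depth formula \cite{CJ2}, $\depth(M_\fp\ltensor N_\fp)=\depth M_\fp+\depth N_\fp-\depth R_\fp$;
\item the Auslander--Buchsbaum formula, $\CIdim M_\fp=\depth R_\fp-\depth M_\fp$;
\item the equality $\hinf(M_\fp\ltensor N_\fp)=\hinf M_\fp+\hinf N_\fp$ (Nakayama, \cite[A.4.16]{Gbook}).
\end{itemize}
Combining them gives
\[
\depth N_\fp+\hinf N_\fp=\bigl(\depth(M\ltensor N)_\fp+\hinf(M\ltensor N)_\fp\bigr)+\bigl(\CIdim M_\fp-\hinf M_\fp\bigr).
\]
The second bracket is non-negative, since $\CIdim\ge\hsup\ge\hinf$. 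The first bracket is at least $\min\{n+c,\height\fp\}\ge\min\{n,\height\fp\}$ by hypothesis. So this case does not even use $c$.

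\emph{Case 2: $\fp\in\Supp_R(N)\setminus\Supp_R(M)$.} This is where the complexity enters, and it is the main obstacle. The idea is to pick a prime $\fq\supseteq\fp$ that is minimal in $\Supp_R(M)\cap V(\fp)$; such a prime exists since $\fm$ lies in both sets. Localizing at $\fq$, we apply the Sharif--Yassemi new intersection theorem (\Cref{thm:SY}) over $R_\fq$ to $M_\fq$ and to $N_\fq$, or better to $(R/\fp)_\fq\ltensor N_\fq$. This bounds $\dim(R_\fq/\fp R_\fq)$ by roughly $\dim$ of the tensor with $M$ plus $\CIdim M_\fq+\cx M_\fq$. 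Since $\Supp M_\fq\cap V(\fp R_\fq)=\{\fq R_\fq\}$, the relevant dimension of the tensor with $M$ is $-\hinf$, so
\[
\height(\fq/\fp)\le\CIdim M_\fq+c-\hinf(\ldots).
\]
Next, apply Case 1 at $\fq$, keeping the extra slack term $\CIdim M_\fq-\hinf M_\fq$. This gives
\[
\depth N_\fq+\hinf N_\fq\ge\min\{n+c,\height\fq\}+\CIdim M_\fq-\hinf M_\fq.
\]
Finally, use the standard inequality $\depth N_\fq\le\depth N_\fp+\dim(R_\fq/\fp R_\fq)$, together with $\hinf N_\fq\le\hinf N_\fp$, to descend from $\fq$ to $\fp$. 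The new intersection bound on $\height(\fq/\fp)$ is exactly what absorbs the loss of $\dim R_\fq/\fp R_\fq$ in depth, and the $c$ in $(S_{n+c})$ pays for the $\cx$ term. This should yield $\depth N_\fp+\hinf N_\fp\ge\min\{n,\height\fp\}$, using $\height\fq\ge\height\fp+\height(\fq/\fp)$.

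The delicate bookkeeping is matching the $\hinf$ terms in the new intersection inequality with those in the Case 1 estimate at $\fq$. If the direct route fails, I would first pass to a quasi-deformation $R\to R'\gets Q'$ with $\pd_{Q'}(M')<\infty$ and $\cx=\pd_{Q'}R'$. There one works with honest finite projective dimension, using the new intersection theorem for perfect complexes over $Q'$, and then descends $(S_n)$ along the faithfully flat map $R\to R'$, checking that the closed fibre can be chosen with $\dim$ zero at the relevant primes.
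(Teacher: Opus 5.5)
The paper does not prove this statement; it is quoted from \cite{CP}. Your argument is nonetheless exactly the CI-dimension transcription of the paper's proof of \Cref{Serre}: Case~1 comes from the derived depth and Auslander--Buchsbaum formulas, and for $\fp\notin\Supp_R(M)$ you take a prime $\fq$ minimal over $I+\fp$, apply the new intersection theorem over $R_\fq$ to $R_\fq/\fp R_\fq$, and descend with \cite[Lemma A.6.2]{Gbook}, with \Cref{thm:SY} and $\cx_{R_\fq}(M_\fq)\le c$ playing the roles of \Cref{NIT} and \Cref{qcx}. The only thing to make explicit is the final case split, as in the paper's Cases~2 and~3: if $\height\fq\ge n+c$, the Sharif--Yassemi bound gives $\depth_{R_\fp}(N_\fp)+\hinf(N_\fp)\ge n$; if $\height\fq<n+c$, use $\dim(R_\fq/\fp R_\fq)\le\height\fq-\height\fp$ in place of the new intersection theorem to get $\ge\height\fp$. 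Your fallback via quasi-deformations is then unnecessary.
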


To prove the version of their theorem for finite quasi-projective dimension, the following lemma plays a key role.

\begin{lem}\label{key}
Let $M\in\Dfb(R)$ be a non-zero object with $\qpd_R(M) < \infty$.
\begin{enumerate}[\rm(1)]
\item	
If $\FF$ is a finite quasi-projective filtration by $M$ with index interval $[s,t]$, then $\qpd_R(M) = \pd_R(F^s) -t$.
\item
The equality
\[
\c_R(M) = \inf\{\range(\FF) \mid \FF \mbox{ is a finite quasi-projective filtration by $M$}\}
\]
holds.
\item
$\c_R(M) = 0$ if and only if $M$ is perfect.
\end{enumerate}
\end{lem}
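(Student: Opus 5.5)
For (1), I will compare two numbers that are both computable from $F^s$. The proof of \Cref{AB} already contains the computation. Given any finite quasi-projective filtration $\FF$ by $M$ with index interval $[s,t]$, \Cref{dwlem}(1) gives $\depth_R(F^s)=\depth_R(M)-t$. The Auslander--Buchsbaum formula for the perfect complex $F^s$ then gives
\[
\pd_R(F^s)-t=\depth(R)-\depth_R(F^s)-t=\depth(R)-\depth_R(M).
\]
By \Cref{AB}, the right-hand side equals $\qpd_R(M)$. So every filtration realizes $\qpd_R(M)$, not only an optimal one; nothing beyond this needs to be checked.

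For (2), I will use (1) to rewrite the quantity defining $\qpl_R(M)$. For any filtration $\FF$,
\[
\pd_R(F^s)-s=(\pd_R(F^s)-t)+(t-s)=\qpd_R(M)+\range(\FF).
\]
Taking the infimum over all finite quasi-projective filtrations by $M$ gives $\qpl_R(M)=\qpd_R(M)+\inf\range(\FF)$. Since $\qpd_R(M)$ is a finite integer, subtracting it is legitimate, and we obtain $\c_R(M)=\inf\{\range(\FF)\}$.

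For (3), suppose first that $M$ is perfect. Then the trivial filtration $(0\to M)$ has range $0$, so (2) gives $\c_R(M)=0$. Conversely, suppose $\c_R(M)=0$. Since ranges are non-negative integers, the infimum in (2) is attained, so some filtration has $s=t$. Such a filtration has $F^{t+1}\cong 0$ and $F^s\cong M[s]^{\oplus a_s}$ with $a_s>0$, so $M^{\oplus a_s}$ is perfect. Because $\Pf(R)$ is thick and hence closed under direct summands, $M$ is perfect.

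The only real input is the Auslander--Buchsbaum formula in (1). The rest is bookkeeping, apart from the small point that the infimum of a set of non-negative integers is attained, which (3) uses.
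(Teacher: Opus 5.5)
Your proof is correct and follows the paper's argument essentially step for step: (1) via \Cref{dwlem}(1) and the Auslander--Buchsbaum formula, (2) by rewriting $\pd_R(F^s)-s$ in terms of $\qpd_R(M)$ and $\range(\FF)$, and (3) via the trivial filtration in one direction and a range-zero filtration with $F^s\simeq M[s]^{\oplus a_s}$ in the other. Your version of (2) takes the infimum of the identity $\pd_R(F^s)-s=\qpd_R(M)+\range(\FF)$ directly, which is slightly cleaner than the paper's argument (an inequality plus a filtration attaining $\qpl_R(M)$), since you never need the infimum defining $\qpl_R(M)$ to be attained.
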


\begin{proof}
(1) By the Auslander--Buchsbaum formula (\Cref{AB}) and Lemma~\ref{dwlem}(1), we get
\[
\qpd_R(M) = \depth(R) - \depth_R(M) = \depth(R) - \depth_R(F^s) - t = \pd_R(F^s) -t.
\]

(2) Let $\FF$ be a finite quasi-projective filtration by $M$ with index interval $[s,t]$.
Then (1) shows $\c_R(M) = \qpl_R(M) - \qpd_R(M) \le (\pd_R(F^s) -s) - (\pd_R(F^s) -t) = t-s = \range(\FF)$.
Moreover, if we choose $\FF$ so that $\qpl_R(M) = \pd_R(F^s) - s$, then $\c_R(M) = \qpl_R(M) - \qpd_R(M) = (\pd_R(F^s) -s) - (\pd_R(F^s) -t) = t-s = \range(\FF)$ holds again using (1).
This shows the equality
\[
\c_R(M) = \inf\{\range(\FF) \mid \FF \mbox{ is a finite quasi-projective filtration by $M$}\}.
\]

(3) If $M$ is perfect, then the finite quasi-projective filtration $(F^1=0 \to F^0 = M)$ by $M$ has range $0$.
Thus, (2) shows $\c_R(M) = 0$.
Conversely, if $\c_R(M) = 0$, then there is a finite quasi-projective filtration of the form $\FF =  (F^{s+1} \to F^s)$ by (2).
This implies that $F^s \simeq M[s]^{\oplus a_s}$ for some $a_s > 0$.
Since $F^s$ is perfect, so is $M$.
\end{proof}

\begin{prop}\label{qcx}
Let $M\in\Dfb(R)$ be a non-zero object with $\qpd_R(M) < \infty$.
Then $\c_{R_\fp}(M_\fp) \le \c_R(M)$ holds for any $\fp \in \Supp_R(M)$.
\end{prop}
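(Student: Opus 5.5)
The plan is to localize a filtration of minimal range and then use the range characterization of $\c$ from \Cref{key}(2) at the prime $\fp$.

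First choose a finite quasi-projective filtration $\FF=(F^{t+1}\to F^t\to\cdots\to F^s)$ by $M$ with $\range(\FF)=t-s=\c_R(M)$. Such a filtration exists by \Cref{key}(2), since the infimum of a nonempty set of non-negative integers is attained.

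Next, fix $\fp\in\Supp_R(M)$ and localize to get $\FF_\fp=((F^{t+1})_\fp\to\cdots\to(F^s)_\fp)$. The triangles $F^{i+1}\to F^i\to M[i]^{\oplus a_i}$ localize to triangles with third term $M_\fp[i]^{\oplus a_i}$. The multiplicities $a_s,a_t$ are unchanged and still positive. By \Cref{proploc}(1), $(F^s)_\fp\not\simeq 0$, and it is perfect over $R_\fp$. Hence $\FF_\fp$ is a finite quasi-projective filtration by $M_\fp$ with the same index interval $[s,t]$, exactly as in the proof of \Cref{proploc}(2). In particular $\qpd_{R_\fp}(M_\fp)<\infty$, and $M_\fp$ is a non-zero object of $\Dfb(R_\fp)$.

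Now apply \Cref{key}(2) over the local ring $R_\fp$. This is legitimate because $M_\fp$ is non-zero with finite quasi-projective dimension. It gives
\[
\c_{R_\fp}(M_\fp)\le \range(\FF_\fp)=t-s=\c_R(M).
\]

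The only delicate point is checking that the localized sequence is still a genuine filtration in the sense of the definition. The issue is that the end multiplicities $a_s,a_t$ must remain positive, which requires $M_\fp\neq 0$; this holds because $\fp\in\Supp_R(M)$. The other condition is that $F^s_\fp$ must be non-zero, which is \Cref{proploc}(1).
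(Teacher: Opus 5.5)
Your proposal is correct and matches the paper's proof: take a filtration of range $\c_R(M)$ via \Cref{key}(2), localize it using \Cref{proploc}(1), and apply \Cref{key}(2) over $R_\fp$. One small correction to your last paragraph: the integers $a_s,a_t$ do not change under localization, so their positivity does not depend on $M_\fp\neq 0$. What that hypothesis actually secures is $(F^s)_\fp\not\simeq 0$ and the applicability of \Cref{key}(2) to the non-zero object $M_\fp$.
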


\begin{proof}
Choose a finite quasi-projective filtration by $M$ of $\range(\FF) = \c_R(M)$, which exists by \Cref{key}(2).
Localizing at $\fp$ gives a finite quasi-projective filtration $\FF_\fp$ by $M_\fp$ with the same index interval; its terms are non-zero by \Cref{proploc}(1). 
Hence, again using \Cref{key}(2), we obtain $\c_{R_\fp}(M_\fp) \le \range(\FF_\fp) = \range(\FF) = \c_{R}(M)$.
\end{proof}

Now, we are ready to prove the following theorem.

\begin{thm}\label{Serre}
Let $M,N\in\Dfb(R)$ be non-zero objects, let $n$ be a non-negative integer, and assume that $\qpd_R(M)<\infty$. 
Put $c=\c_R(M)$.
Assume that $M \ltensor_R N \in \Dfb(R)$ and that $M \ltensor_R N$ satisfies $(S_{n+c})$.	
Then $N$ satisfies $(S_n)$.
\end{thm}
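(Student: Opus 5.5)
The plan is to verify the inequality defining $(S_n)$ for $N$ prime by prime, by localizing and comparing $N_\fp$ with $(M\ltensor_RN)_\fp$ through the derived depth formula. The correction term coming from $M$ will be controlled by \Cref{key}(2), \Cref{qcx} and the new intersection theorem (\Cref{NIT}). Fix $\fp\in\Supp_R(N)$. The main case is $\fp\in\Supp_R(M)\cap\Supp_R(N)=\Supp_R(M\ltensor_RN)$. Here $M_\fp$ is a non-zero complex with $\qpd_{R_\fp}(M_\fp)<\infty$ by \Cref{proploc}(2). Moreover $\c_{R_\fp}(M_\fp)\le c$ by \Cref{qcx}, and $(M\ltensor_RN)_\fp$ satisfies $(S_{n+c})$ at $\fp R_\fp$. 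So after localizing we may assume $R$ is local with $\fp=\fm$, and $\c_R(M)\le c$. Put $h=\height\fm=\dim R$. We must show $\depth_R(N)+\hinf(N)\ge\min\{n,h\}$.

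By \Cref{key}(2), choose a finite quasi-projective filtration $\FF$ by $M$ with index interval $[s,t]$ and $t-s=\c_R(M)\le c$. Combining the derived depth formula (\Cref{DDF}) with $\hinf(M\ltensor_RN)=\hinf(M)+\hinf(N)$ (\cite[Corollary A.4.16]{Gbook}) gives
\[
\depth_R(N)+\hinf(N)=\bigl(\depth_R(M\ltensor_RN)+\hinf(M\ltensor_RN)\bigr)+\bigl(\depth(R)-\depth_R(M)-\hinf(M)\bigr).
\]
By \Cref{AB} and \Cref{key}(1), the second bracket equals $\qpd_R(M)-\hinf(M)=\pd_R(F^s)-t-\hinf(M)$. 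By \Cref{bound}(1)(i), $\hinf(M)=\hinf(F^s)-s$. Since $\pd_R(F^s)\ge\hinf(F^s)$, the second bracket is at least $s-t\ge -c$. The first bracket is at least $\min\{n+c,h\}$ by hypothesis. Hence
\[
\depth_R(N)+\hinf(N)\ge\min\{n+c,h\}-c.
\]
This settles the case $h\ge n+c$.

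The remaining case $h<n+c$ is the main obstacle. There the hypothesis gives $\depth_R(M\ltensor_RN)+\hinf(M\ltensor_RN)\ge h$. I would first use the standard inequalities $\depth_R X+\hinf X\le\dim_R X+\hinf X$ and $\dim_R X+\hinf X\le\dim R$ for $X=M\ltensor_RN$ to conclude that $X$ is "Cohen--Macaulay", i.e.\ $\depth_R X=\dim_R X=h-\hinf X$. Next I would invoke \Cref{NIT} in its sharper form, obtained from its proof with the same filtration $\FF$ (taking $\FF$ of minimal range). That form gives $\dim_R(N)\le\dim_R(X)-s+\pd_R(F^s)$, and the Auslander--Buchsbaum formula applied to $F^s$, $\depth_R(F^s)=\depth_R(M)+t$, then rewrites this in terms of depths. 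The aim is to combine this with the depth identity above. Doing so should eliminate the loss of $c$ and yield the lower bound $\depth_R(N)+\hinf(N)\ge\min\{n,h\}$. If that bookkeeping fails, the fallback is induction on $n$. This reduces to showing that $N$ satisfies $(S_{n-1})$ at primes of smaller height, and that at $\fm$ the deficit is at most what $\min\{n,h\}$ requires.

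Finally, primes $\fp\in\Supp_R(N)\setminus\Supp_R(M)$ are not controlled by $M\ltensor_RN$ at all. I expect that handling them requires either an implicit condition such as $\Supp_R(M)=\Spec R$, or a reduction showing that the relevant primes lie in $\Supp_R(M)$. I would check how \cite{CP} treats this point and adopt the same convention.
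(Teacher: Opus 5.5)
There are two genuine gaps. Both come from misplacing where the correction $c$ is actually needed.

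\textbf{The case $h<n+c$ is unproved, but the difficulty is self-inflicted.} At primes $\fp\in\Supp_R(M)\cap\Supp_R(N)$ you lose $c$ for no reason. The estimate $\pd_R(F^s)\ge\hinf(F^s)$ is too weak. Use instead $\pd_R(F^s)\ge\hsup(F^s)=\hsup(M)+t$ from \Cref{bound}(1)(ii), or equivalently $\qpd_R(M)\ge\hsup(M)\ge\hinf(M)$ from \Cref{pd-qpd}. Then your second bracket is $\ge 0$, and you get $\depth_{R_\fp}(N_\fp)+\hinf(N_\fp)\ge\min\{n+c,\height\fp\}$ with no loss at all. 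Your ``main obstacle'' $h<n+c$ then disappears. As written, that case rests only on a hoped-for bookkeeping (``should eliminate the loss'') or an unspecified induction, so it is not a proof. You also reach for \Cref{NIT} there, where it is not needed.

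\textbf{Primes outside $\Supp_R(M)$ are not handled, and this is where $c$ is really needed.} The primes $\fp\in\Supp_R(N)\setminus\Supp_R(M)$ are part of the statement; there is no implicit hypothesis $\Supp_R(M)=\Spec R$. These primes are exactly where \Cref{NIT}, and hence $c$, is used. The missing idea is as follows.
\begin{enumerate}[\rm(i)]
\item Write $\Supp_R(M)=V(I)$ and choose $\fq$ minimal over $I+\fp$. Then $\fq\in\Supp_R(M)\cap\Supp_R(N)$, since supports of homologically finite complexes are specialization-closed.
\item By minimality, $\Supp_{R_\fq}(M_\fq\ltensor_{R_\fq}R_\fq/\fp R_\fq)=\{\fq R_\fq\}$. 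So this complex has dimension $-\hinf(M_\fq)$.
\item The localization inequality gives
\[
\depth_{R_\fp}(N_\fp)+\hinf(N_\fp)\ge\depth_{R_\fq}(N_\fq)+\hinf(N_\fq)-\dim(R_\fq/\fp R_\fq).
\]
\item \Cref{NIT}, applied to $M_\fq$ and $R_\fq/\fp R_\fq$, gives $\dim(R_\fq/\fp R_\fq)\le\qpl_{R_\fq}(M_\fq)-\hinf(M_\fq)$.
\item \Cref{qcx} gives $\qpl_{R_\fq}(M_\fq)\le c+\qpd_{R_\fq}(M_\fq)$. With $X=M\ltensor_RN$, \Cref{AB} and \Cref{DDF} give $\qpd_{R_\fq}(M_\fq)=\depth_{R_\fq}(N_\fq)-\depth_{R_\fq}(X_\fq)$.
\end{enumerate}
Putting these together, everything collapses to
\[
\depth_{R_\fp}(N_\fp)+\hinf(N_\fp)\ge\depth_{R_\fq}(X_\fq)+\hinf(X_\fq)-c\ge\min\{n+c,\height\fq\}-c.
\]
This is $\ge n$ when $\height\fq>n+c$. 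When $\height\fq\le n+c$, the lossless bound at $\fq$ gives $\depth_{R_\fq}(N_\fq)+\hinf(N_\fq)\ge\height\fq$. Combined with $\dim(R_\fq/\fp R_\fq)\le\height\fq-\height\fp$, this yields $\ge\height\fp$. Without this argument your proposal does not establish $(S_n)$ for $N$.
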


\begin{proof}
First we note that $\qpd_{R_\fp}(M_\fp)<\infty$ holds for every $\fp \in \Spec(R)$ by \Cref{proploc}(2).

Fix $\fp \in \Supp_R(N)$ and prove
\[
\depth_{R_\fp}(N_\fp) + \hinf(N_\fp) \ge \min\{n,\height\fp\}.
\]
We divide the proof into the following three cases:

\emph{Case 1: $\fp \in \Supp_R(M) \cap \Supp_R(N)$.}

\emph{Case 2: $\fp \in \Supp_R(N) \setminus \Supp_R(M)$ and $\dim(R) \le n + c$.}

\emph{Case 3: $\fp \in \Supp_R(N) \setminus \Supp_R(M)$ and $\dim(R) > n + c$.}

\smallskip
\noindent\emph{Case 1: $\fp \in \Supp_R(M) \cap \Supp_R(N)$.}
\smallskip

In this case, Theorems~\ref{AB}, \ref{DDF} and \cite[Corollary A.4.16]{Gbook} give
\begin{align*}
 \depth_{R_\fp}(N_\fp) + \hinf(N_\fp) 
 &= (\depth_{R_\fp}(M_\fp \ltensor_{R_\fp} N_\fp) - \depth_{R_\fp}(M_\fp) + \depth(R_\fp)) + \hinf(N_\fp) \\
 &= \depth_{R_\fp}(M_\fp \ltensor_{R_\fp} N_\fp) + \hinf(M_\fp \ltensor_{R_\fp} N_\fp) + \qpd_{R_\fp}(M_\fp)-\hinf(M_\fp)\\
 &\ge \depth_{R_\fp}(M_\fp \ltensor_{R_\fp} N_\fp) + \hinf(M_\fp \ltensor_{R_\fp} N_\fp) \\
 &\ge \min\{n+c,\height \fp\}.
\end{align*}
Here, the first inequality follows from \Cref{pd-qpd}, while the last inequality follows from the assumption that $M\ltensor_RN$ satisfies $(S_{n+c})$.

\smallskip
\noindent\emph{Case 2: $\fp \in \Supp_R(N) \setminus \Supp_R(M)$ and $\dim(R) \le n + c$.}
\smallskip

Since $\fm \in \Supp_R(M) \cap \Supp_R(N)$, it follows from Case 1 and \cite[Lemma A.6.2]{Gbook} that
\begin{align*}
\depth_{R_\fp}(N_\fp) + \hinf(N_\fp) 
&\ge \depth_R(N) - \dim(R/\fp R) + \hinf(N) \\
&\ge \min\{n+c, \dim(R)\} - (\dim(R) - \height \fp) \\
&= \dim(R) - (\dim(R) - \height \fp) = \height \fp. 
\end{align*}

\smallskip
\noindent\emph{Case 3: $\fp \in \Supp_R(N) \setminus \Supp_R(M)$ and $\dim(R) > n + c$.}
\smallskip

Let $I = \ann_R \bigl(\bigoplus_i \sH_i(M)\bigr)$, so $\Supp_R(M)=V(I)$, and choose a minimal prime ideal $\fq$ of $I + \fp$.
Then $\fq\in\Supp_R M\cap\Supp_R N$.
If $\height \fq \le n+c$, then $\dim(R_\fq) \le n+c$. 
Applying Case 2 over $R_\fq$ to the prime $\fp R_\fq$ yields $\depth_{R_\fp}(N_\fp) + \hinf(N_\fp) \ge \height \fp$.
Assume $\height \fq > n+c$.
Then \Cref{NIT} yields
\begin{align}\label{eq:Serre}
\dim_{R_\fq}(R_\fq/\fp R_\fq) \le \qpl_{R_\fq}(M_\fq) + \dim_{R_\fq}(M_\fq \ltensor_{R_\fq} R_\fq/\fp R_\fq).
\end{align}
Since $\fq$ is a minimal prime ideal of $I + \fp$, one has $\Supp_{R_\fq}(M_\fq \ltensor_{R_\fq} R_\fq/\fp R_\fq)  = \{\fq R_\fq\}$.
Hence the definition of the dimension shows 
\begin{align}\label{eq:Serre2}
\dim_{R_\fq}(M_\fq \ltensor_{R_\fq} R_\fq/\fp R_\fq) = - \hinf(M_\fq \ltensor_{R_\fq} R_\fq/\fp R_\fq).	
\end{align}
Thus, we get 
\begin{align}\label{eq:Serre3}
\begin{split}
\depth_{R_\fp}(N_\fp)+\hinf(N_\fp)
&\geq \depth_{R_\fq}(N_\fq)+\hinf(N_\fq)
-\dim(R_\fq/\fp R_\fq)\\
&\geq \depth_{R_\fq}(N_\fq)+\hinf(N_\fq) -\qpl_{R_\fq}(M_\fq)\\
&\qquad\qquad\qquad\qquad
-\dim_{R_\fq}\bigl(M_\fq\ltensor_{R_\fq}R_\fq/\fp R_\fq\bigr)\\
&=\depth_{R_\fq}(N_\fq)+\hinf(N_\fq)
-\qpl_{R_\fq}(M_\fq)+\hinf(M_\fq).
\end{split}
\end{align}
where the first inequality follows from \cite[Lemma A.6.2]{Gbook}, the second one is due to (\ref{eq:Serre}), while the last one uses (\ref{eq:Serre2}).

On the other hand, since $\c_{R_\fq}(M_\fq) \le \c_{R}(M) = c$ by \Cref{qcx}, Theorems~\ref{AB} and \ref{DDF} imply
\begin{align}\label{eq:Serre4}
\begin{split}
\qpl_{R_\fq}(M_\fq) \le c + \qpd_{R_\fq}(M_\fq) &= c + \depth(R_\fq) - \depth_{R_\fq}(M_\fq) \\
&= c + \depth_{R_\fq}(N_\fq) - \depth_{R_\fq}(M_\fq \ltensor_{R_\fq} N_\fq).
\end{split}
\end{align}
Combining (\ref{eq:Serre3}) and (\ref{eq:Serre4}), we obtain
\begin{align*}
\depth_{R_\fp}(N_\fp) + \hinf(N_\fp) &\ge \depth_{R_\fq}(N_\fq) - (c + \depth_{R_\fq}(N_\fq) - \depth_{R_\fq}(M_\fq \ltensor_{R_\fq} N_\fq)) + \hinf(M_\fq) \\
&\ge -c + \depth_{R_\fq}(M_\fq \ltensor_{R_\fq} N_\fq) + \hinf(M_\fq \ltensor_{R_\fq} N_\fq) \\
&\ge -c + \min\{n+c, \height \fq\} = - c + (n+c) = n.  
\end{align*}
This completes the proof.
\end{proof}

Comparing Theorems~\ref{thm:SY}, \ref{thm:CP} with Theorems~\ref{NIT}, \ref{Serre}, and taking \Cref{cxbound} into account, the following question naturally arises.

\begin{ques}
If $M \in \Dfb(R)$ satisfies $\CIdim_R(M) < \infty$, does the equality $\c_R(M) = \cx_R(M)$ hold?	
\end{ques}
\noindent
The question remains open even when $R$ is a deformation of a regular local ring.
\section{Vanishing of Ext and Tor}

The results of the preceding sections describe the numerical consequences of finite quasi-projective dimension.
We now turn to vanishing phenomena for Tor and Ext modules, as well as Tate (co)homology.
We then apply these vanishing results to the symmetry of Ext vanishing over Gorenstein rings, which is known to hold for complexes of finite complete intersection dimension.

We begin with Auslander type conditions for complexes with finite quasi-projective dimension.

\begin{prop}\label{AC}
Let $M,N\in\Dfb(R)$ be non-zero objects, and assume that $\qpd_R(M)<\infty$.
\begin{enumerate}[\rm(1)]
\item 
If $\hsup(M\ltensor_RN)<\infty$, then $\hsup(M\ltensor_RN) \le \qpd_R(M)+\hsup(N)$.
\item 
If $\hinf(\rHom_R(M,N))>-\infty$, then $\hinf(\rHom_R(M,N)) \ge \hinf (N)-\qpd_R(M)$.
\end{enumerate}
\end{prop}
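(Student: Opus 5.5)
The plan is to pass to a filtration and then apply \Cref{bound} and \Cref{bound2} to the homology functor. Choose a finite quasi-projective filtration $\FF=(F^{t+1}\to F^t\to\cdots\to F^s)$ by $M$. By \Cref{key}(1), every such filtration satisfies $\qpd_R(M)=\pd_R(F^s)-t$, so the particular choice does not matter.

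For (1), apply the exact functor $-\ltensor_RN$. This produces a finite filtration $\FF\ltensor_RN$ of $F^s\ltensor_RN$ by $M\ltensor_RN$ with the same index interval $[s,t]$. The hypothesis gives $\hsup(M\ltensor_RN)<\infty$, so \Cref{bound}(1)(ii) for the homological functor $\sH_*$ yields
\[
\hsup(F^s\ltensor_RN)=\hsup(M\ltensor_RN)+t.
\]
If $M\ltensor_RN$ happens to be acyclic, the statement is vacuous or trivial; otherwise the equality above applies. Since $F^s$ is perfect, represent it by a bounded complex of finitely generated projectives $P$ with $\sup(P)=\pd_R(F^s)$. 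Replace $N$ by a complex $N'$ with $N'_i=0$ for $i>\hsup(N)$, obtained by good truncation $\tau_{\le \hsup N}$. Then $P\otimes_R N'$ computes $F^s\ltensor_RN$ and vanishes in degrees above $\pd_R(F^s)+\hsup(N)$. Hence
\[
\hsup(M\ltensor_RN)+t\le \pd_R(F^s)+\hsup(N),
\]
that is, $\hsup(M\ltensor_RN)\le \qpd_R(M)+\hsup(N)$.

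For (2), apply the contravariant exact functor $\rHom_R(-,N)$. This turns $\FF$ into a finite cofiltration of $\rHom_R(F^s,N)$ by $\rHom_R(M,N)$ with index interval $[s,t]$. Using \Cref{bound2}(1)(i) with $\sH_*$, where the hypothesis $\hinf(\rHom_R(M,N))>-\infty$ is what is needed, we get
\[
\hinf(\rHom_R(F^s,N))=\hinf(\rHom_R(M,N))-t.
\]
Now $\rHom_R(F^s,N)\simeq\Hom_R(P,N'')$, where $N''=\tau_{\ge\hinf N}N$ is concentrated in degrees $\ge\hinf(N)$. Since $P$ lives in degrees $\le\pd_R(F^s)$, the complex $\Hom_R(P,N'')$ lives in degrees $\ge \hinf(N)-\pd_R(F^s)$. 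Therefore
\[
\hinf(\rHom_R(M,N))-t\ge\hinf(N)-\pd_R(F^s),
\]
which rearranges to $\hinf(\rHom_R(M,N))\ge\hinf(N)-\qpd_R(M)$.

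The only delicate point is the degenerate case where $M\ltensor_RN$ or $\rHom_R(M,N)$ is acyclic, in which the $\sup$ and $\inf$ are $\mp\infty$. There the inequalities hold trivially, so they can be disposed of at the start, and one should check that \Cref{bound} and \Cref{bound2} are applied only under their finiteness hypotheses. Beyond that, the proof needs only the standard amplitude bounds for $P\otimes_R-$ and $\Hom_R(P,-)$ with $P$ a bounded projective complex.
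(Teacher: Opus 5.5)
Your proof is correct and follows the paper's argument. You push a finite quasi-projective filtration by $M$ through $-\ltensor_R N$ (resp.\ $\rHom_R(-,N)$) and apply \Cref{bound}(1)(ii) (resp.\ \Cref{bound2}(1)(i)) to $\sH_*$. You then bound the perfect end term via the amplitude of $P\otimes_R N'$ (resp.\ $\Hom_R(P,N'')$).

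The only cosmetic difference is that you invoke \Cref{key}(1) so that every filtration computes $\qpd_R(M)$, whereas the paper simply picks a filtration attaining $\qpd_R(M)=\pd_R(F^s)-t$. For these one-sided inequalities, using any filtration and then taking the infimum would work just as well.
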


\begin{proof}
Choose a finite quasi-projective filtration
\[
\FF=(F^{t+1}\to F^t\to\cdots\to F^{s+1}\to F^s)
\]
by $M$ such that $\qpd_R(M) = \pd_R(F^s)-t$.

(1) By \Cref{bound}(1)(ii), one has
\[
\hsup(M\ltensor_R N) = \hsup(F^s \ltensor_R N) - t \le \pd_R(F^s) + \hsup(N) -t = \qpd_R(M) + \hsup(N).
\]
Indeed, one may choose a projective resolution $P \simeq F^s$ such that $P_i=0$ for $i>\pd_R(F^s)$, and a complex $N'\simeq N$ such that $N'_j=0$ for $j>\hsup(N)$. 
Then $F^s\ltensor_R N \simeq P\otimes_R N'$, which yields the inequality.

(2) By \Cref{bound2}(1)(i) and \cite[Corollary 16.3.6]{CFH}, one has
\[
\hinf(\rHom_R(M,N)) = \hinf(\rHom_R(F^s, N)) + t \ge \hinf(N) - \pd_R(F^s) + t = \hinf(N) - \qpd_R(M).
\]
Indeed, one may choose a projective resolution $P \simeq F^s$ such that $P_i=0$ for $i>\pd_R(F^s)$, and a complex $N' \simeq N$ such that $N'_j=0$ for $j<\hinf(N)$. 
Then $\rHom_R(F^s,N)\cong \Hom_R(P,N')$ and hence we obtain the desired inequality.
\end{proof}


We next study vanishing of Tate homology and Tate cohomology. 
Unlike ordinary Tor and Ext, Tate (co)homology is defined for complexes of finite Gorenstein dimension. 
We first recall complete resolutions and Gorenstein dimension.

\begin{dfn}
Let $M \in \Dfb(R)$.
\begin{enumerate}[\rm(1)]
\item
An acyclic complex $T$ of finitely generated projective $R$-modules is called {\em totally acyclic} if the complex $\Hom_R(T,R)$ is acyclic.
\item
A {\em complete resolution} of $M$ is a diagram $T \xrightarrow{\tau} P \xrightarrow{\pi} M$, where $\pi$ is a projective resolution of $M$, $T$ is totally acyclic, and $\tau_i$ is an isomorphism for $i \gg 0$.
\item
We define the {\em Gorenstein dimension} of $M$ by 
\[
\Gdim_R(M)
:=
\inf\left\{
n \ \middle|\
\begin{array}{l}
\mbox{there is a complete resolution } T \xrightarrow{\tau} P \xrightarrow{\pi} M \mbox{ such that}\\
\mbox{$\tau_i$ is an isomorphism for all $i \ge n$}
\end{array}
\right\}.
\]
Therefore, $\Gdim_R(M) < \infty$ if and only if $M$ admits a complete resolution.

Denote by $\cG(R)$ the full subcategory of $\Dfb(R)$ consisting of objects $M$ with $\Gdim_R(M)< \infty$.
\end{enumerate}
\end{dfn}

\begin{rmk}
It is known that $R$ is Gorenstein if and only if every $M \in \Dfb(R)$ has finite Gorenstein dimension; see \cite[2.3.14]{Gbook}.
Moreover, $\cG(R)$ is a thick subcategory of $\Dfb(R)$ containing $\Pf(R)$; see \cite[Lemma 2.1.12]{Gbook}.
\end{rmk}

If a complex has finite Gorenstein dimension, we can define the Tate (co)homology modules.

\begin{dfn}
Let $M,N \in \Dfb(R)$.
Assume that $\Gdim_R(M)< \infty$ and let $T \to P \to M$ be a complete resolution.

\begin{enumerate}[\rm(1)]
\item
Define the {\em $n$-th Tate homology module} of $M,N$ by 
\[
\widehat{\Tor}_n^R(M,N) := \sH_n(T \otimes_R N).  
\]	

\item
Define the {\em $n$-th Tate cohomology module} of $M,N$ by 
\[
\widehat{\Ext}^n_R(M,N) := \sH_{-n}(\Hom_R(T, N)).  
\]	
\end{enumerate}
These definitions are independent of the choices of complete resolutions.
\end{dfn}

\begin{prop}\label{tate-functor}
Tate homology and Tate cohomology induce bifunctors
\begin{align*}
\widehat{\Tor}^R_n(-,-)&: \cG(R)\times\Dfb(R)\longrightarrow \Mod(R) \\
\widehat{\Ext}_R^n(-,-)&: \cG(R)^\op\times\Dfb(R)\longrightarrow \Mod(R).
\end{align*}
Moreover, the following statements hold.
\begin{enumerate}[\rm(1)]
\item
For every $N\in\Dfb(R)$, the functor $\widehat{\Tor}^R_*(-,N)$ is homological. 

\item
For every $M\in\cG(R)$, the functor $\widehat{\Tor}^R_*(M,-)$ is homological.

\item
For every $N\in\Dfb(R)$, the functor $\widehat{\Ext}_R^*(-,N)$ is cohomological.

\item
For every $M\in\cG(R)$, the functor $\widehat{\Ext}_R^*(M,-)$ is cohomological.
\end{enumerate}
\end{prop}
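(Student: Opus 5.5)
We need to prove \Cref{tate-functor}: that Tate (co)homology defines bifunctors and is (co)homological in each variable. The plan is to realize Tate homology and cohomology as ordinary homology of exact functors into the homotopy category of complexes, and to make the construction functorial on $\cG(R)$.

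\emph{Step 1: a functorial complete resolution.} For $M\in\cG(R)$, let $P_M$ be a projective resolution in $\mathsf{K}^{-}(\proj R)$; this is functorial up to homotopy, giving a triangle equivalence between $\Dfb(R)$ and the corresponding homotopy category. Let $\mathsf{K_{tac}}(R)$ be the homotopy category of totally acyclic complexes, which is a triangulated subcategory of $\mathsf{K}(\proj R)$ closed under shifts and cones. We will show that the assignment $M\mapsto T_M$ extends to an exact functor $\cG(R)\to\mathsf{K_{tac}}(R)$. Concretely, we invoke the standard fact (Avramov--Martsinkovsky, cf.\ \cite{Gbook}) that for a complete resolution $T\to P$, any morphism $P\to P'$ into another complete resolution $T'\to P'$ lifts to a morphism $T\to T'$ that is unique up to homotopy. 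This makes $T_M$ functorial. Exactness follows because a triangle $L\to M\to N\to$ in $\cG(R)$ can be represented by a degreewise split sequence of projective resolutions. A horseshoe-type construction then produces a degreewise split sequence of complete resolutions, whose cone identification gives the triangle $T_L\to T_M\to T_N\to T_L[1]$ in $\mathsf{K_{tac}}(R)$. Compatibility with shifts is immediate.

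\emph{Step 2: exactness in each variable and conclusion.} The functors $T\otimes_R-$ and $\Hom_R(T,-)$, applied to bounded complexes $N$, are exact functors from $\mathsf K(\Mod R)$ to itself. They send quasi-isomorphisms in $N$ to quasi-isomorphisms, since $T$ is totally acyclic and consists of finitely generated projectives; this is standard for $T\otimes_R-$ and $\Hom_R(T,-)$ with $T$ totally acyclic and $N$ bounded (after reducing, via truncations and triangles, to modules, where $T\otimes_R N$ and $\Hom(T,N)$ are acyclic for acyclic $N$ only when $N$ is bounded). Hence they descend to $\Dfb(R)$, giving a bifunctor $\mathsf{K_{tac}}(R)\times\Dfb(R)\to\mathsf K(R)$ that is exact in each variable. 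Contravariance in the first variable for $\Hom$ yields the $\cG(R)^\op$ statement.

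Composing with $\sH_n$ on $\mathsf K(R)$, which is a homological functor, gives (1)--(4). For (3) and (4) we set $H^n=\sH_{-n}$, and the index conventions $H_n(M[i])\cong H_{n-i}(M)$ and $H^n(M[i])\cong H^{n+i}(M)$ follow from the standard sign-compatible isomorphisms $T[i]\otimes N\cong(T\otimes N)[i]$ and $\Hom(T[i],N)\cong\Hom(T,N)[-i]$. Independence of the chosen resolution is already part of the definition.

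\emph{Main obstacle.} The hard part is Step 1: showing that complete resolutions are functorial and send triangles to triangles. The difficulty is the uniqueness up to homotopy of the lifted map $T\to T'$. This uses that $\Hom_R(T,T'')$ is acyclic whenever $T''$ is a bounded-above complex of projectives, which we deduce from total acyclicity of $T$. The bound on the Gorenstein dimension of $\Cone$ is controlled because $\cG(R)$ is thick.
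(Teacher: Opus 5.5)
Your proposal is correct in outline, but it takes a different route from the paper. The paper builds nothing by hand. It cites \cite{Vel} and \cite{CJ} for functoriality of $\widehat{\Ext}$ and $\widehat{\Tor}$ at the level of complexes and for the long exact sequences. It then only checks that these functors factor through the derived categories, by observing that they vanish when either argument is acyclic (the cone of a quasi-isomorphism of bounded complexes is bounded and acyclic).

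You instead construct the complete-resolution functor $\cG(R)\to\mathsf{K_{tac}}(R)$ as an exact functor. This uses the lifting bijection $\Hom_{\mathsf K}(T,T')\cong\Hom_{\mathsf K}(T,P')$ induced by $\tau'\colon T'\to P'$, together with a cone construction for triangles. For instance, lift $P_L\to P_M$ to $T_L\to T_M$; the induced map $\Cone(T_L\to T_M)\to\Cone(P_L\to P_M)$ is triangular with diagonal entries that are isomorphisms in high degrees, hence a complete resolution of the cone. You then compose with $T\otimes_R-$ and $\Hom_R(T,-)$.

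Your route is self-contained. It makes invariance in the first variable and the long exact sequences for arbitrary exact triangles automatic, and it shows that total acyclicity is used only in the lifting lemma. The paper's route is much shorter but outsources exactly these points to the literature.

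Three things need tightening.

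First, acyclicity of $\Hom_R(T,C)$ holds for complexes $C$ of projectives with $C_i=0$ for $i\gg0$ in the paper's homological indexing. You reduce to bounded $C$ by brutal truncations and pass to the limit via Mittag-Leffler. This is what $\Cone(\tau')$ is up to homotopy. The claim fails for complexes bounded like projective resolutions (your $\mathsf{K}^{-}$): $\tau'$ itself is not null-homotopic unless $T'$ is contractible. So your two uses of ``bounded'' point in opposite directions and should be made consistent.

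Second, in Step 2, invariance under quasi-isomorphisms of bounded $N$ uses only that $T$ is degreewise projective. Each $T_i\otimes_R-$ and $\Hom_R(T_i,-)$ is exact, and boundedness of $N$ makes the double complex finite in each total degree. Total acyclicity plays no role here. Your parenthetical reduction to modules does not work as stated, since the pieces $N_j[j]$ are not acyclic and $T\otimes_R N_j$ need not be.

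Third, the resulting bifunctor lands in $\D(R)$, not $\mathsf{K}(R)$, because quasi-isomorphisms are not sent to homotopy equivalences. Composing with $\sH_n$ is unaffected.
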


\begin{proof}
By \cite[Propositions 4.6, 4.8]{Vel} and \cite[Propositions 2.8, 2.9]{CJ}, Tate cohomology and Tate homology are functorial on the corresponding categories of complexes and yield the required long exact sequences.
It remains to show that these functors factor through the corresponding derived categories.

For this, it suffices to show that these functors vanish whenever either variable is an acyclic complex, since the cone of a quasi-isomorphism between bounded complexes is bounded and acyclic.
The required vanishing therefore follows from \cite[Theorem 4.5]{Vel} for Tate cohomology and from \cite[Proposition 2.5 and Lemma 2.7]{CJ} for Tate homology.
Thus the Tate functors invert quasi-isomorphisms, and the long exact sequences descend to the stated derived categories.
\end{proof}

\begin{lem}\label{lemst}
Let $\cX \subseteq \Dfb(R)$ be a thick subcategory containing $\Pf(R)$, and let $H_* = (H_i)_{i \in \ZZ}$ be a homological functor from $\cX$ to an abelian category $\cA$ such that $H_*(P) \cong 0$ for all $P \in \Pf(R)$. 
Let $M \in \cX$ with $\qpd_R(M) < \infty$.
If $\inf(H_*(M)) > -\infty$ or $\sup(H_*(M)) < \infty$, then $H_*(M) \cong 0$.
Moreover, the same result is true for a cohomological functor.
\end{lem}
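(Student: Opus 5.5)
We apply the homological functor to a finite quasi-projective filtration by $M$ and then use \Cref{bound} to transfer the vanishing of $H_*$ on the perfect bottom term back to $M$.

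First, if $M \simeq 0$ there is nothing to prove. Otherwise, since $\qpd_R(M)<\infty$, choose a finite quasi-projective filtration $\FF=(F^{t+1}\to F^t\to\cdots\to F^{s+1}\to F^s)$ by $M$, with index interval $[s,t]$, where $F^s$ is a non-zero perfect complex. We need the filtration to live in $\cX$ for $H_*$ to be applicable. Because $M\in\cX$ and $\cX$ is thick, \Cref{thick} gives $F^i\in\cX$ for all $i$, so $\FF$ is a finite filtration of $F^s$ by $M$ inside the triangulated category $\cX$. (The filtration lies in $\Dfb(R)$ by the earlier proposition on filtrations in $\mod R$.) Since $F^s\in\Pf(R)$, the hypothesis gives $H_*(F^s)\cong0$, so $\inf(H_*(F^s))=+\infty$ and $\sup(H_*(F^s))=-\infty$.

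Now suppose, for contradiction, that $H_*(M)\not\cong0$.
\begin{itemize}
\item If $\inf(H_*(M))>-\infty$, then $\inf(H_*(M))$ is a finite integer, and \Cref{bound}(1)(i) gives $\inf(H_*(F^s))=\inf(H_*(M))+s<\infty$. This contradicts $H_*(F^s)\cong0$.
\item If $\sup(H_*(M))<\infty$, then $\sup(H_*(M))$ is finite, and \Cref{bound}(1)(ii) gives $\sup(H_*(F^s))=\sup(H_*(M))+t>-\infty$. This is again a contradiction.
\end{itemize}
Hence $H_*(M)\cong0$. The only care needed here is that \Cref{bound} is stated for $\cT$ and a homological functor on $\cT$; we apply it with $\cT=\cX$, which is legitimate because every term of $\FF$ lies in $\cX$. \Cref{bound} also covers the degenerate case $H_*(M)\cong 0$ separately, but that case is exactly the conclusion we want.

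For a cohomological functor $H^*$, set $H_n:=H^{-n}$. This is a homological functor on $\cX$ that still vanishes on $\Pf(R)$, and it interchanges the $\inf$ and $\sup$ hypotheses. Alternatively, use \Cref{bound}(2) directly. The main (mild) obstacle is simply checking that the filtration lies in $\cX$ and that $F^s$ is perfect, both of which are immediate from \Cref{thick} and the definition of a quasi-projective filtration.
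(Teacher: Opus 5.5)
Your proposal is correct and follows the same route as the paper. You apply \Cref{bound}(1) to a finite quasi-projective filtration by $M$ and use that $H_*$ vanishes on the perfect bottom term $F^s$, reducing the cohomological case via $H_n:=H^{-n}$. Your explicit check via \Cref{thick} that every $F^i$ lies in $\cX$, so that $H_*$ can be applied to the filtration, is a step the paper leaves implicit.
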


\begin{proof}
We only deal with the case where $\inf(H_*(M)) > -\infty$ because the argument is similar.
Let $\FF = (F^{t+1} \to F^t \to \cdots \to F^{s+1} \to F^s)$ be a finite quasi-projective filtration by $M$.
Note that as $F^s$ belongs to $\Pf(R)$, $H_*(F^s) \cong 0$, i.e., $\inf(H_*(F^s)) = \infty$.
From \Cref{bound}(1)(i), one has $\inf(H_*(M)) = \inf(H_*(F^s)) - s = \infty$.
Therefore, $H_*(M) \cong 0$.
\end{proof}

\begin{prop}\label{tatevan}
Let $M,N\in\Dfb(R)$.  
Assume that $M$ and $N$ have finite Gorenstein dimension, and that either $M$ or $N$ has finite quasi-projective dimension.
Then the following equivalences hold.
\begin{enumerate}[\rm(1)]
\item	
$\mathrm{(i)}\,\, \widehat{\Tor}_{\gg0}^R(M,N)=0
\Longleftrightarrow
\mathrm{(ii)}\,\,\widehat{\Tor}_{\ll0}^R(M,N)=0
\Longleftrightarrow
\mathrm{(iii)}\,\, \widehat{\Tor}_*^R(M,N)=0$

\item
$\mathrm{(i)}\,\, \widehat{\Ext}_R^{\gg0}(M,N)=0
\Longleftrightarrow
\mathrm{(ii)}\,\, \widehat{\Ext}_R^{\ll0}(M,N)=0
\Longleftrightarrow
\mathrm{(iii)}\,\, \widehat{\Ext}_R^*(M,N)=0$
\end{enumerate}
\end{prop}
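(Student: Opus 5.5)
The plan is to reduce everything to \Cref{lemst}, applied to the Tate functors of \Cref{tate-functor}. In each of (1) and (2), the implications (iii)$\Rightarrow$(i) and (iii)$\Rightarrow$(ii) are trivial. So it suffices to show that (i) or (ii) forces total vanishing. Condition (i) says that the homological functor in question has $\sup<\infty$, and (ii) says it has $\inf>-\infty$. For cohomological functors the roles of (i) and (ii) are swapped, but \Cref{lemst} covers both bounds, so this makes no difference.

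\emph{Case $\qpd_R(M)<\infty$.} Fix $N$ and consider the functors $\widehat{\Tor}^R_*(-,N)$ and $\widehat{\Ext}_R^*(-,N)$ on $\cX=\cG(R)$. By \Cref{tate-functor}(1),(3), the first is homological and the second is cohomological on $\cG(R)$. The category $\cG(R)$ is thick in $\Dfb(R)$ and contains $\Pf(R)$.

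Both functors vanish on perfect complexes. Indeed, for $P\in\Pf(R)$ we may take the zero complex as $T$ in a complete resolution: a bounded projective resolution of $P$ agrees with $0$ in high degrees, so $0\to P_{\mathrm{res}}\to P$ is a complete resolution. Hence $\widehat{\Tor}_*^R(P,N)=0=\widehat{\Ext}^*_R(P,N)$.

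Since $M\in\cG(R)$ has $\qpd_R(M)<\infty$, \Cref{lemst} applies directly and gives $H_*(M)=0$ whenever one of the bounds holds.

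\emph{Case $\qpd_R(N)<\infty$.} Now fix $M\in\cG(R)$ and use $\widehat{\Tor}^R_*(M,-)$ and $\widehat{\Ext}_R^*(M,-)$, which are homological and cohomological in the second variable by \Cref{tate-functor}(2),(4). Restrict them to $\cX=\cG(R)$, which contains $N$ by hypothesis.

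We must check that they vanish on perfect $P$. Let $T$ be a totally acyclic complex of finitely generated projectives. Then $T\otimes_R P$ is built from the complexes $T\otimes_R R[i]$, which are acyclic, so $T\otimes_R P$ is acyclic. Likewise $\Hom_R(T,P)$ is acyclic, because $\Hom_R(T,R)$ is acyclic by total acyclicity and $P$ lies in $\thick(R)$; here one uses finite generation of the $T_i$ to pass from $\Hom_R(T,R)$ to $\Hom_R(T,P)$ through finite extensions. Both vanishing statements follow from exactness in the second variable together with induction on the length of $P$, or from the thick-subcategory argument: the objects on which the functor vanishes form a thick subcategory containing $R$.

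With this in hand, \Cref{lemst} again yields total vanishing.

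The main obstacle is the second-variable vanishing on perfect complexes, and in particular verifying carefully that $\Hom_R(T,R)$ acyclic implies $\Hom_R(T,P)$ acyclic. I would handle this by the thick-subcategory argument just described, using the fact that \Cref{tate-functor}(2),(4) give long exact sequences in the second variable, so the class of objects with vanishing Tate functor is thick and contains $R$. The remaining steps are formal consequences of \Cref{lemst}.
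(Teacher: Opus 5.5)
Your proposal is correct and follows the paper's proof. Both reduce the nontrivial implications to \Cref{lemst}, applied to the Tate functors of \Cref{tate-functor} in the first or second variable according to which of $M,N$ has finite quasi-projective dimension. The only difference is that you prove vanishing on perfect complexes directly, where the paper cites \cite[Theorem 4.5]{Vel} and \cite[Proposition 2.5 and Lemma 2.7]{CJ}. In the first variable you use the zero complete resolution; in the second you use the thick-subcategory argument starting from the acyclicity of $T\otimes_R R$ and $\Hom_R(T,R)$.
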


\begin{proof}
The implications (iii) $\Rightarrow$ (i) and (iii) $\Rightarrow$ (ii) are trivial.
The opposite implications follow from \Cref{tate-functor} and \Cref{lemst} together with  \cite[Theorem 4.5]{Vel} and \cite[Proposition 2.5 and Lemma 2.7]{CJ}.
\end{proof}

We now apply Proposition~\ref{tatevan} to the symmetry of Ext vanishing over Gorenstein rings. 
The vanishing of Tate cohomology is symmetric in this setting, and Proposition~\ref{tatevan} allows us to pass between eventual vanishing of Ext and vanishing of Tate cohomology. 
This yields the following result.

\begin{thm}\label{symm}
Suppose that $R$ is Gorenstein.
Let $M,N\in\Dfb(R)$ and assume that either $M$ or $N$ has finite quasi-projective dimension.
Then $\Ext_R^{\gg0}(M,N)=0$ if and only if $\Ext_R^{\gg0}(N,M)=0$.
\end{thm}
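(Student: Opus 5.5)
Over a Gorenstein ring every object of $\Dfb(R)$ has finite Gorenstein dimension, so Tate cohomology $\widehat{\Ext}^*_R(M,N)$ and $\widehat{\Ext}^*_R(N,M)$ are both defined. Proposition~\ref{tatevan} applies in either order of the arguments, since the hypothesis only requires one of the two complexes to have finite quasi-projective dimension. The plan is to chain three equivalences:
\[
\Ext_R^{\gg0}(M,N)=0 \iff \widehat{\Ext}_R^*(M,N)=0 \iff \widehat{\Ext}_R^*(N,M)=0 \iff \Ext_R^{\gg0}(N,M)=0 .
\]

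\textbf{First step.} For $M\in\cG(R)$ and any $N\in\Dfb(R)$, there is the standard comparison $\widehat{\Ext}^n_R(M,N)\cong \Ext^n_R(M,N)$ for $n\gg0$. More precisely, for $n>\Gdim_R(M)-\hinf(N)$ the complete resolution $T\to P$ agrees with $P$ in high degrees; see \cite[Theorem 4.5]{Vel} or \cite[Proposition 2.5]{CJ}. Hence $\Ext^{\gg0}_R(M,N)=0$ if and only if $\widehat{\Ext}^{\gg0}_R(M,N)=0$. By Proposition~\ref{tatevan}(2), the latter holds if and only if $\widehat{\Ext}^*_R(M,N)=0$. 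The same argument with the roles of $M$ and $N$ swapped gives the third equivalence.

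\textbf{Middle step.} This is the symmetry of total Tate cohomology vanishing over a Gorenstein ring. I would argue as follows. Tate cohomology $\widehat{\Ext}^n_R(M,N)$ is $\Hom$ in the singularity category $\D_{\mathsf{sg}}(R)=\Dfb(R)/\Pf(R)$, which over Gorenstein $R$ is equivalent to the stable category of maximal Cohen--Macaulay modules. So $\widehat{\Ext}^*_R(M,N)=0$ means that $\Hom_{\D_{\mathsf{sg}}}(M,N[n])=0$ for all $n$.

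I would try to deduce the reverse vanishing in one of two ways.
\begin{itemize}
\item \emph{Via Tate homology.} Use the duality $\widehat{\Ext}^n_R(M,N)\cong \widehat{\Tor}^R_{-n-1}(\rHom_R(M,R),N)$ (valid since $\rHom_R(M,R)$ is in $\cG(R)$), together with the symmetry $\widehat{\Tor}^R_*(X,Y)\cong\widehat{\Tor}^R_*(Y,X)$ and dualizing again.
\item \emph{Via the thick subcategory.} Observe that $\{X\mid \widehat{\Ext}^*_R(X,N)=0\}$ is thick. Since $M$ (say) has finite quasi-projective dimension, $\thick(M)$ contains a perfect complex $F^s$, and the filtration lets one reduce statements about $M$ to statements about the graded pieces.
\end{itemize}

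\textbf{Expected obstacle.} The middle symmetry is the crux. For general complexes over Gorenstein rings, symmetry of Tate vanishing is not automatic; it is known for finite CI-dimension (cf.\ \cite[Theorem 4.1]{Jor}). I would attempt it by applying Lemma~\ref{lemst} to the cohomological functor $X\mapsto\widehat{\Ext}^*_R(N,X)$ on $\cG(R)=\Dfb(R)$. This functor kills perfect complexes, so it suffices to produce one-sided boundedness of $\widehat{\Ext}^*_R(N,M)$ from the vanishing of $\widehat{\Ext}^*_R(M,N)$. For this I would use Auslander--Reiten/Serre-type duality in the stable category: over a Gorenstein ring, for complexes that are locally perfect on the punctured spectrum, $\widehat{\Ext}^n(M,N)$ is Matlis dual to $\widehat{\Ext}^{d-1-n}(N,M)$. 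In general I would first localize and induct on the dimension of the non-free locus, using Proposition~\ref{proploc}(2) to keep finite quasi-projective dimension after localization.
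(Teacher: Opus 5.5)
The first and third equivalences in your chain are correct and match the paper's own reduction. That reduction is the comparison $\widehat{\Ext}^n_R\cong\Ext^n_R$ for $n\gg0$, together with \Cref{tatevan}(2), which does apply with the arguments in either order.

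\textbf{The gap.} The middle step, $\widehat{\Ext}^*_R(M,N)=0\iff\widehat{\Ext}^*_R(N,M)=0$, is never proved, and none of your proposed routes closes it.
\begin{itemize}
\item The Tate homology route is circular as sketched. With $(-)^*=\rHom_R(-,R)$ it gives $\widehat{\Ext}^*_R(M,N)=0\iff\widehat{\Tor}^R_*(M^*,N)=0\iff\widehat{\Tor}^R_*(N,M^*)=0\iff\widehat{\Ext}^*_R(N^*,M^*)=0$. But $\widehat{\Ext}^*_R(N^*,M^*)\cong\widehat{\Ext}^*_R(M,N)$, because $(-)^*$ is a duality on $\cG(R)$ preserving perfect complexes. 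So you return to your starting point, and $\widehat{\Ext}^*_R(N,M)$ never appears.
\item The thick-subcategory route reduces $M$ to the subquotients of its filtration. These are shifts of $M$ itself, and the perfect term $F^s$ is invisible to Tate cohomology, so nothing is gained.
\item Invoking \Cref{lemst} only replaces the goal by one-sided vanishing of $\widehat{\Ext}^*_R(N,M)$. By your own first step and \Cref{tatevan}, that is already equivalent to the conclusion.
\item The localization induction would at best show that the modules $\widehat{\Ext}^i_R(N,M)$ have finite length. That does not make $M$ or $N$ locally perfect on the punctured spectrum, so the Auslander--Reiten duality in the form you state it does not apply.
\end{itemize}

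\textbf{Where the hypothesis enters.} You have misplaced the role of finite quasi-projective dimension. Symmetry of \emph{total} vanishing of Tate cohomology holds over every Gorenstein local ring, with no condition on $\qpd$. Finite quasi-projective dimension is used only to upgrade eventual vanishing to total vanishing, i.e.\ in \Cref{tatevan}.

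The paper handles the middle step by reducing to modules. Stupid truncations of free resolutions give exact triangles $P_M\to M\to A[m]\to P_M[1]$ and $P_N\to N\to B[n]\to P_N[1]$, with $P_M,P_N$ perfect and $A,B$ finitely generated modules. Since Tate cohomology kills perfect complexes, $\widehat{\Ext}^i_R(M,N)\cong\widehat{\Ext}^{i-m+n}_R(A,B)$ and $\widehat{\Ext}^i_R(N,M)\cong\widehat{\Ext}^{i+m-n}_R(B,A)$. The symmetry for modules is then \cite[Corollary 3.5(ii)]{Sad}.

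\textbf{A way to rescue your duality idea.} It can be made to work directly, with no induction, by applying it to $N\ltensor_RK$, where $K$ is the Koszul complex on a system of parameters.
\begin{itemize}
\item $N\ltensor_RK$ lies in $\thick(N)$, so $\widehat{\Ext}^*_R(M,N)=0$ gives $\widehat{\Ext}^*_R(M,N\ltensor_RK)=0$.
\item $N\ltensor_RK$ has finite-length homology, hence is acyclic on the punctured spectrum. So the duality you quote applies and gives $\widehat{\Ext}^*_R(N\ltensor_RK,M)=0$.
\item By adjunction and self-duality of $K$, this says $\widehat{\Ext}^*_R(N,M\ltensor_RK)=0$ up to a shift.
\item Apply Nakayama's lemma to the finitely generated modules $\widehat{\Ext}^i_R(N,M)$, one Koszul element at a time, to conclude $\widehat{\Ext}^*_R(N,M)=0$.
\end{itemize}
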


\begin{proof}
It follows from \cite[(2.4.1) and (4.1.1)]{CJ} and \Cref{tatevan} that we only need to show $\widehat{\Ext}_R^{*}(M,N)=0$ if and only if $\widehat{\Ext}_R^{*}(N,M)=0$.

Taking stupid truncations of free resolutions of $M$ and $N$, we obtain exact triangles
\[
P_M\to M\to A[m]\to P_M[1],
\qquad
P_N\to N\to B[n]\to P_N[1],
\]
with $P_M,P_N \in \Pf(R)$ and $A,B \in \mod R$.
Then \cite[Theorem 4.5]{Vel} and \Cref{tate-functor} show that there are isomorphisms
\begin{align*}
\widehat{\Ext}_R^i(M,N) &\cong \widehat{\Ext}_R^{i+n}(M,B) \cong \widehat{\Ext}_R^{i-m+n}(A,B) \\
\widehat{\Ext}_R^i(N,M) &\cong \widehat{\Ext}_R^{i+m}(N,A) \cong \widehat{\Ext}_R^{i+m-n}(B,A).
\end{align*}
Thus, it suffices to show that $\widehat{\Ext}_R^{*}(A,B)=0$ if and only if $\widehat{\Ext}_R^{*}(B,A)=0$.
This is proved in \cite[Corollary 3.5(ii)]{Sad}.
\end{proof}

\begin{ac}
The author thanks Olgur Celikbas for helpful discussions. 
In particular, the idea for the proof of \Cref{qcx} arose from discussions with him.
\end{ac}


\begin{thebibliography}{99}

\bibitem{AB}
{\sc M. Auslander and M. Bridger},
Stable module theory,
{\em Mem. Amer. Math. Soc.} \textbf{94},
Amer. Math. Soc., Providence, R.I., 1969.

\bibitem{IFR}
{\sc L. L. Avramov},
Infinite free resolutions,
in {\em Six lectures on commutative algebra (Bellaterra, 1996)},
Progr. Math. 166,
Birkh\"auser, Basel, 1998, 1--118.

\bibitem{AGP}
{\sc L. L. Avramov, V. N. Gasharov and I. V. Peeva},
Complete intersection dimension,
{\em Inst. Hautes \`Etudes Sci. Publ. Math.} \textbf{86} (1997), 67--114.

\bibitem{AIL}
{\sc L. L. Avramov, S. B. Iyengar and J. Lipman},
Reflexivity and rigidity for complexes, I: Commutative rings,
{\em Algebra Number Theory} \textbf{4} (2010), no.~1, 47--86.

\bibitem{Ber1}
{\sc P. A. Bergh},
Modules with reducible complexity,
{\em J. Algebra} \textbf{310} (2007), 132--147.

\bibitem{Ber2}
{\sc P. A. Bergh},
Modules with reducible complexity, II,
{\em Comm. Algebra} \textbf{37} (2009), 1908--1920.

\bibitem{CP}
{\sc O. Celikbas and G. Piepmeyer},
Syzygies and tensor product of modules,
{\em Math. Z.} \textbf{276} (2014), no.~1--2, 457--468.

\bibitem{CHY}
{\sc H. Chen, J. Hu and X. Yang},
Quasi-projective dimensions of complexes over rings,
preprint (2026), \href{https://arxiv.org/abs/2604.09279}{arXiv:2604.09279v1}.

\bibitem{Gbook}
{\sc L. W. Christensen},
{\em Gorenstein dimensions},
Lecture Notes in Mathematics 1747,
Springer-Verlag, Berlin, 2000.

\bibitem{CFH}
{\sc L. W. Christensen, H. B. Foxby and H. Holm},
{\em Derived category methods in commutative algebra},
Springer Monographs in Mathematics, Springer, Cham, 2025.

\bibitem{CJ}
{\sc L. W. Christensen and D. A. Jorgensen},
Tate (co)homology via pinched complexes,
{\em Trans. Amer. Math. Soc.} \textbf{366} (2014), no.~2, 667--689.

\bibitem{CJ2}
{\sc L. W. Christensen and D. A. Jorgensen},
Vanishing of Tate homology and depth formulas over local rings,
{\em J. Pure Appl. Algebra} \textbf{219} (2015), 464--481.

\bibitem{DGI}
{\sc W. G. Dwyer, J. P. C. Greenlees and S. B. Iyengar},
Finiteness in derived categories of local rings,
{\em Comment. Math. Helv.} \textbf{81} (2006), no.~2, 383--432.

\bibitem{FI}
{\sc H.-B. Foxby and S. Iyengar},
Depth and amplitude for unbounded complexes,
in {\em Commutative algebra (Grenoble/Lyon, 2001)},
Contemp. Math. 331, Amer. Math. Soc., Providence, RI, 2003, 119--137.

\bibitem{FL}
{\sc L. Ferraro and J. Lyle},
The derived depth formula for modules of finite quasi-projective dimension,
{\em Bull. Lond. Math. Soc.} \textbf{58} (2026), no.~9, e70491.

\bibitem{GJT}
{\sc M. Gheibi, D. A. Jorgensen and R. Takahashi},
Quasi-projective dimension,
{\em Pacific J. Math.} \textbf{312} (2021), no.~1, 113--147.

\bibitem{Iye}
{\sc S. B. Iyengar},
Depth for complexes, and intersection theorems,
{\em Math. Z.} \textbf{320} (1999), 545--567.

\bibitem{Jor}
{\sc P. J{\o}rgensen},
Symmetry theorems for Ext vanishing,
{\em J. Algebra} \textbf{301} (2006), no.~1, 224--239.

\bibitem{JPMMR}
{\sc V. H. Jorge-P\'erez, P. Martins and V. D. Mendoza-Rubio},
Remarks on Auslander's depth formula for quasi-projective dimension,
{\em Rev. Real Acad. Cienc. Exactas Fis. Nat. Ser. A-Mat.} 
\textbf{120} (2026), 86.

\bibitem{Ker}
{\sc B. Keller},
Deriving DG categories,
{\em Ann. Sci. Ecole Norm. Sup. (4)} \textbf{27} (1994), no.~1, 63--102.

\bibitem{Pol}
{\sc J. Pollitz},
The derived category of a locally complete intersection ring,
{\em Adv. Math.} \textbf{354} (2019), 106752.

\bibitem{Sad}
{\sc A. Sadeghi},
Symmetry in vanishing of Tate cohomology over Gorenstein rings, 
preprint (2017), \href{https://arxiv.org/abs/1608.04588}{arXiv:1608.04588v3}. 

\bibitem{SW}
{\sc S. Sather-Wagstaff},
Complete intersection dimensions for complexes,
{\em J. Pure Appl. Algebra} \textbf{190} (2004), no.~1--3, 267--290.


\bibitem{SY}
{\sc T. Sharif and S. Yassemi},
Special homological dimensions and intersection theorem,
preprint (2004), \href{https://arxiv.org/abs/math/0404182}{arXiv:math/0404182v1}.

\bibitem{Vel}
{\sc O. Veliche},
Gorenstein projective dimension for complexes,
{\em Trans. Amer. Math. Soc.} \textbf{358} (2006), no.~3, 1257--1283.

\bibitem{Wei}
{\sc C. A. Weibel},
{\em An introduction to homological algebra},
Cambridge Studies in Advanced Mathematics 38, Cambridge University Press, Cambridge, 1994.

\end{thebibliography}
\end{document}